\definecolor{BrickRed}{rgb}{1,0,0}
\definecolor{Black}{cmyk}{0,0,0,1}
\newtheorem{theorem}{Theorem}[section]
\newtheorem{proposition}[theorem]{Proposition}
\newtheorem*{proposition*}{Proposition}
\newtheorem{corollary}[theorem]{Corollary}
\newtheorem*{lemma*}{Lemma}
\newtheorem{alphatheorem}{Theorem}
\newtheorem{alphaconjecture}{Conjecture}
\newtheorem{alphaquestion}{Question}
\theoremstyle{definition}
\newtheorem{definition}[theorem]{Definition}
\newtheorem*{observation*}{Observation}
\newtheorem*{claim*}{Claim}
\newtheorem{example}[theorem]{Example}
\newtheorem{remark}[theorem]{Remark}
\newtheorem*{remark*}{Remark}
\newtheorem*{conjecture*}{Conjecture}
\newtheorem*{convention*}{Convention}
\newtheorem{question}{Question}
\theoremstyle{plain}
\newtheorem{lemma}[theorem]{Lemma}
\newcommand{\bra}[1]{ \left( #1 \right) }
\newcommand{\abs}[1]{\left|#1\right|}
\newcommand{\fp}[1]{\left\{ #1 \right\}}
\newcommand{\br}[1]{\lbr #1 \rbr}
\newcommand{\ifbra}[1]{\left\llbracket #1 \right\rrbracket}
\newcommand{\lbr}{\langle\!\langle}
\newcommand{\rbr}{\rangle\!\rangle}
\newcommand{\fpa}[1]{\left\lVert #1 \right\rVert}
\newcommand{\e}{\varepsilon}
\renewcommand{\a}{\alpha}
\newcommand{\NN}{\mathbb{N}}
\newcommand{\QQ}{\mathbb{Q}}
\newcommand{\Q}{\QQ}
\newcommand{\ZZ}{\mathbb{Z}}
\newcommand{\RR}{\mathbb{R}}
\newcommand{\R}{\RR}
\newcommand{\N}{\NN}
\newcommand{\Z}{\ZZ}
\newcommand{\CC}{\mathbb{C}}
\newcommand{\TT}{\mathbb{T}}
\newcommand{\cA}{\mathcal{A}}
\newcommand{\cB}{\mathcal{B}}
\newcommand{\Nker}{\mathcal{N}}
\newcommand{\Rrev}{\mathrm{R}}
\newcommand{\floor}[1]{\left\lfloor #1 \right\rfloor}
\newcommand{\set}[2]{{\left\{ #1 \ \middle| \ #2 \right\}} }
\newcommand{\FS}{\operatorname{FS}}
\newcommand{\p}{p}
\newcommand{\IP}{$\operatorname{IP}$}
\newcommand{\IPS}{$\mathrm{IPS}$}
\newcommand{\comment}[1]{}
\newcommand{\inter}[1]{\operatorname{int}#1}
\newcommand{\cl}[1]{\operatorname{cl}#1}
\newcommand{\GP}{\mathrm{GP}}
\newcommand{\Haland}{H\r{a}land}
\begin{document}

\subjclass[2010]{ Primary: 11B85, 37A45. Secondary: 37B05, 37B10, 11J71, 11B37, 05C20}
\keywords{Generalised polynomials, automatic sequences, IP sets, nilmanifolds, linear recurrence sequences, regular sequences}

\author[J. Byszewski \and J. Konieczny]{Jakub Byszewski \and Jakub Konieczny
}

\address[JB]{Department of Mathematics and Computer Science\\Institute of Mathematics\\
Jagiellonian University\\
ul. prof. Stanis\l{}awa \L{}ojasiewicza 6\\
30-348 Krak\'{o}w}
\email{jakub.byszewski@gmail.com}

\address[JK]{Mathematical Institute \\ 
University of Oxford\\
Andrew Wiles Building \\
Radcliffe Observatory Quarter\\
Woodstock Road\\
Oxford\\
OX2 6GG}

\address[JK, current address]{Einstein Institute of Mathematics\\ Edmond J. Safra Campus\\ The Hebrew University of Jerusalem\\ Givat Ram\\ Jerusalem, 9190401\\ Israel}
\email{jakub.konieczny@gmail.com}

\title{Automatic sequences and generalised polynomials}

\maketitle 

\begin{abstract}
	We conjecture that bounded generalised polynomial functions cannot be generated by finite automata, except for the trivial case when they are ultimately periodic.
	
	 Using methods from ergodic theory, we are able to partially resolve this conjecture, proving that any hypothetical counterexample is periodic away from a very sparse and structured set. 
	 In particular, we show that for a polynomial $p(n)$ with at least one irrational coefficient (except for the constant one) and integer $m\geq 2$, the sequence $\floor{p(n)} \bmod{m}$ is never automatic.  
	 
	 We also prove that the conjecture is equivalent to the claim that the set of powers of an integer $k\geq 2$ is not given by a generalised polynomial.
\end{abstract}

\tableofcontents

\section*{Introduction}\label{sec:INT}

\newcommand{\periodic}{b}
Automatic sequences are sequences whose $n$-th term is produced by a finite-state machine from the base-$k$ digits of $n$. (A precise definition is given below.) By definition, automatic sequences can take only finitely many values. Allouche and Shallit \cite{Allouche-Shallit-1992,AlloucheShallit-2003} have generalised the notion of automatic sequences to a wider class of regular sequences and demonstrated its ubiquity and links with multiple branches of mathematics and computer science. The problem of demonstrating that a certain sequence is or is not automatic or regular has been widely studied, particularly for sequences of arithmetic origin (see, e.g., \cite{Allouche-Shallit-1992, AlloucheShallit-2003, Bell2007, ShuYao2011, MedinaRowland2015, Schlage-Puchta2011, Moshe2008, Rowland2010}).

The aim of this article is to continue this study for sequences that arise from generalised polynomials, i.e., expressions involving algebraic operations and the floor function. Our methods rely on a number of dynamical and ergodic tools. A crucial ingredient in our work is one of the main results from the companion paper  \cite{ByszewskiKonieczny2016} concerning the combinatorial structure of the set of times at which an orbit on a nilmanifold hits a semialgebraic subset. This is possible because by the work of Bergelson and Leibman \cite{BergelsonLeibman2007} generalised polynomials are closely related to dynamics on nilmanifolds.

In \cite[Theorem 6.2]{AlloucheShallit-2003} it is proved that the sequence $(f(n))_{n \geq 0}$ given by $f(n)= \lfloor \alpha n + \beta\rfloor$ for real numbers $\alpha, \beta$ is regular if and only if $\alpha$ is rational. The method used there does not immediately generalise to higher degree polynomials in $n$, but the proof implicitly uses rotation on a circle by an angle of $2\pi \alpha$. Replacing the rotation on a circle by a skew product transformation on a torus (as in Furstenberg's proof of Weyl's equidistribution theorem \cite{Furstenberg61}), we easily obtain the following result. (For more on regular sequences, see Section \ref{sec:DEF}.)

\begin{alphatheorem}\label{thm:main-sortof} Let $p\in \R[x]$ be a polynomial. Then the sequence $f(n)=\lfloor p(n) \rfloor, n\geq 0$, is regular if and only if all the coefficients of $p$ except possibly for the constant term are rational.
\end{alphatheorem}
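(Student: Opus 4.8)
The plan is to establish the two implications separately: the ``if'' direction is a routine manipulation, while the ``only if'' direction combines the $k$-kernel description of regular sequences with Weyl equidistribution. For the ``if'' direction, write $p(n)=\sum_{j=0}^{d}a_{j}n^{j}$ with $a_{1},\dots,a_{d}\in\QQ$ and let $Q\geq 1$ clear the denominators of $a_{1},\dots,a_{d}$. Since $(n+Q)^{j}\equiv n^{j}\pmod Q$, the sequence $n\mapsto q(n)\bmod 1$ with $q(n):=\sum_{j\geq 1}a_{j}n^{j}$ is purely periodic of period dividing $Q$; writing $\theta_{e}\in[0,1)$ for its value on the class $n\equiv e\pmod Q$, one has $\floor{p(n)}=(q(n)-\theta_{e})+\floor{\theta_{e}+a_{0}}$ there, where $q(n)-\theta_{e}$ is integer-valued. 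Substituting $n=Qm+e$ presents $\floor{p(n)}$ on each residue class modulo $Q$ as an integer-valued polynomial sequence, hence as a regular sequence; and a sequence agreeing with a regular sequence on each class of a fixed modulus is regular (Section~\ref{sec:DEF}).

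For the converse, fix $k\geq 2$, assume $f(n)=\floor{p(n)}$ is $k$-regular, and suppose towards a contradiction that $a_{s}\notin\QQ$ for some $1\leq s\leq d$. Regularity of $f$ means the real vector space $V$ spanned by the $k$-kernel $\{(f(k^{i}n+j))_{n}: i\geq 0,\ 0\leq j<k^{i}\}$ is finite-dimensional. Let $W$ be the $(d+1)$-dimensional space of sequences given by a real polynomial in $n$ of degree $\leq d$. For each $i$ the sequence $n\mapsto p(k^{i}n)$ lies in $W$ and $n\mapsto f(k^{i}n)=f(k^{i}n+0)$ lies in $V$, so every sequence $n\mapsto\{p(k^{i}n)\}=p(k^{i}n)-f(k^{i}n)$ lies in the finite-dimensional space $U:=V+W$. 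It thus suffices to prove that the sequences $(\{p(k^{i}n)\})_{n}$, $i\in\NN$, are linearly independent.

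Put $\vec v(n)=(a_{1}n,a_{2}n^{2},\dots,a_{d}n^{d})\in\TT^{d}$ and let $\phi_{i}\colon\TT^{d}\to\TT$ be the character $\vec x\mapsto\sum_{j=1}^{d}k^{ij}x_{j}$, so that $\{p(k^{i}n)\}=g_{i}(\vec v(n))$ where $g_{i}(\vec x)=\iota\bigl(\phi_{i}(\vec x)+a_{0}\bigr)$ and $\iota\colon\TT\to[0,1)$ is the canonical section; each $g_{i}$ is affine away from one codimension-one subtorus coset. By Weyl's equidistribution theorem for polynomial sequences --- proved, in the form relevant here, via a skew product on $\TT^{d}$ in the style of Furstenberg --- the orbit $(\vec v(n))_{n}$ equidistributes with respect to Haar measure on its closure $Y$, a finite union of cosets of a subtorus $Y_{0}$. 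Since $a_{s}\notin\QQ$, every $\xi\in\ZZ^{d}$ for which $\sum_{j}\xi_{j}a_{j}n^{j}$ is periodic modulo $1$ has $\xi_{s}=0$; hence $Y_{0}$ contains the coordinate circle $Z_{s}:=\{\vec x\in\TT^{d}: x_{j}=0\text{ for all }j\neq s\}$, and $Y_{0}\subseteq Y$ because the orbit passes through $\vec v(0)=\vec 0$. Now suppose $\sum_{i\in I}c_{i}\{p(k^{i}n)\}=0$ for all $n$ with $I$ finite, all $c_{i}\neq 0$, and $m=\max I$; then $G:=\sum_{i\in I}c_{i}g_{i}$ vanishes on the set $\{\vec v(n)\}$, which is dense in $Y$. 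Because $\phi_{i}|_{Z_{s}}$ is the non-constant map $t\mapsto k^{si}t$, the discontinuity locus of $G$ is a proper, Haar-null subset of $Y$ and meets $Z_{s}$ in a finite set, so $G$ vanishes on all but finitely many points of $Z_{s}$. But $G|_{Z_{s}}(t)=\sum_{i\in I}c_{i}\{k^{si}t+a_{0}\}$, and this piecewise-affine function has a jump of size $-c_{m}\neq 0$ at the least $t_{0}\in(0,1)$ with $k^{sm}t_{0}+a_{0}\in\ZZ$, a point at which every summand with $i<m$ is continuous (its first discontinuity in $(0,1)$ occurs at some $t>t_{0}$, since $k^{si}<k^{sm}$). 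This contradicts $G|_{Z_{s}}$ vanishing off a finite set, so all $c_{i}=0$; thus the $(\{p(k^{i}n)\})_{n}$ are linearly independent, contradicting that they lie in the finite-dimensional space $U$. Hence $a_{s}\notin\QQ$ is impossible, i.e. every non-constant coefficient of $p$ is rational.

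The crux is the last paragraph: one must extract from the single assumption ``$a_{s}$ irrational'' the fact that the orbit closure $Y$ contains a coordinate circle on which none of the characters $\phi_{i}$ degenerates, so that the elementary ``isolate the jump coming from the top term'' computation applies. By contrast the $k$-kernel reduction is formal and the equidistribution input is classical; the genuine care lies in the transversality between $Y$ and the discontinuity loci of the floor functions.
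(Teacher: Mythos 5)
Your proof is correct, but the ``only if'' direction takes a genuinely different route from the paper's. The paper does not work with the linear span of the $k$-kernel at all: it reduces modulo $m$ (a regular sequence with finitely many values is automatic, Theorem \ref{automthm2}), extracts from finiteness of the kernel only the soft property of \emph{weak periodicity} (Lemma \ref{lem:auto=>weak-per}), and then gets a contradiction by topological dynamics --- representing $\floor{p(n)}\bmod m$ via the same Furstenberg skew product on $\TT^d$ and showing that weak periodicity would force $T^{r'-r}(\cl A)\subseteq \cl A$ for a set $A$ that is neither empty nor dense, which is incompatible with total minimality (Proposition \ref{prop:poly=>not-wp}). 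You instead use the full strength of regularity (finite dimensionality of $V$) and prove linear independence of the sequences $\bra{\fp{p(k^i n)}}_n$ by Weyl equidistribution of the polynomial orbit together with an ``isolate the jump of the top term'' computation on a coordinate circle inside the orbit closure; your verification that $Y_0\supseteq Z_s$ when $a_s\notin\QQ$ and that the least discontinuity of the $i=m$ summand precedes those of all lower terms is sound. What each buys: the paper's route gives the stronger conclusion that $\floor{p(n)}\bmod m$ is non-automatic for every $m\geq 2$, and the weak-periodicity formalism is exactly what generalises (via Bergelson--Leibman) to Theorem \ref{thm:main-weakly-periodic}; your route is more self-contained --- linear algebra on the kernel plus classical equidistribution, no minimality argument --- but is tailored to the regularity statement itself. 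Two minor points: the assertion that the discontinuity locus of $G$ is Haar-null in $Y$ is never used (only its finiteness on $Z_s$ matters, and that you do establish), and the closure of regular sequences under merging along residue classes, which your ``if'' direction invokes, is standard but not stated in Section 1 --- the paper's ``if'' argument sidesteps it by writing $f=\tfrac1h(f_1-f_2)$ globally.
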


In fact, we show the stronger property that for any integer $m\geq 2$ the sequence $\floor{ f(n) } \bmod m$ is not automatic unless all the coefficients of $p$ except for the constant term are rational, in which case the sequence is periodic. It is natural to inquire whether a similar result can be proven for more complicated expressions involving the floor function such as, e.g., $f(n)= \lfloor \alpha \lfloor \beta n^2 + \gamma\rfloor^2 + \delta n +\varepsilon  \rfloor$. Such sequences are called generalised polynomial  and have been intensely studied (see, e.g., \cite{Haland-1993, Haland-1994, HalandKnuth-1995, BergelsonLeibman2007, Leibman-2012, GreenTaoZiegler-2012, GreenTao-2012}).

Another closely related motivating example comes from the classical Fibonacci word\footnote{We will freely identify words in $\Omega^{\NN_0}$ with functions $\NN_0 \to \Omega$.} $w_{\mathrm{Fib}} \in \{0,1\}^{\NN_0}$, whose systematic study was initiated by Berstel \cite{Berstel-1980,Berstel-1986} (for historical notes, see \cite[Sec.\ 7.12]{AlloucheShallit-book}). There are several ways to define it, each shedding light from a different direction.

\begin{enumerate}
\item\label{item:Fib-A} \textit{Morphic word.} Define the sequence of words $w_0 := 0$, $w_1 := 01$, and $w_{i+2} := w_{i+1} w_{i}$ for $i\geq 0$. Then $w_{\mathrm{Fib}}$ is the (coordinate-wise) limit of $w_i$ as $i \to \infty$.
\item\label{item:Fib-B} \textit{Sturmian word.} Explicitly, $w_{\mathrm{Fib}}(n) = \floor{(2- \varphi)(n+2) }- {\floor{(2- \varphi)(n+1) }}$.
\item\label{item:Fib-C} \textit{Fib-automatic sequence.} If a positive integer $n$ is written in the form $n = \sum_{i =2}^d v_i F_i$, where $v_i \in \{0,1\}$  and there is no $i$ with $v_i = v_{i+1} = 1$, then $w_{\mathrm{Fib}}(n) = v_2$.
\end{enumerate}
The equivalence of \eqref{item:Fib-A} and \eqref{item:Fib-B} is well-known, see, e.g., \cite[Chpt.\ 2]{Lothaire-book}. The representation $v_d v_{d-1}\cdots v_2$ of $n$ as a sum of Fibonacci numbers in \eqref{item:Fib-C} is known as the Zeckendorf representation; it exists for each $n$ and is unique. The notion of automaticity using Zeckendorf representation (or, for that matter, a representation from a much wider class) in place of the usual base-$k$ representation of the input $n$ was introduced and studied by Shallit in \cite{Shallit-1988} (see also \cite{Rigo-2000}), where among other things the equivalence of \eqref{item:Fib-A} and \eqref{item:Fib-C} is shown. We return to this subject in Section \ref{sec:Concluding}. 

Hence, $w_{\mathrm{Fib}}$ gives a non-trivial example of a sequence which is given by a generalised polynomial and satisfies a variant of automaticity related to the Zeckendorf representation. It is natural to ask if similar examples exist for the usual notion of $k$-automaticity. Motivated by Theorem \ref{thm:main-sortof}, we believe the answer is essentially negative, except for trivial examples. We say that a sequence $f$ is \emph{ultimately periodic} if it coincides with a periodic sequence except on a finite set. The following conjecture was the initial motivation for the line of research pursued in this paper. 

\begin{alphaconjecture}
\label{conjecture:main}
	Suppose that a sequence $f$ is simultaneously automatic and generalised polynomial. Then $f$ is ultimately periodic.
\end{alphaconjecture}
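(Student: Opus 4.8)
The plan is to pit the two kinds of rigidity enjoyed by the two classes of sequences against each other: the combinatorial self-similarity of an automatic sequence under the scaling $n\mapsto kn$ (equivalently, the finiteness of its $k$-kernel) against the dynamical description of a bounded generalised polynomial as a piecewise-polynomial function of a polynomial orbit on a nilmanifold. Suppose, for contradiction, that $f$ is simultaneously $k$-automatic and generalised polynomial but not ultimately periodic. Since $f$ takes only finitely many values, some level set $E=\set{n\in\NN}{f(n)=c}$ is then $k$-automatic, has generalised polynomial indicator function, and is not ultimately periodic; it suffices to show that no single subset of $\NN$ can have all three properties.

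For the generalised polynomial side I would invoke the Bergelson--Leibman correspondence to write $\mathbf{1}_E(n)=F(g^n\Gamma)$, where $(G/\Gamma,\,x\mapsto gx)$ is a nilsystem, which we may take minimal by passing to the orbit closure of $\Gamma$, and $F\colon G/\Gamma\to\{0,1\}$ is a function whose set of discontinuities lies in a semialgebraic set $\partial S$ (in a fixed \Malcev{} coordinate system) of zero Haar measure; set $S=\set{x}{F(x)=1}$. By unique ergodicity the orbit $(g^n\Gamma)_n$ spends a density-zero proportion of time near $\partial S$, so $E$ has a density and, off a sparse set, membership $n\in E$ is decided by whether $g^n\Gamma$ lies in the interior of $S$ or outside its closure. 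The combinatorial structure of such return-time sets is precisely the subject of the companion paper \cite{ByszewskiKonieczny2016}: its main theorem shows that such a set, or its complement, contains a translate of an \IP{} set $\FS\bra{\set{a_i}{i\in\NN}}$ whose generators $a_i$ follow the recurrence times of the (necessarily present, once the nilsystem is non-trivial) irrational rotation factor of $g$.

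The crux is then a rigidity lemma for automatic sequences: a $k$-automatic subset of $\NN$ that is compatible with this continuous, irrational structure — say, one containing an \IP{} set all of whose generators lie in a fixed Bohr set $\set{n}{\norm{n\vartheta}<\e}$ with $\vartheta$ irrational — must be ultimately periodic. In spirit this is a Cobham-type statement: the ``base-$k$'' finite-state structure of $E$, witnessed by the finiteness of the $k$-kernel and a pumping argument, cannot coexist with the ``base-$\vartheta$'' structure of an irrational rotation, just as $k$-automaticity and $\ell$-automaticity are incompatible for multiplicatively independent $k,\ell$. Feeding the \IP{}/recurrence information from the previous step into the pumping argument should then pin down an eventual period of $E$ on the complement of the sparse exceptional set.

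The step I expect to be the genuine obstacle is controlling $E$ \emph{on} that sparse set, i.e.\ on the preimage under $n\mapsto g^n\Gamma$ of the boundary $\partial S$. This preimage is itself a very thin generalised polynomial set, and it is exactly the sort of set — compare the Zeckendorf description \eqref{item:Fib-C} of $w_{\mathrm{Fib}}$, or the set $\set{k^n}{n\in\NN}$ of powers of $k$ — on which automaticity and the generalised polynomial property are not visibly in conflict. I would therefore expect the program above to yield only the partial statement that a hypothetical counterexample $f$ is ultimately periodic \emph{away from} such a sparse, highly structured set, and to reduce Conjecture \ref{conjecture:main} to the single assertion that $\set{k^n}{n\in\NN}$ is not a generalised polynomial set. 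Proving that last assertion — presumably by showing that the geometric gaps $k^{n+1}-k^n$ cannot be matched by any nilorbit meeting a semialgebraic set along so sparse a sequence — is where I expect genuinely new input to be required.
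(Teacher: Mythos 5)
The statement you were asked to prove is Conjecture~\ref{conjecture:main}, which the paper itself does \emph{not} prove: it remains open, and the paper only establishes the partial results (Theorems~\ref{thm:main-weakly-periodic} and~\ref{thm:main-optimized}) together with the reduction (Theorem~\ref{thm:main-dichotomy}) to the assertion that $\set{k^i}{i\geq 0}$ is not a generalised polynomial set. Your proposal honestly arrives at exactly this conclusion --- that the program yields only ``periodic away from a sparse structured set'' plus the reduction to the powers-of-$k$ question --- so it is not a proof of the conjecture, and you correctly say so. In that sense your assessment of where the genuine obstacle lies coincides precisely with the paper's.

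Two of your intermediate steps, however, are stated in a form the paper does not establish and that I would flag as gaps even within the partial program. First, you attribute to the companion paper the claim that the return-time set of a nilorbit in a semialgebraic set (or its complement) \emph{contains} a translate of an \IP{} set with generators tied to an irrational rotation factor. The companion paper's result used here (Theorem~\ref{thm:GP-vs-IPS}) runs in the opposite direction: a \emph{sparse} generalised polynomial set is \emph{not} \IPS. The \IP{}-type structure is extracted from the automatic side (Theorem~\ref{thm:Structure-Auto}: a non-arid automatic set is \IPS, via the pumping/kernel structure), and the contradiction is obtained by pitting that against the absence of \IPS{} structure in sparse generalised polynomial sets. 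Second, your proposed ``rigidity lemma'' --- that a $k$-automatic set containing an \IP{} set whose generators lie in a Bohr set of an irrational $\vartheta$ must be ultimately periodic --- is not proved in the paper and is not needed for its partial results; the density-one statement is instead obtained from weak periodicity (finiteness of the $k$-kernel) combined with total minimality of the nilsystem (Lemma~\ref{lem:tot-min=>not-auto}), with no Cobham-type base-$k$ versus base-$\vartheta$ argument. As stated, your lemma is speculative and plausibly as hard as the conjecture itself, so the argument as proposed does not close even the steps it needs before reaching the acknowledged final obstacle.
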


In this paper, we prove several slightly weaker variants of Conjecture \ref{conjecture:main}. First of all, we prove that the conjecture holds except on a set of density zero. In fact, in order to obtain such a result, we only need a specific property of automatic sequences. For the purpose of stating the next theorem, let us say that a sequence $f \colon \NN \to X$ is \emph{weakly periodic} if for any restriction $f'$ of $f$ to an arithmetic sequence given by $f'(n) = f(a n + b)$, $a \in \NN,\ b \in \NN_0$, there exist $q \in \NN$, $r, r' \in \NN_0$ with $r \neq r'$, such that $f'(q n + r) = f'(q n + r')$. Of course, any periodic sequence is weakly periodic, but not conversely. All automatic sequences are weakly periodic (this follows from the fact that automatic sequences have finite kernels, see Lemma \ref{lem:auto=>weak-per}). Another non-trivial example of a weakly periodic sequence is the characteristic function of the square-free numbers.

\begin{alphatheorem}\label{thm:main-weakly-periodic}
	Suppose that a sequence $f \colon \NN_0 \to \RR$ is weakly periodic and generalised polynomial. Then there exists a periodic function $b \colon \NN_0 \to \RR$ and a set $Z \subset \NN_0$ of upper Banach density zero such that $f(n) = \periodic(n)$ for $n \in \NN_0 \setminus Z$. 
\end{alphatheorem}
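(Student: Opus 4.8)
The plan is to combine the Bergelson--Leibman description of bounded generalised polynomials in terms of nilsystems with a well-distribution argument, and then use weak periodicity to force the ``connected part'' of the underlying nilsystem to collapse. First I would record the bookkeeping that all three of the relevant properties --- being a generalised polynomial, being weakly periodic, and the conclusion of the theorem --- are compatible with passing to and from a finite partition of $\NN_0$ into arithmetic progressions $n \mapsto an+b$ (for the conclusion one uses that affine images and finite unions of sets of upper Banach density zero again have upper Banach density zero, and that a function which is periodic on each residue class of a fixed modulus is periodic). Thus I may freely replace $f$ by any restriction $f(an+b)$. By \cite{BergelsonLeibman2007} I then write $f(n) = F(g(n)\Gamma)$ for a nilmanifold $X = G/\Gamma$, a polynomial sequence $g\colon \ZZ \to G$, and a bounded function $F$ whose set of discontinuities is contained in a null semialgebraic set $D \subseteq X$; passing to a suitable progression I arrange that the orbit closure of $(g(n)\Gamma)$ is a single connected nilmanifold, still called $X$, on which $(g(n)\Gamma)$ is totally equidistributed.

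The quantitative input I would use is that a totally equidistributed polynomial orbit on a nilmanifold is in fact \emph{well-distributed}, i.e.\ equidistributed uniformly over windows $[M, M+N)$; this, together with the structural description from the companion paper \cite{ByszewskiKonieczny2016} of the set of times an orbit meets a semialgebraic set, is what is genuinely needed here. It follows that for \emph{any} null semialgebraic $S \subseteq X$ the return-time set $\{ n : g(n)\Gamma \in S\}$ has upper Banach density zero, since it is contained in $\{ n : g(n)\Gamma \in \overline{S}\}$ and $\mu(\overline{S}) = \mu(S) = 0$. Applied to $S = D$ this already isolates a set $Z_0$ of upper Banach density zero off which $F$ is continuous at $g(n)\Gamma$, so that $f(n)$ is determined by the germ of $F$ there.

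Next I would run a dimension descent driven by weak periodicity. If $\dim X = 0$ then $X$ is a point and $f$ is constant on the progression, so we are done. If $\dim X > 0$, weak periodicity yields $Q \in \NN$ and $c \neq c'$ with $f(Qn+c) = f(Qn+c')$ for all $n$. The sequence $n \mapsto (g(Qn+c)\Gamma, g(Qn+c')\Gamma)$ is polynomial, hence equidistributes on a subnilmanifold $W \subseteq X \times X$ whose two coordinate projections are surjective and push Haar to Haar; together with $\mu(D) = 0$ this gives $F(x) = F(y)$ for $\mu_W$-a.e.\ $(x,y) \in W$. If $W = X \times X$ then $F$ is $\mu_X$-a.e.\ constant, and $f$ agrees with that constant off $\{ n : g(n)\Gamma \in \{ F \neq \text{a.e.\ value}\}\}$, a set of upper Banach density zero by the previous paragraph. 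If $W \subsetneq X \times X$, the structure theory of polynomial orbit closures in products --- again combined with the input of \cite{ByszewskiKonieczny2016} --- exhibits a nontrivial common factor $X \to X/X_0$ with $X_0$ a nontrivial connected subgroup, along which the two orbits coincide; unwinding, $F$ is $\mu_X$-a.e.\ invariant under $X_0$, so $f$ is represented on the nilmanifold $X/X_0$ of strictly smaller dimension, with discontinuity set the (null, semialgebraic) image of $D$. Iterating reaches dimension $0$ in finitely many steps; the finitely many exceptional sets accumulated en route are each of the form $\{ n : g(n)\Gamma \in (\text{null semialgebraic})\}$, hence of upper Banach density zero, so their union is the desired set $Z$ on the progression, off which $f$ equals a periodic (here, constant) function. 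Reassembling over residue classes as in the first paragraph completes the proof.

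The main obstacle is the nilpotent form of the descent step: passing from ``$F$ agrees $\mu_W$-a.e.\ along two distinct, individually equidistributed polynomial orbits'' to a genuine reduction of $\dim X$. In the torus case the two orbits differ by a nonzero constant, so $F$ is at once invariant under an honest translation and one descends to a quotient torus; in general one must analyse $W = \overline{\{(g(Qn+c)\Gamma, g(Qn+c')\Gamma)\}}$ and extract its common-factor structure, which is exactly where the combinatorial description of semialgebraic return times from \cite{ByszewskiKonieczny2016} enters. A secondary technical point is to verify that the representation with a null semialgebraic discontinuity set is inherited by the quotient function at each inductive step, so that the well-distribution bound of the second paragraph continues to apply.
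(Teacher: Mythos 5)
Your overall architecture (Bergelson--Leibman representation, reduction to arithmetic progressions and a totally minimal/connected system, an exceptional set of Banach density zero coming from the null discontinuity locus, and then using weak periodicity to force the system to ``collapse'') matches the paper's, but the crucial step is missing. Your descent argument rests on two unproved assertions. First, you pass from ``$f(Qn+c)=f(Qn+c')$ for all $n$'' to ``$F(x)=F(y)$ for $\mu_W$-a.e.\ $(x,y)\in W$''. Equidistribution of the orbit in $W$ does not by itself upgrade pointwise agreement along the orbit to $\mu_W$-a.e.\ agreement when $F$ is merely continuous off a null set; some topological argument is required, and you do not supply one. Second, and more seriously, the case $W\subsetneq X\times X$ is resolved only by appeal to an unspecified ``structure theory of polynomial orbit closures in products'' producing a nontrivial connected subgroup $X_0$ under which $F$ is a.e.\ invariant. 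You flag this yourself as the main obstacle; as written it is a gap, not a proof, and it is not something that \cite{ByszewskiKonieczny2016} provides (the IP/IPS machinery there is used in the paper only for the sparse refinement, Theorem \ref{thm:main-optimized}, not for Theorem \ref{thm:main-weakly-periodic}).

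The paper's route shows that none of this joining analysis is needed. In the representation of Theorem \ref{thm:BergelsonLeibman} the sequence is a genuine orbit $T^nz$ of a single nilsystem, so the pair $(T^{Qn+c}z,\,T^{Qn+c'}z)$ lies on the graph of the \emph{fixed} homeomorphism $T^{d}$ with $d=c'-c$; there is no sub-nilmanifold $W$ to classify and no dimension to descend. The whole content is then Lemma \ref{lem:tot-min=>not-auto}: if $A$ satisfies $\cl A=\cl\inter A$ and $f(n)=\ifbra{T^nz\in A}$ on a thick set, weak periodicity forces $T^{d}(\cl A)\subset\cl A$ (proved by intersecting a syndetic return-time set with a thick agreement set), which by total minimality of $T^{d}$ forces $\cl A\in\{\emptyset,X\}$. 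Applying this to each open piece $\inter S_j$ shows exactly one of them is dense and the rest are empty, and unique ergodicity then bounds the exceptional set. I would encourage you to either prove your descent step in full (at minimum, recognise that the two orbits are time-translates of one another, which reduces the problem to invariance of the level sets of $F$ under $T^{d}$ and then to an ergodicity or minimality argument), or adopt the topological lemma, which sidesteps the measure-theoretic upgrade entirely.
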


(For the definition of Banach density, see Section \ref{sec:DEF}.)

Theorem \ref{thm:main-weakly-periodic}  is already sufficient to rule out automaticity of many natural examples of generalised polynomials. In particular, sequences such as $\floor{ \sqrt{2} n \floor{ \sqrt{3 n}}} \bmod{10}$ or $\floor{ \sqrt 2 n \floor{ \sqrt{3n}}^2 + \sqrt{5} n + \sqrt{7} } \bmod{10}$ are not automatic. For details and more examples, see Corollary \ref{thm:gen-poly-eqdist=>not-auto}.

To obtain stronger bounds on the size of the ``exceptional'' set $Z$, we restrict ourselves to automatic sequences and exploit some finer properties of generalised polynomials studied in the companion paper \cite{ByszewskiKonieczny2016}.
We use results concerning growth properties of automatic sequences to derive the following dichotomy: If $a \colon \NN_0 \to \{0,1\}$ is an automatic sequence, then the set of integers where $a$ takes the value $1$ is either combinatorially rich (it contains what we call an \IPS\ set) or extremely sparse (in particular, the number of its elements up to $N$ grows as $\log^{r}(N)$ for some integer $r$); see Theorem \ref{thm:Structure-Auto}. This result is especially interesting for sparse automatic sequences, i.e., automatic sequences which take non-zero values on a set of integers of density $0$. 
 Conversely, in \cite{ByszewskiKonieczny2016} we show that sparse generalised polynomials must be free of similar combinatorial structures. As a consequence, we prove the following result.

\begin{alphatheorem}\label{thm:main-optimized}
	Suppose that a sequence $f \colon \NN_0 \to \RR$ is automatic and generalised polynomial. Then there exists a periodic function $b\colon \NN_0 \to \RR$, a set $Z \subset \NN_0$, and a constant $r$ such that $f(n) = \periodic(n)$ for $n \in \NN_0 \setminus Z$ and 
$$ \sup_{M} \abs{ Z \cap [M,M+N)} = O\bra{ \log^{r}(N)}$$	
	as $N \to \infty$ for a certain constant $r$ (dependent on $f$).
\end{alphatheorem}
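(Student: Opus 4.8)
The plan is to combine Theorem~\ref{thm:main-weakly-periodic} with the structural dichotomy for automatic sets announced as Theorem~\ref{thm:Structure-Auto}. Since an automatic sequence is weakly periodic (Lemma~\ref{lem:auto=>weak-per}), Theorem~\ref{thm:main-weakly-periodic} already produces a periodic function $\periodic\colon\NN_0\to\RR$ and a set $Z\subset\NN_0$ of upper Banach density zero with $f(n)=\periodic(n)$ off $Z$. The whole content of the present theorem is therefore to upgrade the bound on $Z$ from ``density zero'' to ``$O(\log^r N)$ in every window of length $N$''. The key observation is that $Z$ can be taken to be the set where $f$ and $\periodic$ disagree, and this set is itself governed by an automatic sequence. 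Concretely, if $f$ is $k$-automatic and $\periodic$ is periodic with period dividing some $q$, then $\periodic$ is (eventually) $k$-automatic as well, and hence the indicator $a(n):=\mathbf 1[f(n)\neq \periodic(n)]$ is $k$-automatic (automatic sequences being closed under finite coordinatewise operations and under comparison of two automatic sequences). Thus $Z=\{n:a(n)=1\}$ up to a finite set.

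Now apply the dichotomy of Theorem~\ref{thm:Structure-Auto} to the automatic set $Z$: either $Z$ contains an \IPS\ set, or $Z$ is ``extremely sparse'', with $\abs{Z\cap[0,N)}=O(\log^r N)$ for some integer $r$ depending on the automaton. I would argue that the first alternative is impossible here. Indeed, $Z$ is the symmetric-difference set of a generalised polynomial sequence $f$ with a genuinely periodic sequence $\periodic$; using the results of the companion paper \cite{ByszewskiKonieczny2016} on the combinatorial poverty of sparse generalised polynomials (the converse direction alluded to in the paragraph preceding the statement), a density-zero set arising this way cannot contain an \IPS\ configuration. Slightly more carefully: if $Z$ contained an \IPS\ set, then on that \IPS\ set $f$ would differ from the periodic pattern $\periodic$ in a structured way, contradicting the fact that a bounded generalised polynomial which is periodic off a density-zero set must be free of such structure on the exceptional set. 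Hence only the sparse alternative survives, giving precisely $\sup_M \abs{Z\cap[M,M+N)}=O(\log^r N)$; translating the window from $[0,N)$ to an arbitrary $[M,M+N)$ costs nothing more than enlarging $r$ or the implied constant, since the $\log^r$ growth of an automatic sparse set is window-uniform (one can absorb the shift by passing to the automaton reading $n-M$, or simply note $\abs{Z\cap[M,M+N)}\le \abs{Z\cap[0,M+N)}$ is too weak and instead invoke the uniform version of the growth bound from Theorem~\ref{thm:Structure-Auto}).

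The main obstacle I expect is the step ruling out the \IPS\ alternative: one has to connect the ``exceptional set'' $Z$ produced abstractly by Theorem~\ref{thm:main-weakly-periodic} with the fine combinatorial results on generalised polynomials in \cite{ByszewskiKonieczny2016}, and in particular verify that $\mathbf 1_Z$ (or $f\cdot\mathbf 1_Z$, or $f-\periodic$) is itself, in the relevant sense, a sparse generalised polynomial to which those results apply. This may require revisiting the proof of Theorem~\ref{thm:main-weakly-periodic} to extract that the periodic approximant $\periodic$ can be chosen so that $f-\periodic$ is again generalised polynomial, rather than treating Theorem~\ref{thm:main-weakly-periodic} purely as a black box. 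A secondary, more routine point is checking that a periodic sequence is $k$-automatic for every base $k\ge2$ and that the class of $k$-automatic sequences is closed under the Boolean operation $\mathbf 1[f\neq \periodic]$ when $f$ and $\periodic$ are $k$-automatic with values in a common finite set — standard facts about automatic sequences, but worth citing precisely (e.g.\ from \cite{AlloucheShallit-book}). Finally, one should confirm that the integer $r$ and the implied constant in the $O(\log^r N)$ bound are uniform over the choice of window $M$, which is exactly the content we are quoting from Theorem~\ref{thm:Structure-Auto}.
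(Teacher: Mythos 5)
Your proposal follows essentially the same route as the paper: Theorem \ref{thm:main-weakly-periodic} supplies $b$ and the density-zero set $Z$, one observes that $Z$ is both automatic and generalised polynomial, and then the arid/\IPS\ dichotomy of Theorem \ref{thm:Structure-Auto} together with Theorem \ref{thm:GP-vs-IPS} (this combination is exactly Theorem \ref{thm:main-B2}) forces $Z$ to be arid, whence Lemma \ref{lem:v-sparse-growth} gives the window-uniform $O(\log^r N)$ bound. The one step you leave open --- that the indicator of $Z$ is a generalised polynomial --- requires no revisiting of the proof of Theorem \ref{thm:main-weakly-periodic}: since $b$ is periodic it is itself a generalised polynomial, so $f-b$ is one too, and one simply composes $f-b$ with a polynomial $p$ satisfying $p(0)=1$ and $p(x-y)=0$ for all $x\in f(\NN_0)$, $y\in b(\NN_0)$ with $x\neq y$.
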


In fact, we obtain a much more precise structural description of the exceptional set $Z$ (see Theorem \ref{thm:main-B2} for details). Similar techniques allow us to show non-automaticity of some sparse generalised polynomials. For instance, the sequence given by
$$
	n \mapsto 
	\begin{cases}
		1, &\text{if } \fpa{ \sqrt{2} n \floor{ \sqrt{3}n }} < n^{-c};\\
		0, &\text{otherwise} 
	\end{cases}
$$
is not automatic provided that $c$ is small enough. (Here, $\fpa{x}$ denotes the distance of $x$ from $\ZZ$.)
For details, see Example \ref{prop:Heisenberg-exple}.

While Theorem \ref{thm:main-optimized} does not resolve Conjecture \ref{conjecture:main}, our proof thereof greatly restricts the number of possible counterexamples. In fact, in order to prove  Conjecture \ref{conjecture:main}, it would suffice to prove that the characteristic sequence of powers of an integer $k\geq 2$ given by $$g_k(n) =
	\begin{cases}
		1, & \text{ if } n = k^t \text{ for some } t \geq 0;\\
		0, & \text{ otherwise}
	\end{cases}
	$$ is not a generalised polynomial.

\begin{alphatheorem}\label{thm:main-dichotomy}
	Let $k \geq 2$ be an integer. Then exactly one of the following statements holds:
\begin{enumerate}
	\item All sequences that are simultaneously $k$-automatic and generalised polynomial are ultimately periodic.
	\item The characteristic sequence $g_k$ of the powers of $k$ is generalised polynomial.
\end{enumerate}
\end{alphatheorem}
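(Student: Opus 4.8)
The plan is to show that statement (1) and the negation of statement (2) are equivalent; since at least one of them must hold (tautologically), and they cannot both fail, exactly one holds. The only non-trivial direction is: if $g_k$ is \emph{not} a generalised polynomial, then every sequence that is simultaneously $k$-automatic and generalised polynomial is ultimately periodic. So assume $f$ is $k$-automatic and generalised polynomial but \emph{not} ultimately periodic, and derive that $g_k$ is a generalised polynomial.

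First I would invoke Theorem \ref{thm:main-optimized}: there is a periodic $b$ and an exceptional set $Z$ with $\sup_M \abs{Z \cap [M, M+N)} = O(\log^r N)$ such that $f(n) = b(n)$ off $Z$. Since $f$ is not ultimately periodic while $b$ is periodic, the set $Z$ must be infinite; moreover $f - b$ (or rather the indicator that $f$ and $b$ disagree, composed with the values of $f$) is again both automatic and generalised polynomial, and it is supported exactly on $Z$. Thus I may assume from the outset that $f$ is the characteristic function of an infinite set $Z$ which is both automatic and generalised polynomial, with the extreme sparsity bound above — this is essentially the content of the finer structural result Theorem \ref{thm:main-B2} alluded to in the excerpt, and I would lean on that theorem for the precise shape of $Z$. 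The key point extracted from the automatic side (Theorem \ref{thm:Structure-Auto}) is that a sparse automatic set, being this sparse, is built in a controlled recursive way out of the base-$k$ digit structure: the gaps between consecutive elements grow geometrically, and in fact $Z$ is (a finite union of shifts/dilates of) a set whose elements in base $k$ follow an eventually periodic pattern on a logarithmically thin scale — the prototype being the set of powers of $k$ itself.

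The heart of the argument is then a \emph{bootstrapping} step: from the existence of \emph{one} infinite automatic-and-generalised-polynomial set $Z$, manufacture the specific set $\{k^t : t \geq 0\}$ as a generalised polynomial. Concretely, using that generalised polynomials are closed under the natural arithmetic operations and under passing to arithmetic subprogressions and finite unions (as developed via the Bergelson–Leibman correspondence with nilsystems), I would: (i) pass to a single arithmetic progression and a single "digit-pattern class" on which $Z$ restricts to a set $Z'$ whose $n$-th element $z_n$ satisfies $z_{n+1} = k^{c} z_n + O(1)$ for a fixed $c \geq 1$ and all large $n$; (ii) observe that such a $Z'$, being both automatic and GP, forces the lower-order correction terms to be eventually constant (this is where the sparsity bound $O(\log^r N)$ is crucial — it rules out the correction terms themselves carrying non-periodic automatic information, by induction on $r$, the base case $r=0$ being exactly Theorem \ref{thm:main-weakly-periodic}); hence $Z'$ is, up to a finite set and an affine change of variables $t \mapsto at+b$ in the exponent, literally $\{k^{ct} : t\}$; (iii) finally, recover $\{k^t\}$ itself from $\{k^{ct+j} : t\}$ for $j = 0, \dots, c-1$ by taking a finite union and applying closure of GP sets under the substitutions relating $k^{ct+j}$ to powers of $k$, together with the standard fact that the GP property of a set is preserved under $n \mapsto k^{j} n$ and under intersection with $\{k^{j}m : m\}$.

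The main obstacle is step (ii): ruling out that the "$O(1)$ corrections" in the recursion $z_{n+1} = k^c z_n + (\text{correction})$ can themselves be a genuinely non-periodic automatic sequence, which would produce an automatic-and-GP set that is sparse but \emph{not} essentially $\{k^t\}$. Here I expect to need the full strength of the companion paper's dichotomy — that sparse generalised polynomials contain no \IPS\ structure (Theorem \ref{thm:Structure-Auto} combined with the results of \cite{ByszewskiKonieczny2016}) — applied not to $Z$ directly but to the derived sequence of corrections, together with a descent on the parameter $r$ controlling the logarithmic growth rate, so that each application of the dichotomy strictly decreases $r$ and the induction terminates at the periodic case handled by Theorem \ref{thm:main-weakly-periodic}. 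Getting the bookkeeping of this descent to close — in particular checking that the derived correction sequence is again automatic \emph{and} generalised polynomial with a strictly smaller growth exponent — is the delicate part; everything else is an assembly of the already-cited closure and structure theorems.
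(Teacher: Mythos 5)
Your overall architecture is the paper's: reduce via Theorem \ref{thm:main-optimized} to an infinite sparse set $Z$ that is simultaneously $k$-automatic and generalised polynomial, conclude from Theorem \ref{thm:main-B2} that $Z$ is arid, and then show that the existence of \emph{any} infinite arid generalised polynomial set forces $\{k^l : l\geq 0\}$ to be generalised polynomial. The gap is in how you execute this last step. Your structural picture of $Z$ is wrong for arid sets of rank $r\geq 2$: a set such as $\set{[v_0w_1^{l_1}v_1w_2^{l_2}v_2]_k}{l_1,l_2\in\NN_0}$ has neither geometrically growing gaps nor an enumeration satisfying $z_{n+1}=k^cz_n+O(1)$. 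The paper's reduction to rank $1$ (Proposition \ref{jeszczejednoespresso-A} and Corollary \ref{jeszczejednoespresso}) is a purely combinatorial induction on the rank, intersecting with $\Sigma_k^*\tilde u$ for a carefully chosen suffix and then with an arithmetic progression $n\ZZ+r$ (which keeps the set in the GP class); it uses nothing from the GP side. Your proposed substitute --- a descent on the exponent $r$ in the $O(\log^rN)$ bound driven by applying the no-\IPS\ theorem to a ``derived correction sequence'' --- does not make sense as stated: the corrections are indexed by the enumeration of $Z'$, not by $\NN_0$, so they form neither an automatic set nor a generalised polynomial set to which Theorem \ref{thm:GP-vs-IPS} could be applied; and once the rank-$1$ normal form is reached there are no corrections to rule out, since $[v_iw^lu]_k=[u]_k+k^s[w]_k\frac{k^{tl}-1}{k^t-1}+[v_i]_kk^{tl+s}$ is an exact formula.

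Second, even granting the rank-$1$ form $\bigcup_{i=1}^p\set{[v_iw^lu]_k}{l\in\NN_0}$, your step (iii) only addresses passing from $\{k^{ct}\}$ to $\{k^t\}$; the remaining obstacle is the finite union over $i$ with distinct leading constants. The paper disposes of it by explicit affine substitutions (GP sets are closed under precomposition with affine maps) to normalise to $\bigcup_i\{c_ik^l\}$ with $c_1=1$ and $1\leq c_i<k^2$, and then by intersecting with the congruence class $n\equiv 1\pmod{k^2-1}$: since $k^l\bmod(k^2-1)$ alternates between $1$ and $k$, only $c_i\in\{1,k\}$ can contribute, and the intersection is exactly $\{k^{2l}\}$, contradicting (in the paper's contrapositive formulation) the assumption that no $\{k^{tl}\}$ is generalised polynomial. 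Without this isolation step, or the combinatorial rank reduction, your argument does not close; the rest of your outline (mutual exclusivity, the reduction to a sparse exceptional set, the appeal to Theorems \ref{thm:main-optimized} and \ref{thm:main-B2}) agrees with the paper.
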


Unfortunately, we are currently unable to decide which of the two possibilities in Theorem \ref{thm:main-dichotomy} holds. Although we expect that $g_k$ should not be a generalised polynomial, in \cite{ByszewskiKonieczny2016} we obtain several examples of algebraic numbers $\lambda > 1$ such that the characteristic function of the set $E_\lambda := \set{ \br{ \lambda^i} }{i \in \NN_0}$ is generalised polynomial, where $\br{x}$ denotes the closest integer to $x$. All our examples are Pisot units (a Pisot number is an algebraic integer $\lambda > 1$ all of whose conjugates have modulus $<1$; a Pisot unit is a Pisot number whose minimal polynomial has constant term $\pm 1$). Conversely, there is no $\lambda > 1$ for which we can prove that the characteristic function of $E_\lambda$ is not given by a generalised polynomial. This prompts us to propose the following question.

\begin{alphaquestion}\label{question:main}
	Suppose that $\lambda > 1$ is such that the characteristic function of the set $E_\lambda := \set{ \br{ \lambda^i} }{i \in \NN_0}$ is given by a generalised polynomial. Is it then necessarily the case that $\lambda$ is a Pisot unit?
\end{alphaquestion}

For a more detailed discussion of this question, see \cite[Section 6]{ByszewskiKonieczny2016}. If $\lambda$ is a Pisot number, then $\br{ \lambda^i}$ obeys a linear recurrence. We show that for such $\lambda$, the characteristic function of $E_\lambda$ cannot be a counterexample to Conjecture \ref{conjecture:main} (see Proposition \ref{prop:recurrent=>not-auto}) except possibly if $\lambda$ is an integer. 

By Theorem \ref{thm:main-dichotomy}, determining the validity of Conjecture \ref{conjecture:main} is equivalent to answering Question \ref{question:main} in the special case when $\lambda$ is an integer.

\subsection*{Contents} 
In Section 1, we discuss some basic notions and results concerning automatic sequences and dynamical systems. We intended this section to be accessible to readers familiar with only one (or neither) of these topics. In Section 2, we prove Theorem \ref{thm:main-sortof} and Theorem \ref{thm:main-weakly-periodic} using methods from topological dynamics. In Section 3, we use known results on growth and structure of automatic sequences to prove that they are either very sparse and structured (in which case we call them arid) or are combinatorially rich. Together with a result about dynamics on nilmanifolds, this allows us to obtain Theorem \ref{thm:main-optimized}. Section 4 contains four seperate topics concerning examples and non-examples of automatic sets and uniform density of symbols in automatic sequences. Section 5 is devoted to the proof of Theorem \ref{thm:main-dichotomy}. Finally, Section 6 discusses some open problems and future research topics.

\subsection*{Acknowledgements} 

The authors thank Ben Green for much useful advice during the work on this project, Vitaly Bergelson and Inger \Haland-Knutson for valuable comments on the distribution of generalised polynomials, and Jean-Paul Allouche and Narad Rampersad for information about related results on automatic sequences.

Thanks also go to Sean Eberhard, Dominik Kwietniak, Freddie Manners, Rudi Mrazovi\'{c}, Przemek Mazur, Sofia Lindqvist, and Aled Walker for many informal discussions.

This research was supported by the National Science Centre, Poland (NCN) under grant no. DEC-2012/07/E/ST1/00185.

Finally, we would like to express our gratitude to the organisers of the  conference \emph{New developments around ${\times 2}\ {\times 3}$ conjecture and other classical problems in Ergodic Theory} in Cieplice, Poland in May 2016 where we began our project. 
\section{Background} \label{sec:DEF}

\subsection*{Notations and generalities}

We denote the sets of positive integers and of nonnegative integers by $\N=\{1,2,\ldots\}$ and $\N_0=\{0,1,\ldots\}$. We denote by $[N]$ the set $[N]=\{0,1,\ldots, N-1\}.$ We use the Iverson convention: whenever $\varphi$ is any sentence, we denote by $\ifbra{\varphi}$ its logical value ($1$ if $\varphi$ is true and $0$ otherwise). We denote the number of elements in a finite set $A$ by $|A|$.

For a real number $r$, we denote its integer part by $\lfloor r \rfloor$,  its fractional part  by $\{r\}=r-\lfloor r \rfloor$,  the nearest integer to $r$ by $\lbr r \rbr = \lfloor r+1/2\rfloor$, and the distance from $r$ to the nearest integer by $\fpa{r} = |r-\lbr r\rbr|$.

We use some standard asymptotic notation. Let $f$ and $g$ be two functions defined for sufficiently large integers. We say that $f=O(g)$ or $f \ll g$ if there exists $c>0$ such that $|f(n)|\leq c \abs{g(n)}$ for sufficiently large $n$. We say that $f=o(g)$ if for every $c >0$  we have  $|f(n)|\leq c |g(n)|$ for sufficiently large $n$. 

For a subset $E\subset \N_0$, we say that $E$ has \emph{natural density} $d(A)$ if $$\lim_{N\to \infty} \frac{|E\cap [N]|}{N} = d(A).$$ 
We say that $E\subset \N_0$ has \emph{upper Banach density} $d^*(A)$ if
 $$\limsup_{N\to \infty} \max_{M} \frac{|E\cap [M,M+N) |}{N} = d^*(A).$$

We now formally define generalised polynomials. 

\begin{definition}[Generalised polynomial]
	The family $\GP$ of \emph{generalised polynomials} is the smallest set of functions $\ZZ \to \RR$ containing the polynomial maps and closed under addition, multiplication, and the operation of taking the integer part. Whenever it is more convenient, we regard generalised polynomials as functions on $\N_0$.
	
	 A set $E\subset \Z$ (or $E \subset \NN_0$) is called \emph{generalised polynomial} if its characteristic function given by $f(n)=\ifbra{n\in E}$ is a generalised polynomial. (Note that this definition depends on whether we are regarding the generalised polynomial as a function on $\Z$ or on $\NN_0$ and a generalised polynomial set $E \subset \NN_0$ might a priori not be generalised polynomial when considered as a subset of $\Z$. It will always be clear from the context which meaning we have in mind.)
\end{definition}

An example of a generalised polynomial is therefore a function $f$ given by the formula $f(n)=\sqrt{3}\lfloor \sqrt{2}n^2+1/7\rfloor^2+n\lfloor n^3+\pi\rfloor$. 

\subsection*{Automatic sequences}

Whenever $A$ is a (finite) set, we denote the free monoid with basis $A$ by $A^*$. It consists of finite words in $A$, including the empty word $\epsilon$, with the operation of concatenation. We denote the concatenation of two words $v,w\in A^*$ by $vw$ and we denote the length of a word $w\in A^*$ by $|w|$. In particular, $|\epsilon|=0$. We say that a word $v\in A^*$ is a factor of a word $w \in A^*$ if there exist words $u,u' \in A^*$ such that $w=uvu'$. We denote by $w^{\Rrev}\in A^*$ the reversal of the word $w\in A^*$ (the word in which the elements of $A$ are written in the opposite order).

Let $k \geq 2$ be an integer and denote by $\Sigma_k=\{0,1,\ldots, k-1\}$ the set of digits in base $k$. For $w\in \Sigma_k^*$, we denote by $[w]_k$ the integer whose expansion in base $k$ is $w$, i.e., if $w=v_l v_{l-1}\cdots v_1 v_0$, $v_i \in\Sigma_k$, then $[w]_k=\sum_{i=0}^l v_ik^i$. Conversely, for an integer $n\geq 0$, we write $(n)_k\in \Sigma_k^*$ for the base-$k$ representation of $n$ (without an initial zero). In particular, $(0)_k = \epsilon$.

The class of automatic sequences consists, informally speaking, of finite-valued sequences $(a_n)_{n\geq 0}$ whose values $a_n$ are obtained via a finite procedure from the digits of base-$k$ expansion of an integer $n$. 

The most famous example of an automatic sequence is arguably the Thue--Morse sequence, first discovered by Prouhet in 1851. Let $s_2(n)$ denote the sum of digits of the base 2 expansion of an integer $n$. Then the Thue--Morse sequence $(t_n)_{n\geq 0}$ is given by $t_n=1$ if $s_2(n)$ is odd and $t_n=0$ if $s_2(n)$ is even.

We will introduce the basic properties of automatic sequences. For more information, we refer the reader to the canonical book of Allouche and Shallit \cite{AlloucheShallit-book}. To formally introduce the notion of automatic sequences, we begin by discussing finite automata.

\begin{definition}\label{automdef1} A deterministic finite $k$-\emph{automaton with output} (which we will just call a $k$-automaton) $\mathcal{A}=(S,\Sigma_k,\delta,s_0,\Omega,\tau)$ consists of the following data: \begin{enumerate}
\item a finite set of states $S$;
\item an initial state $s_{0}\in S$;
\item a transition map $\delta \colon S \times \Sigma_k \to S$;
\item an output set $\Omega$;
\item an output map $\tau \colon S \to \Omega$.
\end{enumerate}
We extend the map $\delta$ to a map $\delta\colon S \times \Sigma_k^* \to S$ (denoted by the same letter) by the recurrence formula $$\delta(s,\epsilon)=s, \quad \delta(s,wv)=\delta(\delta(s,w),v), \quad s\in S, w\in \Sigma_k^*, v\in\Sigma_k.$$ \end{definition}

We call a sequence $k$-\emph{automatic} if it can be produced by a $k$-automaton in the following manner: one starts at the initial state of the automaton, follows the digits of the base-$k$ expansion of an integer $n$, and then uses the output function to print the $n$-th term of the sequence. This is stated more precisely in the following definition.

\begin{definition}\label{automdef2} A sequence $(a_n)_{n\geq 0}$ with values in a finite set $\Omega$ is $k$-\emph{automatic} if there exists a $k$-automaton $\mathcal{A}=(S,\Sigma_k,\delta,s_0,\Omega,\tau)$ such that $a_n=\tau\bra{\delta(s_{0},(n)_k)}$. We call a set $E$ of nonnegative integers \emph{automatic} if the characteristic sequence $(a_n)_{n\geq 0}$ of $E$ given by $a_n=\ifbra{n\in E}$ is automatic.\end{definition}

For some applications, it will be useful to consider the following variant of the definition. A function $\tilde a \colon \Sigma_k^* \to \Omega$ is \emph{automatic} if there exists a $k$-automaton $\mathcal{A}=(S,\Sigma_k,\delta,s_0,\Omega,\tau)$ such that $\tilde a(u) = \tau( \delta(s_0, u ) )$ for $u \in \Sigma_k^*$.

The values of the Thue--Morse sequence are  given by the  $2$-automaton 
\begin{center}
\begin{tikzpicture}[shorten >=1pt,node distance=2cm, on grid, auto] 
   \node[state] (s_0)   {$s_{0}$}; 
   \node[state] (s_1) [right=of s_0] {$s_1$}; 
  \tikzstyle{loop}=[min distance=6mm,in=210,out=150,looseness=7]
  
    \path[->] 
    
    (s_0) edge [loop left] node {0} (s_0)
          edge [bend right] node [below]  {1} (s_1);
          
 \tikzstyle{loop}=[min distance=6mm,in=30,out=-30,looseness=7]
 \path[->]
    (s_1) edge [bend right] node [above]  {1} (s_0)
          edge [loop right] node  {0} (s_1);
\end{tikzpicture}

\end{center}
with nodes depicting the states of the automaton, edges describing the transition map, $\tau(s_{0})=0$, and $\tau(s_1)=1$. Thus, the Thue--Morse sequence is $2$-automatic.

In the definition above,  the automaton reads the digits starting with the most significant one. In fact, we might equally well demand that the digits be read starting with the least significant digit or that the automaton produce the correct answer even if the input contains some leading zeros. Neither of these modifications changes the notion of automatic sequence \cite[Theorem 5.2.3]{AlloucheShallit-book} (though of course for most sequences we would need to use a different automaton to produce a given automatic sequence).

There is a number of equivalent definitions of the notion of automatic sequence connecting them to different branches of mathematics (stated for example in terms of algebraic power series over finite fields or letter-to-letter  projections of fixed points of uniform morphisms of free monoids). We will need one such definition that has a combinatorial flavour and is expressed in terms of the $k$-kernel.

\begin{definition}\label{automdef3} The $k$-\emph{kernel} $\Nker_k((a_n))$ of a sequence $(a_n)_{n\geq 0}$ is the set of its subsequences of the form $$\Nker_k((a_n))=\{(a_{k^l n+r})_{n\geq 0} \mid l\geq 0, 0\leq r < k^l\}.$$
\end{definition}

Automaticity of a sequence is equivalent to finiteness of its kernel, originally due to Eilenberg \cite{Eilenberg-book-A}. 

\begin{proposition}\cite[Theorem 6.6.2]{AlloucheShallit-book} \label{automthm1} Let $(a_n)_{n\geq 0}$ be a sequence. Then the following conditions are equivalent: \begin{enumerate}
\item The sequence $(a_n)$ is $k$-automatic. 
\item The $k$-kernel $\Nker_k((a_n))$ is finite.\end{enumerate}\end{proposition}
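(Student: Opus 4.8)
I would prove the two implications separately; both are formal, with no analytic or combinatorial content. For (i)$\Rightarrow$(ii) the plan is to observe that every element of the $k$-kernel is produced by the \emph{same} automaton with only its output map changed, and that there are only finitely many output maps available. For (ii)$\Rightarrow$(i) the plan is to promote the finite kernel itself to the state set of an automaton that computes $(a_n)$.

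\emph{From (i) to (ii).} Fix a $k$-automaton $\mathcal{A}=(S,\Sigma_k,\delta,s_0,\Omega,\tau)$ producing $(a_n)$. For a word $u\in\Sigma_k^*$ let $\tau_u\colon S\to\tau(S)$ be given by $\tau_u(s)=\tau(\delta(s,u))$. Given $l\geq 0$ and $0\leq r<k^l$, let $u_r\in\Sigma_k^l$ be the length-$l$ base-$k$ word with $[u_r]_k=r$ (padded with leading zeros). For every $n\geq 1$ one has $(k^ln+r)_k=(n)_k\,u_r$, whence $a_{k^ln+r}=\tau\bigl(\delta(s_0,(n)_k u_r)\bigr)=\tau_{u_r}\bigl(\delta(s_0,(n)_k)\bigr)$; also $a_r\in\tau(S)$. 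Thus the kernel element $(a_{k^ln+r})_{n\geq 0}$ is completely determined by the pair $(\tau_{u_r},a_r)$, and since there are at most $|\tau(S)|^{|S|+1}$ such pairs, $\Nker_k((a_n))$ is finite. (If one first replaces $\mathcal{A}$ by an automaton insensitive to leading zeros --- a standard reduction, also recalled in the text above --- then $a_{k^ln+r}=\tau_{u_r}\bigl(\delta(s_0,(n)_k)\bigr)$ for all $n\geq 0$, the case $n=0$ needs no separate treatment, and the kernel element is determined by $\tau_{u_r}$ alone.)

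\emph{From (ii) to (i).} Suppose $\Nker_k((a_n))$ is finite. First note that each value $a_m$ appears as the $0$-th term of the kernel element $(a_{k^ln+m})_{n\geq0}$ for any $l$ with $m<k^l$; hence $(a_n)$ is finitely valued and $\Omega:=\{b_0\mid b\in\Nker_k((a_n))\}$ is a well-defined finite output alphabet. Now form the automaton with state set $S:=\Nker_k((a_n))$, initial state $s_0:=(a_n)_{n\geq0}$, transition $\delta\bigl((b_n)_n,j\bigr):=(b_{kn+j})_{n\geq0}$ --- this indeed lands in $S$, since if $b=(a_{k^ln+r})$ then $(b_{kn+j})=(a_{k^{l+1}n+(k^lj+r)})$ with $0\leq k^lj+r<k^{l+1}$ --- and output $\tau\bigl((b_n)_n\bigr):=b_0$. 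A routine induction on $|w|$, using $(wj)^{\Rrev}=j\,w^{\Rrev}$ and $[j\,w^{\Rrev}]_k=jk^{|w|}+[w^{\Rrev}]_k$, shows $\delta(s_0,w)=(a_{k^{|w|}n+[w^{\Rrev}]_k})_{n\geq0}$. Feeding in the digits of $n$ starting from the least significant one, i.e.\ reading the word $(n)_k^{\Rrev}$, therefore gives $\tau\bigl(\delta(s_0,(n)_k^{\Rrev})\bigr)=a_n$. So $(a_n)$ is computed by a $k$-automaton that reads digits from the low end, and by the equivalence of that convention with the one of Definition~\ref{automdef2} (recalled in the text above) the sequence $(a_n)$ is $k$-automatic.

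\emph{Expected difficulty.} There is no substantial obstacle: the argument is bookkeeping about automata. The only points needing a little attention are the direction in which digits are fed in and the handling of leading zeros --- both disposed of by invoking the standard equivalences between these models, or alternatively by carrying the reversal $w\mapsto w^{\Rrev}$ through the computation as above --- together with the small observation that finiteness of the kernel already forces $(a_n)$ to take finitely many values, which is exactly what makes the output alphabet in the second construction legitimate.
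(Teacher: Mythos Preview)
Your proof is correct. Note, however, that the paper does not supply its own proof of this proposition: it is quoted from \cite[Theorem 6.6.2]{AlloucheShallit-book} without argument. Your write-up is essentially the standard proof one finds there --- in one direction, observing that kernel elements are parametrised by finitely many ``shifted'' output functions $\tau_u$, and in the other, turning the finite kernel itself into the state set of an automaton reading digits least-significant-first --- so there is nothing to compare beyond saying that you have reproduced the classical argument accurately, including the small caveats about leading zeros and reading direction.
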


For the Thue--Morse sequence we have the relations $t_{2n}=t_n$, $t_{2n+1}=1-t_n$, and hence one easily sees that the $2$-kernel $\Nker_2((t_n))$ consists of only two sequences $\Nker_2((t_n))=\{t_n, 1-t_n\}$. This gives another argument for the $2$-automaticity of the Thue--Morse sequence.

An automatic sequence by definition takes only finitely many values. In 1992 Allouche and Shalit \cite{Allouche-Shallit-1992} generalised the notion of automatic sequences to the wider class of  $k$-regular sequences  that are allowed to take values in a possibly infinite set. The definition of regular sequences is stated in terms of the $k$-kernel. For simplicity, we state the definition over the ring of integers, though it could also be introduced over  a general (noetherian) ring.

\begin{definition}\label{automdef4} Let $(a_n)_{n\geq 0}$ be a sequence of integers. We say that the sequence $(a_n)$ is $k$-\emph{regular} if its $k$-kernel $\Nker_k((a_n))$ spans a finitely generated abelian subgroup of $\Z^{\N_0}$.
\end{definition}

For example, the following sequences are easily seen to be $2$-regular: $(t_n)_{n\geq 0}$, $(n^3+5)_{n\geq 0}$, $(s_2(n))_{n\geq 0}$. (The corresponding subgroups spanned by the $2$-kernel have rank $2$, $4$, and $2$, respectively. In the case of $t=(t_n)_{n\geq 0}$, the subgroup spanned by the $2$-kernel is free abelian with basis consisting of $t$ and the constant sequence $(1)_{n\geq 0}$.) In fact, every $k$-automatic (integer-valued) sequence is obviously $k$-regular, and the following converse result holds.

\begin{theorem}\label{automthm2}\cite[Theorem 16.1.5]{AlloucheShallit-book}
Let  $(a_n)_{n\geq 0}$ be a sequence of integers. Then the following conditions are equivalent: \begin{enumerate}
\item The sequence $(a_n)$ is $k$-automatic. 
\item The sequence $(a_n)$ is $k$-regular and takes only finitely many values.\end{enumerate}\end{theorem}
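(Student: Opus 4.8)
The statement is an equivalence between $k$-automaticity and the conjunction of $k$-regularity with finite-valuedness, and the natural bridge on both sides is Proposition~\ref{automthm1}, which characterises $k$-automatic sequences as exactly those with finite $k$-kernel. I would prove the two implications separately.

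The implication $(1)\Rightarrow(2)$ is essentially immediate. A $k$-automatic sequence takes by definition only finitely many values, and these lie in $\ZZ$ since the sequence is integer-valued; moreover, by Proposition~\ref{automthm1} its $k$-kernel $\Nker_k((a_n))$ is a \emph{finite} set of sequences, and any finite subset of $\ZZ^{\NN_0}$ generates a finitely generated subgroup. Hence $(a_n)$ is $k$-regular and finitely-valued.

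For $(2)\Rightarrow(1)$, assume $(a_n)$ is $k$-regular and takes all its values in a finite set $F\subset\ZZ$, and let $G\le\ZZ^{\NN_0}$ be the subgroup generated by $\Nker_k((a_n))$, which is finitely generated by hypothesis. The key elementary observation is that every element of $\Nker_k((a_n))$ has the form $(a_{k^\ell n+r})_{n\ge 0}$, so each of its terms is a term of $(a_n)$ and therefore lies in $F$. Thus it suffices to show that a finitely generated subgroup of $\ZZ^{\NN_0}$ can contain only finitely many sequences with values in the fixed finite set $F$. To see this, let $V\subset\QQ^{\NN_0}$ be the $\QQ$-span of $G$; since $G$ is finitely generated, $\dim V=d<\infty$. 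Picking a $\QQ$-basis $v_1,\dots,v_d$ of $V$, the (infinite) matrix with rows $v_i$ has rank $d$, so there are coordinates $n_1<\dots<n_d$ for which the $d\times d$ submatrix $\bigl(v_i(n_j)\bigr)_{i,j}$ is invertible; consequently the projection $\pi\colon V\to\QQ^d$, $v\mapsto\bigl(v(n_1),\dots,v(n_d)\bigr)$, is injective. Since $\pi$ maps $\Nker_k((a_n))$ into $F^d$, a set of size $|F|^d$, injectivity gives $|\Nker_k((a_n))|\le|F|^d<\infty$, and Proposition~\ref{automthm1} then yields that $(a_n)$ is $k$-automatic.

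The only place I expect to need genuine care is this last step — converting ``finitely generated subgroup all of whose relevant elements take values in a fixed finite set'' into an explicit finite cardinality bound — where the device of projecting onto finitely many well-chosen coordinates is what does the work; everything else (the membership bookkeeping for the kernel, the passage from the $\ZZ$-module to its $\QQ$-span) is routine. A cosmetic alternative is to invoke the structure theorem for finitely generated torsion-free abelian groups to write $G$ as free abelian of finite rank with an explicit basis and argue with the resulting evaluation matrix directly; this changes nothing essential.
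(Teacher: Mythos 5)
Your proof is correct. The paper gives no proof of this statement (it is quoted from Allouche--Shallit, Theorem 16.1.5), and your argument --- reducing both directions to the finite-kernel criterion of Proposition~\ref{automthm1} and, for the nontrivial direction, bounding the kernel by $|F|^d$ via an injective evaluation map on $d$ well-chosen coordinates of the $\QQ$-span --- is essentially the standard one from that reference.
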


\begin{corollary}\label{automthm3}\cite[Corollary 16.1.6]{AlloucheShallit-book}
Let  $(a_n)_{n\geq 0}$ be a sequence of integers that is $k$-regular and let $m\geq 1$ be an integer. Then the sequence $(a_n \bmod m)$ is $k$-automatic.

\end{corollary}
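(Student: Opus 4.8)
The plan is to use finite generation of the group spanned by the $k$-kernel of $(a_n)$ to show that, after reduction modulo $m$, the $k$-kernel becomes not merely a spanning set of a finitely generated group but actually a \emph{finite} set; $k$-automaticity then follows from Proposition \ref{automthm1} (equivalently Theorem \ref{automthm2}).

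First I would observe that termwise reduction modulo $m$ is a group homomorphism $\pi\colon \Z^{\N_0} \to (\Z/m\Z)^{\N_0}$, and that it commutes with the operation of passing to kernel subsequences: for all $l \geq 0$ and $0 \leq r < k^l$ we have $(a_n \bmod m)_{k^l n + r} = \pi\bigl((a_{k^l n + r})_n\bigr)$. Hence $\Nker_k\bigl((a_n \bmod m)\bigr) = \pi\bigl(\Nker_k((a_n))\bigr)$, and in particular this set is contained in $\pi(G)$, where $G \leq \Z^{\N_0}$ denotes the subgroup spanned by $\Nker_k((a_n))$, which is finitely generated by the definition of $k$-regularity.

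Next I would show that $\pi(G)$ is finite. Writing $G = \langle g_1, \dots, g_N \rangle$, its image $\pi(G)$ is generated by $\pi(g_1), \dots, \pi(g_N)$, and each $\pi(g_i)$ satisfies $m\,\pi(g_i) = \pi(m g_i) = 0$ since $m g_i \in m\Z^{\N_0}$. Thus $\pi(G)$ is a homomorphic image of $(\Z/m\Z)^N$ and has at most $m^N$ elements. Consequently $\Nker_k\bigl((a_n \bmod m)\bigr)$, being a subset of the finite set $\pi(G)$, is finite. Regarding $(a_n \bmod m)$ as a sequence over the finite set $\Z/m\Z$, Proposition \ref{automthm1} gives that it is $k$-automatic; alternatively, lifting the values to $\{0, 1, \dots, m-1\} \subset \Z$, the same bound shows the lifted sequence has $k$-kernel spanning a finite (hence finitely generated) subgroup of $\Z^{\N_0}$, so it is $k$-regular and finite-valued and Theorem \ref{automthm2} applies.

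There is no genuine obstacle in this argument; the only point meriting a moment's care is the interchange of the kernel operation with reduction modulo $m$, together with the elementary observation that a finitely generated subgroup of $(\Z/m\Z)^{\N_0}$ is automatically finite because it has exponent dividing $m$. It is precisely this observation that upgrades "finitely generated" (regularity) to "finite" (automaticity).
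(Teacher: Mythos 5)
Your proof is correct. The paper gives no argument of its own for this corollary — it is cited directly from Allouche--Shallit — and your reasoning (termwise reduction modulo $m$ commutes with passing to kernel subsequences, and the image of a finitely generated subgroup of $\Z^{\N_0}$ in $(\Z/m\Z)^{\N_0}$ has exponent dividing $m$ and is therefore finite, so Proposition \ref{automthm1} applies) is precisely the standard derivation of this fact in that reference; the key observation that finite generation plus bounded exponent yields finiteness is correctly identified and correctly executed.
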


A convenient tool for ruling out that a given sequence is automatic is provided by the pumping lemma.
\begin{lemma}\label{lem:pumping}\cite[Lemma 4.2.1]{AlloucheShallit-book} 	
	 Let $(a_n)_{n\geq 0}$ be a $k$-automatic sequence. Then there exists a constant $N$ such that for any $w \in \Sigma^*_k$ with $\abs{w} \geq N$ and any  integer $0 \leq L \leq \abs{w} - N$ there exist $u_0, u_1, v \in \Sigma^*_k$ such that $v\neq \epsilon$, $w = u_0 v u_1$, $L \leq \abs{u_0} \leq L + N - \abs{v}$, and $a_n$ takes the same value for all $n \in \set{ [u_0 v^t u_1]_k}{t \in \NN_0}$. 
\end{lemma}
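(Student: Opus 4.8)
The plan is to run the standard pumping-lemma argument for regular languages, following the states that an automaton for $(a_n)$ passes through while it reads a length-$N$ window of $w$.

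First I would fix a $k$-automaton $\mathcal{A} = (S, \Sigma_k, \delta, s_0, \Omega, \tau)$ generating $(a_n)$; invoking the remark following Definition \ref{automdef2}, I may take $\mathcal{A}$ to read digits most-significant-first and to be insensitive to leading zeros, so that $a_{[u]_k} = \tau(\delta(s_0, u))$ holds for \emph{every} $u \in \Sigma_k^*$, not merely for words of the form $(n)_k$. I then set $N := |S|$.

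Given $w$ with $|w| \geq N$ and $0 \leq L \leq |w| - N$, I would factor $w = x y z$ with $|x| = L$ and $|y| = N$ (so $|z| \geq 0$). Reading $y$ one letter at a time starting from $q_0 := \delta(s_0, x)$ produces a chain of $N + 1$ states $q_0, q_1, \dots, q_N \in S$, where $q_\ell = \delta(s_0, x y')$ for $y'$ the length-$\ell$ prefix of $y$ (using the recurrence for $\delta$ from Definition \ref{automdef1}); since $|S| = N$, the pigeonhole principle yields indices $0 \leq i < j \leq N$ with $q_i = q_j$. I take $u_0$ to be $x$ followed by the length-$i$ prefix of $y$, take $v$ to be the block of $y$ occupying positions $i+1, \dots, j$ (which is nonempty because $i < j$), and take $u_1$ to be the remaining length-$(N-j)$ suffix of $y$ followed by $z$. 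Then $w = u_0 v u_1$, and $|u_0| = L + i = L + j - |v|$, so the required bounds $L \leq |u_0| \leq L + N - |v|$ follow from $i \geq 0$ and $j \leq N$.

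Finally I would verify the pumping conclusion: since $\delta(s_0, u_0) = q_i$ and $\delta(q_i, v) = q_j = q_i$, a trivial induction on $t$ gives $\delta(q_i, v^t) = q_i$ for all $t \in \NN_0$; hence $\delta(s_0, u_0 v^t u_1) = \delta(q_i, u_1)$ for every $t$, and therefore $a_{[u_0 v^t u_1]_k} = \tau(\delta(q_i, u_1))$ does not depend on $t$, as claimed. There is no genuine difficulty in this argument; the one point that deserves care is that the pumped words $u_0 v^t u_1$ may carry leading zeros, which is precisely why it is worth arranging at the outset for $\mathcal{A}$ to compute $a_{[u]_k}$ correctly on all of $\Sigma_k^*$ rather than only on the canonical representations $(n)_k$.
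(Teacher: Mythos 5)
Your proof is correct and is precisely the standard pigeonhole/pumping argument; the paper itself offers no proof but simply cites \cite[Lemma 4.2.1]{AlloucheShallit-book}, where this same argument appears. Your attention to the leading-zeros issue (arranging for the automaton to compute $a_{[u]_k}$ on all of $\Sigma_k^*$) is exactly the right point of care.
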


The final issue that we need to discuss is the dependence of the notion of $k$-automaticity on the base $k$. While the Thue--Morse sequence is $2$-regular, and is also easily seen to be $4$-regular, it is not $3$-regular. This follows from the celebrated result of Cobham \cite{Cobham-1969}. We say that two integers $k,l\geq 2$ are \emph{multiplicatively independent} if they are not both powers of the same integer (equivalently, $\log k/\log l \notin \Q$).

\begin{theorem}\label{automthm4}\cite[Theorem 11.2.2]{AlloucheShallit-book} Let $(a_n)_{n\geq 0}$ be a sequence with values in a finite set $\Omega$. Assume that the sequence $(a_n)$ is simultaneously $k$-automatic and $l$-automatic with respect to two multiplicatively independent integers $k,l\geq 2$. Then $(a_n)$ is eventually periodic.\end{theorem}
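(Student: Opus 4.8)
The plan is to reduce to the characteristic function of a single set and then play the two self-similar structures coming from the two bases off against each other. \textbf{Reduction.} A finite-valued sequence is eventually periodic precisely when each level set $E_\omega=\set{n}{a_n=\omega}$ is eventually periodic, and each $E_\omega$ is simultaneously $k$-automatic and $l$-automatic --- via the same automata as $(a_n)$, with the output map $\tau$ replaced by $s\mapsto\ifbra{\tau(s)=\omega}$. So it suffices to prove: if $E\subseteq\NN_0$ is simultaneously $k$- and $l$-automatic with $k,l$ multiplicatively independent, then $E$ is eventually periodic. We may assume that $E$ and its complement are both infinite, as otherwise there is nothing to do.

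\textbf{Where multiplicative independence enters.} Since $\log k/\log l\notin\QQ$, the subgroup $\ZZ\log k+\ZZ\log l$ is dense in $\RR$; hence for every $\e>0$ there are arbitrarily large $a,b\in\NN$ with $k^a$ and $l^b$ within a factor $1+\e$ of one another. Thus the powers of $k$ and the powers of $l$ can be matched to within any prescribed multiplicative error at arbitrarily large scale. This is the only use of the hypothesis on $k,l$, and it is what prevents the two pictures below from being genuinely independent.

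\textbf{Self-similarity and the contradiction.} Finiteness of the $k$-kernel (Proposition~\ref{automthm1}) says exactly that the rescaled blocks $n\mapsto\ifbra{jk^m+n\in E}$, for $0\le n<k^m$, fall --- as $j$ and $m$ vary --- into boundedly many ``types'', one per element of $\Nker_k(\ifbra{\cdot\in E})$, and the type of the $j$-th block depends on $j$ in a $k$-automatic fashion; likewise for $l$. (Some care with the most-significant-digit reading convention and with leading zeros is needed here, and is cleanest in the kernel formulation.) Suppose $E$ is not eventually periodic. Then for every $q\in\NN$ the defect set $D_q=\set{n}{\ifbra{n\in E}\neq\ifbra{n+q\in E}}$ is infinite, and each $D_q$ is again both $k$- and $l$-automatic, automatic sets being closed under shift and under Boolean operations. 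One then runs a pigeonhole argument: choose $m,m'$ with $k^m$ close to $l^{m'}$ (previous step) and both far larger than the two automaton sizes, locate a long sub-block on which the $k$-picture and the $l$-picture of $E$ simultaneously repeat, and use the density of the ratios $k^a/l^b$ to propagate this repetition across all of $\NN_0$; this forces $E$ to agree eventually with a periodic set of some explicit period $q$, i.e.\ $D_q$ is finite --- a contradiction. (An alternative, logical route: $E$ is $k$-automatic if and only if it is first-order definable in $\langle\NN,+,V_k\rangle$, where $V_k(n)$ is the largest power of $k$ dividing $n$; a set definable with both $V_k$ and $V_l$ for multiplicatively independent $k,l$ is definable in Presburger arithmetic $\langle\NN,+\rangle$, hence eventually periodic.)

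\textbf{Main obstacle.} The crux is the last step: converting ``boundedly many block-types at each of two multiplicatively independent geometric scales'' into genuine eventual periodicity. This is the heart of Cobham's original argument; the delicate points are bounding the number of types uniformly in the scale while handling the reading convention and leading zeros, arranging $k^m$ and $l^{m'}$ to overlap in a block long enough to force the two pictures to coincide, and bootstrapping a single long repetition into global periodicity. Since we only need the statement as a black box, we invoke it in the form \cite[Theorem~11.2.2]{AlloucheShallit-book}.
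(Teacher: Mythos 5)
This statement is Cobham's theorem; the paper offers no proof of its own, simply citing \cite[Theorem 11.2.2]{AlloucheShallit-book} (and indeed remarks that it will make no use of the result). Your proposal sketches the standard ingredients of Cobham's argument but, as you yourself acknowledge at the point where the real work would begin, it ultimately invokes that same citation as a black box, so it takes essentially the same approach as the paper.
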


We will have no use for Cobham's theorem. We will, however, use the following much easier related result.

\begin{theorem}\label{automthm5}\cite[Theorem 6.6.4]{AlloucheShallit-book} Let $(a_n)_{n\geq 0}$ be a sequences with values in a finite set $\Omega$. Let $k,l\geq 2$ be two multiplicatively \emph{dependent} integers. Then the sequence $(a_n)$ is $k$-automatic if and only if it is $l$-automatic.\end{theorem}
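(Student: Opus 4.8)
The plan is to reduce the statement to the case where one of the two bases is a power of the other. Since $k$ and $l$ are multiplicatively dependent, by definition we may fix an integer $c\geq 2$ and positive integers $a,b$ with $k=c^{a}$ and $l=c^{b}$. Thus it suffices to prove the following claim: for every $c\geq 2$ and every $j\geq 1$, a finite-valued sequence $(a_n)_{n\geq 0}$ is $c$-automatic if and only if it is $c^{j}$-automatic. Applying this claim twice, with $(c,j)=(c,a)$ and with $(c,j)=(c,b)$, shows that $k$-automaticity, $c$-automaticity, and $l$-automaticity are all equivalent, which is what we want. To prove the claim I would use the kernel criterion of Proposition~\ref{automthm1}: automaticity in base $m$ is equivalent to finiteness of the $m$-kernel $\Nker_m((a_n))$.

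Next I would introduce convenient notation: for $d\in\Sigma_{c}$ let $S_d$ be the operator sending a sequence $(x_n)_{n\geq 0}$ to $(x_{cn+d})_{n\geq 0}$. Writing the residue $r$ in base $c$ shows that $\Nker_{c}((a_n))$ is exactly the set of sequences obtained from $(a_n)$ by a finite (possibly empty) composition of operators $S_d$; equivalently, $\Nker_{c}((a_n))$ is the smallest set that contains $(a_n)$ and is closed under every $S_d$. Grouping the operators $j$ at a time, a composition of $j$ of them is precisely an operator $(x_n)\mapsto(x_{c^{j}n+e})$ with $e\in\Sigma_{c^{j}}$; hence $\Nker_{c^{j}}((a_n))$ is the set of sequences obtained from $(a_n)$ by compositions of $S_d$'s of length divisible by $j$. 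In particular $\Nker_{c^{j}}((a_n))\subseteq\Nker_{c}((a_n))$, so if $(a_n)$ is $c$-automatic then its $c^{j}$-kernel is finite and $(a_n)$ is $c^{j}$-automatic; this handles the easy direction.

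For the converse, suppose the $c^{j}$-kernel $\mathcal K:=\Nker_{c^{j}}((a_n))$ is finite (note that $(a_n)\in\mathcal K$). I would enlarge it by applying short shifts: let $\mathcal K'$ be the set of all sequences obtained from some element of $\mathcal K$ by a composition of operators $S_d$ of length at most $j-1$. Since $\mathcal K$ is finite and there are only finitely many such short compositions, $\mathcal K'$ is finite, and it contains $(a_n)$. The crucial verification is that $\mathcal K'$ is closed under each $S_d$: if $x'\in\mathcal K'$ is a composition of length $i\leq j-1$ applied to some $x\in\mathcal K$, then $S_d(x')$ is a composition of length $i+1\leq j$ applied to $x$; if $i+1<j$ this is again in $\mathcal K'$ by construction, and if $i+1=j$ then, since $x$ is itself obtained from $(a_n)$ by a composition of length divisible by $j$, the sequence $S_d(x')$ is obtained from $(a_n)$ by a composition of length divisible by $j$, hence lies in $\Nker_{c^{j}}((a_n))=\mathcal K\subseteq\mathcal K'$. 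Therefore $\mathcal K'$ is a finite set containing $(a_n)$ and closed under all $S_d$, so $\Nker_{c}((a_n))\subseteq\mathcal K'$ is finite and $(a_n)$ is $c$-automatic, completing the claim and hence the theorem.

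I do not anticipate a serious obstacle: the only real content is the bookkeeping in the converse direction, where one must pass between base $c^{j}$ and base $c$. Working with the kernel rather than with explicit automata is precisely what keeps this clean, since it sidesteps any fuss about leading zeros in the inputs or about reading digits most- versus least-significant-first.
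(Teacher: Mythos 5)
Your proof is correct: the reduction to a common base $c$ matches the paper's definition of multiplicative dependence, and the kernel bookkeeping in both directions (in particular, closing $\mathcal K$ under compositions of length $<j$ and checking closure under each $S_d$) is sound. The paper itself gives no proof — it cites \cite[Theorem 6.6.4]{AlloucheShallit-book} — and your argument via Eilenberg's kernel criterion is essentially the standard one found there, so there is nothing to flag.
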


Let $A$ denote a finite alphabet and let $L$ and $L'$ be languages, i.e., subsets of $A^*$. We denote by $L L' =\{wv \mid w\in L, v\in L'\}$ the concatenation of $L$ and $L'$. For an integer $i\geq 0$, we denote by $L^i = L \cdots L$ the concatenation of $i$ copies of $L$ with the understanding that $L^0 =\{\epsilon\}$. The \emph{Kleene closure} of $L$ is $L^*=\bigcup_{i\geq 0} L^i$. A language $L$  is \emph{regular} if it can be  obtained from the empty set and the letters of the alphabet using the operations of union, concatenation, and the Kleene closure.

Regular languages are intimately connected with automatic sequences via Kleene's theorem \cite{Kleene-1956} (see also \cite[Thm.\ 4.1.5]{AlloucheShallit-book}), which says that a language $L$ over the alphabet $\Sigma_k$  is regular if and only if the sequence  $(a_n)_{n\geq 0}$ given by $a_n=\ifbra{(n)_k \in L}$ is $k$-automatic. 

\subsection*{Dynamical systems}
An (invertible, topological) dynamical system is given by a compact metrisable space $X$ and a continuous homeomorphism $T\colon X\to X$. 
We say that $X$ is minimal if for every point $x\in X$ the orbit $\{T^n x \mid n\in \Z\}$ is dense in $X$. (Equivalently, the only closed subsets $Y\subset X$ such that $T(Y)= Y$ are $Y=X$ or $Y=\emptyset$.) We say that $X$ is totally minimal if the system $(X,T^n)$ is minimal for all $n\geq 1$. 

Let $(X,T)$ be a dynamical system. We say that a Borel measure $\mu$ on $X$ is invariant if for every Borel subset $A\subset X$ we have $\mu(T^{-1}(A))=\mu(A)$. By the Krylov--Bogoliubov theorem (see, e.g., \cite[Thm.\ 4.1]{EinsiedlerWard}), each dynamical system has at least one invariant measure. We say that a dynamical system in \emph{uniquely ergodic} if it has exactly one invariant measure. 

If $(X,T)$ is minimal, $x \in X$, and $U \subset X$ is open, then the set $\set{n \in \ZZ}{T^n x \in U}$ is syndetic, i.e., has bounded gaps \cite[Thm.\ 1.15]{Furstenberg1981}.

We will need the following standard consequence of the ergodic theorem \cite[Thm 4.10]{EinsiedlerWard}, which we also note in \cite[Corollary 1.4]{ByszewskiKonieczny2016}. (Below and elsewhere, $\delta S$ denotes the boundary of the set $S$.) 

\begin{corollary}\label{cor:density-uniform}
	Let $(X,T)$ be a uniquely ergodic dynamical system with the invariant measure $\mu$. Then for any $x \in X$ and any $S \subset X$ with $\mu(\partial S) = 0$, the set $E = \set{n \in \NN_0}{T^n x \in S}$  has upper Banach density $\mu(S)$.
\end{corollary}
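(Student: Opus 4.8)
\emph{Proof strategy.} The plan is to deduce the statement from the uniform version of the ergodic theorem for uniquely ergodic systems: for every $f \in C(X)$ the averages $\frac{1}{N}\sum_{n=0}^{N-1} f(T^n y)$ converge to $\int_X f \, d\mu$ \emph{uniformly} in $y \in X$. This is standard for uniquely ergodic systems and follows from \cite[Thm.\ 4.10]{EinsiedlerWard} together with a short compactness argument. The only obstacle is that $\mathbf{1}_S$ need not be continuous, and this is exactly what the assumption $\mu(\partial S) = 0$ is there to handle.

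First I would record the approximation step. Since $X$ is compact metrisable, the functions $y \mapsto \max(0, 1 - k\, d(y, \overline{S}))$, for $k \in \NN$, form a decreasing sequence in $C(X)$ converging pointwise to $\mathbf{1}_{\overline{S}}$, so by dominated convergence their integrals tend to $\mu(\overline{S})$. Applying the same construction to the closed set $X \setminus \operatorname{int} S$ produces an increasing sequence in $C(X)$ converging to $\mathbf{1}_{\operatorname{int} S}$ with integrals tending to $\mu(\operatorname{int} S)$. Since $\mu(\partial S) = 0$, we have $\mu(\overline{S}) = \mu(\operatorname{int} S) = \mu(S)$, so for every $\e > 0$ there exist $f_-, f_+ \in C(X)$ with $f_- \le \mathbf{1}_S \le f_+$, $\int_X f_- \, d\mu \ge \mu(S) - \e$ and $\int_X f_+ \, d\mu \le \mu(S) + \e$.

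Finally I would combine this with the uniform ergodic theorem and the elementary identity $\abs{E \cap [M, M+N)} = \sum_{n=0}^{N-1} \mathbf{1}_S(T^n(T^M x))$, valid for each $M \in \NN_0$, which expresses the count of $E$ in a window of length $N$ starting at $M$ as the $[0,N)$-ergodic sum of $\mathbf{1}_S$ evaluated at the point $T^M x$. For the upper bound, for every $M$ one has $\frac{1}{N}\abs{E \cap [M,M+N)} \le \sup_{y \in X} \frac{1}{N}\sum_{n=0}^{N-1} f_+(T^n y)$, and the right-hand side converges as $N \to \infty$ to $\int_X f_+ \, d\mu \le \mu(S) + \e$; taking $\limsup_N$ of the maximum over $M$ and then letting $\e \to 0$ gives $d^*(E) \le \mu(S)$. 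For the lower bound it suffices to consider the single window $M = 0$: $\frac{1}{N}\abs{E \cap [0,N)} \ge \frac{1}{N}\sum_{n=0}^{N-1} f_-(T^n x) \to \int_X f_- \, d\mu \ge \mu(S) - \e$, so $\liminf_N \frac{1}{N}\abs{E \cap [0,N)} \ge \mu(S)$, and a fortiori $d^*(E) \ge \mu(S)$. Together these yield $d^*(E) = \mu(S)$. The argument is routine; the one point requiring care is that the bound in the upper estimate be uniform in $M$, which is precisely why one needs the uniform, rather than the merely pointwise, form of the ergodic theorem.
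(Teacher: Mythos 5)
Your proof is correct and is exactly the standard argument the paper has in mind: it cites this as an immediate consequence of the uniform ergodic theorem for uniquely ergodic systems \cite[Thm.\ 4.10]{EinsiedlerWard} without writing out the details, and your sandwiching of $\mathbf{1}_S$ between continuous functions via $\mu(\partial S)=0$, combined with uniformity in the starting point, is precisely how that deduction goes. As a small bonus, your upper- and lower-bound estimates are in fact both uniform in $M$, which recovers the paper's follow-up remark that the $\limsup$ in the definition of upper Banach density can be replaced by a limit.
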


In fact, in this case the limit superior in the definition of upper Banach density can be replaced by a limit.

The connection between generalised polynomials and dynamics of  nilsystems has been intensely studied by Bergelson and Leibman in \cite{BergelsonLeibman2007} (see also \cite{Leibman-2012}). 
Nilsystems are a widely studied class of dynamical systems of algebraic origin. Here, we only need several properties which these systems enjoy; in particular, we shall spare the reader the definition of a nilsystem. A good introduction to nilsystems may be found in the initial sections of \cite{BergelsonLeibman2007}. 

A nilsystem $(X,T)$ is minimal if and only if it is uniquely ergodic; the unique invariant measure $\mu_X$ has then full support. If $(X,T)$ is minimal but not totally minimal, then $X$ splits into finitely many connected components $X_1,\dots, X_n$, each $X_i$ is preserved by $T^n$, and each $(X_i,T^n)$ is a totally minimal nilsystem. 

As a special case of the aforementioned connection between nilsystems and generalised polynomials \cite[Thm.\ A]{BergelsonLeibman2007}, we have the following result. (For more details, see also \cite{ByszewskiKonieczny2016}.)

\begin{theorem}[Bergelson--Leibman]\label{BLnilgenpolythm}\label{thm:BergelsonLeibman}
Let $g\colon \Z \to \R$ be a generalised polynomial taking finitely many values $\{c_1,\dots,c_r\}$. Then there exists a minimal nilsystem $(X,T)$  
as well as a point $z \in X$ and a partition $X = S_1 \cup S_2 \cup \ldots \cup S_r$ such that $\mu_X( \partial S_j) = 0$ and 
$$ g(n) = c_j \text{ if and only if } T^n z \in S_j $$
for each $1 \leq j \leq r$. 
\end{theorem}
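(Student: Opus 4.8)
The statement is due to Bergelson--Leibman, so one may simply invoke \cite[Thm.~A]{BergelsonLeibman2007} and observe that the ``finitely many values'' case we need is recovered by taking the $S_{j}$ to be the level sets of the representing function; for completeness I sketch how the underlying representation is proved. The plan is: \textbf{(i)} reduce to showing that every bounded generalised polynomial $g$ can be written as $g(n)=\Psi(T^{n}z)$ with $(X,T)$ a minimal nilsystem, $z\in X$, and $\Psi\colon X\to\R$ a bounded \emph{piecewise polynomial} function --- that is, in suitable (Mal'cev) coordinates $X$ is a finite union of semialgebraic pieces on each of which $\Psi$ is the restriction of a polynomial; \textbf{(ii)} produce such a representation by induction on the way $g$ is built up inside $\GP$, carrying unbounded subexpressions in an auxiliary form so that later applications of $\lfloor\cdot\rfloor$ can be processed; \textbf{(iii)} take care of the ``minimal'' requirement and of the possibility that the orbit meets a boundary set.

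For \textbf{(i)}: if $g(n)=\Psi(T^{n}z)$ with $\Psi$ piecewise polynomial, put $S_{j}:=\Psi^{-1}(c_{j})$. These sets partition $X$; on each semialgebraic piece $E_{\ell}$, where $\Psi$ agrees with a polynomial $q_{\ell}$, one has $S_{j}\cap E_{\ell}=E_{\ell}\cap\{q_{\ell}=c_{j}\}$, which is semialgebraic, so $S_{j}$ is semialgebraic and therefore $\partial S_{j}$ has lower dimension in the fundamental domain and $\mu_{X}(\partial S_{j})=0$; and $g(n)=c_{j}\iff T^{n}z\in S_{j}$ by construction. (Here one leans on Leibman's theorem that a translation of a nilmanifold acts by polynomial maps in Mal'cev coordinates, which makes ``semialgebraic subset'' and ``piecewise polynomial function'' coordinate-robust and stable under pullback by $T$.)

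For \textbf{(ii)}: the base case is an ordinary polynomial $p$, handled by the Furstenberg--Weyl device alluded to in the excerpt --- the orbit of a point under a unipotent affine automorphism of a torus $\mathbb{T}^{d}$, which is a nilsystem, has coordinates $(\{p_{1}(n)\},\dots,\{p_{d}(n)\})$ for suitable polynomials $p_{i}$, and after passing to the orbit closure (a sub-nilmanifold, minimal under $T$) one gets $\{p(n)\}=\psi(T^{n}z)$ with $\psi$ piecewise linear, whence $\lfloor p(n)\rfloor=p(n)-\psi(T^{n}z)$ as well. Closure under addition is formal: pass to a product of the nilsystems attached to the summands, then restrict to the orbit closure of the diagonal point to restore minimality; the semialgebraic pieces restrict to semialgebraic pieces with measure-zero boundary for the sub-Haar measure. \emph{The crux, which I expect to be the main obstacle, is closure under multiplication and the integer part.} The obstruction is structural: $\{\alpha n\}$ lives on a circle, but $\{\alpha^{2}n^{2}\}$ and $\{\alpha n\lfloor\beta n\rfloor\}$ live on no quotient of a circle, so multiplying and then flooring forces one to raise the nilpotency step of the nilmanifold (to a $2$-step torus, a Heisenberg nilmanifold, and so on), and a bounded generalised polynomial can be built through intermediate quantities that are unbounded and whose integer parts are not polynomials. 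Bergelson--Leibman resolve this by an intricate induction on a complexity measure of the bracket-expression, at each stage adjoining to the current nilmanifold the ``right'' torus- or Heisenberg-type factor carrying the fractional part of the next subexpression, so that every $\lfloor\cdot\rfloor$ is ultimately applied to a bounded quantity already represented on the current system. Once $\lfloor g\rfloor$ has been reduced to $\lfloor\Psi(T^{n}z)\rfloor$ with $\Psi$ bounded piecewise polynomial, $\lfloor\Psi\rfloor$ is again bounded piecewise polynomial, and its genuine discontinuity locus is contained in $\mathrm{disc}(\Psi)$ together with the boundaries of the sets $\{c\le\Psi<c+1\}$, $c\in\Z$; within each semialgebraic piece $\{q_{\ell}=c\}$ is either lower-dimensional, hence Haar-null, or has interior on which $q_{\ell}\equiv c$ and $\lfloor\Psi\rfloor$ is actually continuous --- so the discontinuity locus stays Haar-null, as the induction hypothesis demands.

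For \textbf{(iii)}: replace $X$ by the orbit closure of $z$ to make the system minimal. There remains the danger that the orbit $(T^{n}z)_{n}$ lands on the Haar-null set $\bigcup_{j}\partial S_{j}$, where $g(n)=c_{j}\iff T^{n}z\in S_{j}$ might fail. On an infinite minimal nilsystem the orbit of any point is injective, since $T^{d}z=z$ with $d\ge1$ would make $\{z,Tz,\dots,T^{d-1}z\}$ a finite non-empty closed $T$-invariant set and hence equal to $X$, contradicting infinitude; so each boundary point is visited at most once, and one may reassign the countably many visited boundary points among the pieces $S_{j}$ to make the equivalence hold for \emph{every} $n$, while noting that moving a countable set between the pieces does not enlarge $\bigcup_{j}\partial S_{j}$ beyond the original union of semialgebraic boundaries, so the property $\mu_{X}(\partial S_{j})=0$ survives. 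If instead $X$ is finite, it is a discrete cyclic rotation, $\partial S_{j}=\emptyset$, the representation forces $g$ to be periodic, and there is nothing further to check. This completes the plan.
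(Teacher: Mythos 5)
Your proposal takes the same route as the paper: this theorem is stated there purely as a quoted special case of \cite[Thm.~A]{BergelsonLeibman2007}, with no proof given, and your opening move --- invoking that theorem and taking the $S_j$ to be level sets of the representing piecewise polynomial function --- is exactly the intended reading. The appended sketch of the underlying induction is a reasonable bonus and honestly defers the genuinely hard step (closure under multiplication and the integer part) to the source, so nothing needs to be added or corrected.
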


\begin{remark}\label{rmrk:BLnilgenpolythm}
	Let $g\colon \Z \to \R$ be a generalised polynomial taking finitely many values. Then there exists $a \in \NN$ such that for any $b \in \ZZ$ the generalised polynomial $g_{a,b}(n) := g(an + b)$ has a representation as in Theorem \ref{BLnilgenpolythm} with $(X,T)$ totally minimal.
\end{remark} 
\section{Density 1 results}\label{sec:MAIN} 

\subsection*{Polynomial sequences}
Our first purpose in this section is to prove Theorem \ref{thm:main-sortof}. Recall that we aim to show that the sequence $n \mapsto \floor{ p(n) }$ is not regular if $p(x) \in \RR[x]$ has at least one irrational coefficient other than the constant term. We will show more, namely that the sequence $n \mapsto \floor{p(n)} \bmod{m}$ is not automatic for any $m \geq 2$. 
In fact, we will only need to work with the weaker property of weak periodicity, defined in the introduction. 

\begin{lemma}\label{lem:auto=>weak-per}
	Every automatic sequence is weakly periodic.
\end{lemma}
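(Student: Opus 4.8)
The plan is to prove the statement via the $k$-kernel characterization of automatic sequences (Proposition \ref{automthm1}), which tells us that a $k$-automatic sequence $f$ has a finite $k$-kernel $\Nker_k(f)$. Recall the definition of weak periodicity: for every restriction $f'(n) = f(an+b)$ of $f$ to an arithmetic progression, we must find $q \in \NN$ and $r \neq r'$ in $\NN_0$ with $f'(qn+r) = f'(qn+r')$ for all $n$.

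\begin{proof}
Let $f\colon \NN_0 \to \Omega$ be a $k$-automatic sequence with $\Omega$ finite, and fix a restriction $f'(n) = f(an+b)$ with $a \in \NN$, $b \in \NN_0$. We first observe that $f'$ is again automatic. Indeed, the $k$-kernel of $f'$ is contained in the set of all sequences of the form $n \mapsto f(a'n + b')$ where $a'$ ranges over the (finite) set of integers of the shape $a k^i$ for $i\ge 0$ reduced appropriately, and more precisely a standard computation shows that each sequence $(f'(k^\ell n + s))_n$ with $0\le s< k^\ell$ can be rewritten, after splitting according to the residue of the relevant index modulo a suitable power of $k$ dividing into the arithmetic along $a$, as a member of a finite set generated from $\Nker_k(f)$ together with finitely many shifts; hence $\Nker_k(f')$ is finite and $f'$ is $k$-automatic. (Alternatively, one may quote the well-known closure of the class of automatic sequences under taking arithmetic subsequences, e.g.\ \cite[Theorem 6.8.1]{AlloucheShallit-book}.) So it suffices to find, for an arbitrary automatic sequence $g$, some $q\in\NN$ and $r\ne r'$ with $g(qn+r)=g(qn+r')$ for all $n$; applying this to $g = f'$ gives the claim.

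Now let $g$ be $k$-automatic with finite kernel $\Nker_k(g)$. Consider the sequences $g_r := (g(k^\ell n + r))_{n\ge0}$ for a fixed $\ell$ and $0 \le r < k^\ell$. There are $k^\ell$ such sequences, all lying in the finite set $\Nker_k(g)$. Choosing $\ell$ large enough that $k^\ell > |\Nker_k(g)|$, the pigeonhole principle yields two distinct residues $r \ne r'$ in $\{0,1,\dots,k^\ell-1\}$ with $g_r = g_{r'}$, i.e.\ $g(k^\ell n + r) = g(k^\ell n + r')$ for all $n \in \NN_0$. Taking $q = k^\ell$ completes the argument.
\end{proof}

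The only genuinely delicate point is the first paragraph: verifying that the restriction of an automatic sequence to an arithmetic progression is again automatic (equivalently, has finite kernel). This is standard but does require a short argument reconciling the multiplicative structure of the kernel (powers of $k$) with the additive structure of the progression $an+b$; the cleanest route in the write-up is simply to invoke the closure property from \cite{AlloucheShallit-book} rather than reprove it. Everything after that is an immediate pigeonhole on the finite kernel, so I expect no obstacle there.
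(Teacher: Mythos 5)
Your proof is correct and follows essentially the same route as the paper's: invoke closure of automatic sequences under restriction to arithmetic progressions (\cite[Theorem 6.8.1]{AlloucheShallit-book}) to reduce to finding $q, r \neq r'$ for $f$ itself, then apply the pigeonhole principle to the finite $k$-kernel with $k^\ell > |\Nker_k(f)|$. The hand-waved kernel computation in your first paragraph is unnecessary (and too vague to stand on its own), but since you also cite the standard closure result, the argument is complete.
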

\begin{proof}
	Let $f$ be a $k$-automatic sequence. Since the restriction of a $k$-automatic sequence to an arithmetic progression  is again $k$-automatic \cite[Theorem 6.8.1]{AlloucheShallit-book}, it will suffice to find $q \in \NN$ and $r,r' \in \NN_0$ with $r \neq r'$ such that $f(qn + r) = f(qn+r')$.
	
	The $k$-kernel $\Nker_k(f)$ of $f$, consisting of the functions $f(k^t n + r)$ for $0 \leq r < k^t$, is finite. Pick $t$ sufficiently large that $k^t > \abs{\Nker_k(f)}$. By the pigeonhole principle, there exist $r \neq r'$ such that $f(k^t n + r) = f(k^t n + r')$. 
\end{proof}
	
The proof of the following proposition is closely analogous to Furstenberg's proof \cite{Furstenberg61} of Weyl's equidistribution theorem \cite{Weyl} (see also \cite[Section 4.4.3]{EinsiedlerWard}).

\begin{proposition}\label{prop:poly=>not-wp}
Let $p(x) \in \RR[x]$ be a polynomial, and let  $m \geq 2$ be an integer. Then the sequence $(\floor{p(n)}\bmod{m})_{n\geq 0}$ is weakly periodic if and only if it is periodic. This happens precisely when all non-constant coefficients of $p(x)$ are rational.
\end{proposition}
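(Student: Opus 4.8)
The plan is to prove the three implications: weak periodicity of $f(n) := \floor{p(n)}\bmod m$ forces all non-constant coefficients of $p$ to be rational; this in turn forces $f$ to be periodic; and periodicity trivially implies weak periodicity. The last implication is immediate, since a sequence of period $Q$, and each of its restrictions, satisfies the definition with $q = Q$, $r = 0$, $r' = Q$. The middle one is elementary: if $Q \in \NN$ is a common denominator of the non-constant coefficients of $p$, a binomial expansion shows $p(n + Qm) - p(n) \in m\ZZ$ for every $n$, hence $\floor{p(n + Qm)} \equiv \floor{p(n)} \pmod m$ and $Qm$ is a period of $f$. So everything reduces to the first implication, in contrapositive form: \emph{if $p$ has an irrational non-constant coefficient, then $f$ is not weakly periodic}.

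For this it suffices to examine the trivial restriction $f' = f$, i.e. to show there are no $q \in \NN$ and $0 \le r < r'$ with $\floor{p(qn+r)} \equiv \floor{p(qn+r')}\pmod m$ for all $n$. Suppose such $q, r, r'$ exist, and set $P(x) := p(qx+r)/m$, $P'(x) := p(qx+r')/m$. The elementary equivalence $\floor{mt}\bmod m = k \iff \{t\} \in [k/m,(k+1)/m)$ translates the hypothesis into the statement that the polynomial orbit $n \mapsto (\{P(n)\},\{P'(n)\})$ in $\TT^2$ never leaves the closed set $\bar D := \bigcup_{k=0}^{m-1} \bar I_k \times \bar I_k$, where $\bar I_k := [k/m,(k+1)/m]$; note $\bar D$ has Haar measure $1/m < 1$. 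Put $H := P' - P$; its degree is $< \deg p$, since the leading terms cancel. Let $j \ge 1$ be the largest index with the coefficient of $x^j$ in $p$ irrational. A short binomial computation establishes: (a) $P$ and $P'$ have an irrational coefficient at $x^j$; (b) $H$ has a \emph{rational} coefficient at $x^j$; (c) $H$ has an irrational coefficient at $x^{j-1}$; and, in the case $j = 1$, all non-constant coefficients of $H$ are rational while $H(0) = (p(r') - p(r))/m$ is irrational.

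Now split on $j$. If $j \ge 2$: from (a)--(c) one checks that for every $(a,b)\in\ZZ^2\setminus\{0\}$ the polynomial $aP + bP' = (a+b)P + bH$ has an irrational non-constant coefficient — inspect the coefficient of $x^j$ when $a + b \ne 0$, and, when $a + b = 0$, note that $-aH$ has an irrational coefficient at $x^{j-1}$, a non-constant monomial because $j \ge 2$. Weyl's equidistribution theorem — whose needed case is proved exactly along the lines of Furstenberg's skew-product argument, as in the proof of Theorem \ref{thm:main-sortof} — then shows $n \mapsto (\{P(n)\},\{P'(n)\})$ equidistributes, hence is dense, in $\TT^2$, contradicting that it stays in $\bar D$. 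If $j = 1$: $\{H(n)\}$ is periodic in $n$, say with period $N$ and constant value $v_{n_0}$ on $n \equiv n_0 \pmod N$; on that residue class $\{P'(n)\} = \{P(n) + v_{n_0}\}$, and $k \mapsto \{P(Nk+n_0)\}$ equidistributes in $\TT$ (it still carries an irrational non-constant coefficient), so the whole coset $\{(y, y + v_{n_0}) : y \in \TT\}$ lies in the closed set $\bar D$. A short slicing argument — for $y$ interior to $\bar I_k$ one must have $y + v_{n_0}\in\bar I_k$, and letting $y$ approach either endpoint forces $v_{n_0}\equiv 0\pmod 1$ — then gives $H(n)\in\ZZ$ for all $n$, contradicting that $H(0)$ is irrational.

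Two points carry the weight. First, the coefficient computation (b)--(c): the decisive feature is that passing from $p$ to the difference $p(qx+r') - p(qx+r)$ \emph{shifts the top irrational coefficient down by exactly one degree}, which is precisely why the borderline case $j = 1$ — where that coefficient has slipped into the constant term — must be isolated and handled separately; this is the conceptual heart of the argument. Second, one needs the equidistribution of polynomial orbits on the torus in Furstenberg's style, together with the trivial topological remark that a non-trivial coset of the diagonal of $\TT^2$ cannot be contained in $\bar D$. Finally, I note that replacing periodicity by weak periodicity costs nothing here, as the argument uses only the trivial restriction $f' = f$.
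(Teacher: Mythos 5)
Your proof is correct, but it takes a genuinely different route from the paper's. The paper first uses the stability of weak periodicity under passage to subprogressions to reduce to the case where the \emph{leading} coefficient of $p$ is irrational, then encodes $p(n)/m$ as an orbit of a totally minimal skew product on $\TT^d$ and derives a contradiction purely topologically: the relation $f(qn+r)=f(qn+r')$ forces $T^{r'-r}(\cl A)\subset\cl A$ for the slab $A=\TT^{d-1}\times[l/m,(l+1)/m)$, which is impossible by total minimality. You instead keep the original polynomial, pass to the pair $(P,P')=(p(qx+r)/m,\,p(qx+r')/m)$, and argue via joint Weyl equidistribution in $\TT^2$ that the orbit cannot stay in the measure-$1/m$ closed set $\bar D$; the price is the case split on the position $j$ of the top irrational coefficient, needed because forming $H=P'-P$ pushes that coefficient down one degree, so for $j=1$ equidistribution in $\TT^2$ fails and you must fall back on the coset/arc-translation argument (which is, in effect, the $d=1$ instance of the paper's minimality argument). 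Your coefficient computations (a)--(c), the verification of the Weyl criterion for all $(a,b)\neq(0,0)$ when $j\ge 2$, and the slicing argument for $j=1$ all check out, as does the observation that refuting weak periodicity only requires exhibiting one bad restriction, namely $f$ itself. What the paper's formulation buys is uniformity (no case analysis, and the same lemma — Lemma \ref{lem:tot-min=>not-auto} — then generalises verbatim to arbitrary generalised polynomials via Bergelson--Leibman); what yours buys is a more self-contained and computational argument resting only on the classical Weyl theorem, at the cost of being tailored to genuine polynomials.
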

\begin{proof}
	If all coefficients of $p(x)$ are rational (except possibly for the constant term) then the sequence  $(\floor{p(n)}\bmod{m})$ is easily seen to be periodic, hence weakly periodic. 

	Now suppose that at least one non-constant coefficient of $p(x)$ is irrational. Replacing $p(x)$ with $p(h x + r)$ for multiplicatively large $h$ and $r=0,1,\ldots,h-1$, we may assume that the leading coefficient of $p(x)$ is irrational. We will prove marginally more than claimed, namely that for any $0 \leq l < m$ the sequence $f$ given by 
	\begin{equation}\label{eq:104}
	f(n) = \ifbra{\floor{p(n)} \equiv l \pmod{m}}
	\end{equation}
	fails to be weakly periodic. For a proof by contradiction, suppose this claim is false for some choice of $l$. 
	
	It will be convenient to expand $p(x)/m = \sum_{i = 0}^d a_i \binom{x}{i}$, where $d = \deg p$, $a_i\in \R$, and $\binom{x}{i} =  x(x-1)(x-2) \cdots (x-i+1)/i!$. Note that $a_d \in \RR \setminus \QQ$ and
	\begin{equation}\label{eq:103}	 f(n)=\ifbra{\frac{p(n)}{m} \bmod{1} \in \left[ \frac{l}{m}, \frac{l+1}{m}\right)}. \end{equation}
	
	We will represent the sequence $p$ dynamically. Let $X$ be the $d$-dimensional torus $\TT^d$ and define the self-map $T \colon X \to X$ by 
	\begin{equation}\label{eq:100}
			(x_1,x_2,x_3,\dots,x_d) \mapsto (x_1 + a_d, x_2 + x_1 + a_{d-1}, 
		 \dots, x_d + x_{d-1} + a_1).
	\end{equation}	
	Put $a_j =0$ for $j>d$. A direct computation shows that for $z = (0,0,\dots,0,a_0)$ and $j=1,\ldots,d$ we have
	\begin{equation}\label{eq:101}
		(T^n z)_j = z_j + \sum_{i \geq 1} a_{d-j+i} \binom{n}{i},
	\end{equation}
and in particular $(T^n z)_d = p(n)/m.$ Putting $A = \TT^{d-1} \times \left[ \frac{l}{m}, \frac{l+1}{m}\right)$, we thus find that 
	\begin{equation}\label{eq:102}
		f(n) = \ifbra{ T^n z \in A}.
	\end{equation}

Since $f$ is weakly periodic, we may find $q$ and $r \neq r'$ such that $f(qn + r) = f(qn + r')$.

	The dynamical system $(X,T)$ can be obtained as a sequence of iterated group extension over an irrational rotation, and hence is totally minimal (this follows easily from the results in, e.g., \cite[Section 4.4.3]{EinsiedlerWard}). In particular, for any point $y \in \cl A$ we may find a sequence $(n_i)_{i \geq 0}$ such that $T^{q n_i + r} z \to y$ and $T^{q n_i + r} z \in A$. It follows that the points $T^{q n_i + r'} z$ converge to $T^{r'-r}y$ and lie in $A$. Thus, $T^{r'-r} (\cl A) \subset \cl A$. In light of total minimality of $T$, this is only possible if $\cl A = X	$ or $\cl A = \emptyset$ --- but this is absurd.
\end{proof}

\begin{corollary}\label{prop:poly=>not-auto}
	With the notation of Proposition \ref{prop:poly=>not-wp}, the sequence $n \mapsto \floor{p(n)} \bmod{m}$ is automatic if and only if it is periodic, and if and only if all the non-constant coefficients of $p(x)$ are rational.
\end{corollary}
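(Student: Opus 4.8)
The plan is to obtain Corollary \ref{prop:poly=>not-auto} as a formal consequence of Proposition \ref{prop:poly=>not-wp} and Lemma \ref{lem:auto=>weak-per}, the only genuinely new ingredient being the elementary observation that a periodic sequence is automatic in every base.

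First I would record what Proposition \ref{prop:poly=>not-wp} already delivers: for the sequence $f \colon n \mapsto \floor{p(n)} \bmod{m}$, the three conditions ``$f$ is weakly periodic'', ``$f$ is periodic'', and ``every non-constant coefficient of $p(x)$ is rational'' are equivalent. Hence it suffices to splice ``$f$ is automatic'' into this list of equivalent conditions. The implication from automatic to weakly periodic is exactly Lemma \ref{lem:auto=>weak-per}, so combining it with Proposition \ref{prop:poly=>not-wp} yields ``$f$ automatic $\Rightarrow$ $f$ periodic $\Rightarrow$ all non-constant coefficients of $p(x)$ rational''.

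For the converse direction I would show that a periodic sequence $(a_n)_{n \geq 0}$ of period $q$ is $k$-automatic for every $k \geq 2$: each element $(a_{k^t n + r})_{n \geq 0}$ of the $k$-kernel $\Nker_k((a_n))$ is again periodic with period dividing $q$ and depends only on the residues $k^t \bmod q$ and $r \bmod q$, so there are at most $q^2$ distinct such subsequences; the kernel is therefore finite and Proposition \ref{automthm1} applies. This closes the cycle of implications and establishes the corollary.

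I do not expect any real obstacle here, since all the analytic content resides in Proposition \ref{prop:poly=>not-wp}, which is already proved; the only points requiring a modicum of care are that ``periodic'' (rather than merely ``eventually periodic'') already suffices for automaticity, and that the equivalence holds uniformly in the base $k$, so that the statement may be phrased, as it is, without reference to a fixed $k$.
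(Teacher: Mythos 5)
Your proposal is correct and follows exactly the route the paper takes: the paper's proof is the one-line "Immediate from Proposition \ref{prop:poly=>not-wp} and Lemma \ref{lem:auto=>weak-per}." Your additional verification that a periodic sequence is $k$-automatic for every $k$ (via finiteness of the $k$-kernel) is a standard fact the paper leaves implicit, and your argument for it is sound.
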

\begin{proof}
	Immediate from Proposition \ref{prop:poly=>not-wp} and Lemma \ref{lem:auto=>weak-per}.
\end{proof}

\begin{proof}[Proof of Theorem \ref{thm:main-sortof}]
	Suppose first that all non-constant coefficients of $p(n)$ are rational, and fix an integer $k\geq 2$. Let $h \in \NN$ be such that $h p(n)$ has integer coefficients,  except possibly for the constant term. Then $f_1(n) = \floor{ h p(n) }$ is an integer-valued polynomial, hence is $k$-regular ($\Nker_k(f_1)$ is contained in the $(\deg p + 1)$-dimensional $\ZZ$-module consisting of integer-valued polynomials of degree $\leq \deg p$). Also, $f_2(n) = \floor{ h p(n) } - h f(n) = \floor{h \fp{p(n)}}$ is periodic, hence $k$-automatic, hence $k$-regular. It follows that $f(n) = \frac{1}{h} \bra{ f_1(n) - f_2(n) }$ is regular.

	Conversely, suppose that $f(n)$ is regular. Then by Theorem \ref{automthm2} for any choice of $m \geq 2$ the sequence $f(n) \bmod{m}$ is automatic. Now, it follows from Corollary \ref{prop:poly=>not-auto} that all non-constant coefficients of $p(x)$ are rational. 
\end{proof}

\subsection*{Generalised polynomials}
Having dealt with the case of polynomial maps, we move on to a more general context. Our next goal is to prove Theorem \ref{thm:main-weakly-periodic}. We begin by abstracting and generalising some of the key steps from the proof of Theorem \ref{thm:main-sortof}.

 Recall that a set of integers is \emph{thick} if it contains arbitrarily long segments of consecutive integers, and \emph{syndetic} if it has bounded gaps; every thick set intersects every syndetic set. 

\begin{lemma}\label{lem:tot-min=>not-auto}
	Let $(X,T)$ be a totally minimal dynamical system. Let $A \subset X$ be a set which is neither empty nor dense and such that  $\cl A = \cl \inter A$. Let $z \in X$. Suppose that $f \colon \NN_0 \to \{ 0, 1\}$ is a sequence such that the set of $n$ with $f(n) = \ifbra{T^n z \in A}$ is thick. Then $f$ is not weakly periodic. 
\end{lemma}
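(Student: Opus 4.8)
The plan is to mimic the argument from Proposition~\ref{prop:poly=>not-wp}, replacing the concrete skew-product torus by an abstract totally minimal system. Suppose for contradiction that $f$ is weakly periodic. Applying the definition of weak periodicity to the restriction $f'(n) = f(n)$ itself (i.e.\ taking $a = 1$, $b = 0$), we obtain $q \in \NN$ and $r \neq r' \in \NN_0$ such that $f(qn + r) = f(qn + r')$ for all $n$. The idea is to combine this identity with the hypothesis that $f(n) = \ifbra{T^n z \in A}$ on a thick set to force a nontrivial self-map relation $T^{s}(\cl A) \subset \cl A$ for $s = r' - r \neq 0$, which contradicts total minimality since $A$ is neither empty nor dense.

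First I would set up the key consequence of thickness: because $S := \set{n}{f(n) = \ifbra{T^n z \in A}}$ is thick, for every $N$ the set $S$ contains an interval of length $N$, and in particular $S$ contains, for every residue and modulus we care about, long runs within a single congruence class mod $q$. Concretely, fix any point $y \in \cl A$. Using minimality of $(X, T^q)$ (which holds since $(X,T)$ is totally minimal) and the fact that $\cl A = \cl \inter A$, I can find infinitely many $n$ with $T^{qn + r} z$ arbitrarily close to $y$ and, after a small perturbation of the index within a thick block, actually lying in $\inter A$; the thickness of $S$ lets me insist simultaneously that $qn + r$, and indeed a whole block around it (hence also $qn + r'$ when $r' $ and $r$ lie in the same block), belong to $S$. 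Then $f(qn+r) = \ifbra{T^{qn+r} z \in A} = 1$, and by the weak-periodicity identity $f(qn + r') = f(qn + r) = 1$; since $qn + r' \in S$ as well, this yields $T^{qn + r'} z \in A$. Passing to a subsequence along which $T^{qn+r} z \to y$ gives $T^{qn + r'} z = T^{s}(T^{qn+r} z) \to T^{s} y$, and each such point lies in $A$, so $T^{s} y \in \cl A$. As $y \in \cl A$ was arbitrary, $T^{s}(\cl A) \subset \cl A$.

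The remaining step is standard: $\cl A$ is a nonempty closed set (nonempty since $A \neq \emptyset$) with $T^{s}(\cl A) \subset \cl A$; replacing $s$ by $|s|$ if necessary and noting $s \neq 0$, minimality of $(X, T^{s})$ forces $\cl A = X$, i.e.\ $A$ is dense, contradicting the hypothesis. One subtlety is that $T^{s}(\cl A) \subset \cl A$ gives a forward-invariant closed set, and minimality a priori concerns $Y$ with $T^{s}(Y) = Y$; but by compactness $\bigcap_{j \geq 0} T^{sj}(\cl A)$ is a nonempty closed set that is genuinely $T^{s}$-invariant and contained in $\cl A \subsetneq X$, so its existence already contradicts minimality of $(X, T^{s})$. (Alternatively, one observes that a proper closed forward-invariant set cannot meet every orbit, again contradicting minimality.)

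The main obstacle I anticipate is the bookkeeping in the thickness argument: I need to arrange a single value of $n$ for which $qn + r$ and $qn + r'$ both lie in $S$ \emph{and} $T^{qn+r} z$ is close to the chosen $y \in \cl A$ while lying in $A$. Thickness gives arbitrarily long windows in $S$; choosing the window length larger than $|r - r'|$ and positioning it appropriately handles the ``both in $S$'' requirement, but I must interleave this with the recurrence of the orbit of $z$ under $T^q$ near $y$. The clean way is: by minimality of $(X,T^q)$ the set $\set{n}{T^{qn+r} z \in \inter A}$ is syndetic (Furstenberg, as quoted in the excerpt), hence intersects the thick set of $n$ for which a window of length $>|r-r'|$ around $qn+r$ lies in $S$; this intersection is nonempty and in fact infinite, and along it one extracts the convergent subsequence $T^{qn+r} z \to y$ after first covering $\inter A$ by finitely many small balls and pigeonholing. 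This syndetic-meets-thick trick, already invoked in the background section, is exactly what makes the step go through without delicate estimates. Here $z$ ranges over the given point and $A$, $y$ as above; no further genericity of $z$ is needed.
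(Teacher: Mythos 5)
Your proposal is correct and follows essentially the same route as the paper: derive $q$, $r\neq r'$ from weak periodicity, use the syndetic-meets-thick intersection (minimality of $(X,T^q)$ applied to a nonempty open subset of $\inter A$, against the thick agreement set pulled back along $n\mapsto qn+r$ and $n\mapsto qn+r'$) to force $T^{r'-r}(\cl A)\subset \cl A$, and contradict total minimality. The only cosmetic difference is that the paper establishes $T^{d}(\inter A)\subset \cl A$ and then invokes $\cl A=\cl\inter A$ plus continuity, whereas you approximate an arbitrary $y\in\cl A$ directly by orbit points in $W\cap\inter A$ for shrinking neighbourhoods $W$ of $y$ (your final ``cover by balls and pigeonhole'' remark is unnecessary for this and should be dropped in favour of that direct choice of $W$); your extra care with forward- versus two-sided invariance at the end is sound.
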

\begin{proof}
	Suppose for the sake of contradiction that $f$ is weakly periodic. In particular there exist $q \in \NN$, $r,r' \in \NN_0$ with $r \neq r'$ such that $f(qn + r) = f(qn + r')$. Put $d = r'-r$. 
		
	We will show that  $T^{d} (\cl A) \subset \cl A$. Since $T$ is continuous and $\cl \inter A = \cl A$, it will suffice to prove that $T^{d} (\inter A)  \subset \cl A$. Once this is accomplished, the contradiction follows immediately, because $(X,T^{d})$ is minimal, while $\cl A \neq \emptyset,X$.

	Pick any $y \in \inter A$ and an open neighbourhood $V$ of $T^{d}y$; we aim to show that $V \cap A \neq \emptyset$. Put $U = T^{-d} V \cap \inter A$, and consider the set $S$ of those $n$ for which $T^{q n + r}z \in U$. Since $(X,T^{q})$ is minimal and $U \neq \emptyset$, the set $S$ is syndetic. Let $R_0$ be the set of those $n$ for which $f(n) = \ifbra{T^n z \in A}$ and put $R = \{n\in \NN_0 \mid qn+r \in R_0\}$ and $R' = \{n\in \NN_0 \mid qn+r' \in R_0\}$. 
	
	Since $R_0$ is thick, so is $R \cap R'$. Since $S$ is syndetic, $S \cap R \cap R'$ is non-empty. Pick any $n \in S \cap R \cap R'$ and put $x = T^{qn + r} z$. Since $n \in S$, we have $x \in U \subset A$, and so $T^d x \in V$. Since $n \in R$, we have $f(qn +r) = \ifbra{x \in A} = 1$, and hence also $f(qn + r') = 1$. Finally, since $n \in R'$, we have $1 = f(q n + r') = \ifbra{T^{d} x \in A}$, meaning that $T^{d} x \in V \cap A$. In particular, $ V \cap A \neq \emptyset$, which was our goal.
	\end{proof}

\begin{remark}
	Some mild topological restrictions on the target set $A$ are, of course, necessary in the above lemma. Note that any open, non-dense and non-empty subset of $X$ will satisfy the stated assumptions.
	
	The assumption that the map $T$ is totally minimal is essential. Indeed, take $X$ to be the Thue--Morse shift, i.e., the closed orbit under the shift map of the Thue--Morse sequence. Let \begin{align*} A&=\{(a_n)_{n\in \N_0}\in X \mid a_{2k}=a_{2k+1} \text{ for some }  k\in \N_0\},  \\  B&=\{(a_n)_{n\in \N_0} \in X\mid a_{2k+1}=a_{2k+2} \text{ for some } k\in \N_0\}.\end{align*} Since the Thue--Morse sequence $(t_n)$ has the property $t_{2n} \neq t_{2n+1}$ for all $n$ and since the Thue--Morse word contains no cubes (i.e., no occurences of factors of the form $www$ with $w\in\Sigma_k^*$, $w\neq \epsilon$), we see that $A\cap B \neq \emptyset$, $X=A\cup B$ and $A$ and $B$ are clopen. Let $z=(t_n) \in X$ be the Thue--Morse sequence. Then the function $f(n)=\ifbra{T^n z \in A}$ is periodic with period $2$, and while $X$ is minimal, it is not totally minimal.
\end{remark}

The analogue of the representation of a polynomial sequence using a skew rotation on the torus in \eqref{eq:102} is provided by the Bergelson--Leibman Theorem \ref{thm:BergelsonLeibman}. We are now ready to state and prove the main result of this section, from which Theorem \ref{thm:main-weakly-periodic} easily follows.

\begin{theorem}\label{thm:gen-poly=>not-auto-dens-1}
	Let $g \colon \ZZ \to \RR$ be a generalised polynomial taking finitely many values, and let $f\colon \NN_0 \to \RR$ be a weakly periodic sequence which agrees with $g$ on a thick set $R \subset \NN_0$. Then there exists a set $Z \subset R$ with $d^*(Z) = 0$ such that the common restriction of $f$ and $g$ to $R \setminus Z$ is periodic.	
\end{theorem}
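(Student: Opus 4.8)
The plan is to leverage Theorem~\ref{thm:BergelsonLeibman} to represent $g$ dynamically and then apply Lemma~\ref{lem:tot-min=>not-auto} componentwise. First I would invoke Theorem~\ref{thm:BergelsonLeibman} to obtain a minimal nilsystem $(X,T)$, a point $z\in X$, and a partition $X = S_1\cup\dots\cup S_r$ with $\mu_X(\partial S_j)=0$ such that $g(n)=c_j$ exactly when $T^n z\in S_j$. Since a minimal nilsystem need not be totally minimal, I would pass to the connected components: $X$ splits into finitely many clopen pieces $X_1,\dots,X_s$ cyclically permuted by $T$, with each $(X_i, T^s)$ a totally minimal nilsystem. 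Refining the partition by intersecting with the $X_i$, I get a partition of $X$ into sets $S_{j,i} = S_j\cap X_i$, each with $\mu_{X}(\partial S_{j,i})=0$, and each a subset of a totally minimal nilsystem under an appropriate power of $T$.

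The crucial point is to get from "measure-zero boundary" to the topological regularity hypothesis $\cl A = \cl(\inter A)$ required by Lemma~\ref{lem:tot-min=>not-auto}. Here is where I expect the main technical content to lie. For a set $S$ with $\mu_X(\partial S)=0$ in a nilsystem, $\mu_X$ has full support, so $\mu_X(\inter S) = \mu_X(\cl S) = \mu_X(S)$; full support then gives $\cl(\inter S)\supset \cl S$ minus... — more carefully, one should decompose $X$ into the topologically regular open set $\inter(\cl S)$ and its complement, and handle the boundary stratum, which has measure zero, separately. The clean way: let $Z_0 = \{n : T^n z \in \bigcup_j \partial S_j\}$. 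By Corollary~\ref{cor:density-uniform} applied to $\partial S = \bigcup_j\partial S_j$ (which has $\mu_X$-measure zero, and whose own boundary is itself, still measure zero), $Z_0$ has upper Banach density zero. Off $Z_0$, every point $T^n z$ lies in the interior of its part. So I replace each $S_j$ by $\inter S_j$; the orbit avoiding $Z_0$ sees no difference, and now I am working with genuinely open sets, for which $\cl A = \cl(\inter A)$ is automatic (as noted in the Remark after Lemma~\ref{lem:tot-min=>not-auto}).

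Next I would run the contrapositive-style argument of Lemma~\ref{lem:tot-min=>not-auto}. Fix a value $c_j$; I want to show $f^{-1}(c_j)$ and $g^{-1}(c_j)$ agree on $R$ off a Banach-density-zero set. Suppose $\inter S_j$ is neither empty nor dense in its component $X_i$; set $A = \inter S_j$, viewed inside the totally minimal system $(X_i, T^s)$. The sequence $n\mapsto f(ns + i_0)$ (for the appropriate residue $i_0$ so that the orbit stays in $X_i$) is weakly periodic, being a restriction of the weakly periodic $f$, and it agrees with $\ifbra{T^{ns+i_0}z\in A}$ on a thick set — namely the intersection of the thick set $R$ (restricted to that progression, still thick after intersecting with the thick complement of $Z_0$) with the arithmetic progression. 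Lemma~\ref{lem:tot-min=>not-auto} then yields a contradiction. Hence for each $j$, $\inter S_j$ is either empty or dense in its component; combined with $\mu_X(\partial S_j)=0$ and full support, this forces each $\cl S_j$ to be a union of components, i.e. $g$ restricted to each arithmetic progression $\{ns + i_0\}$, off $Z_0$, is constant — so $g$ restricted to $R\setminus Z_0$ is periodic with period $s$.

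Finally I would assemble: set $Z = (R\cap Z_0)$; then $Z\subset R$, $d^*(Z)\le d^*(Z_0)=0$, and on $R\setminus Z$ we have $f = g$ (since $R$ is where they agree — wait, $f$ and $g$ agree on all of $R$ by hypothesis) and $g$ is periodic; so the common restriction of $f$ and $g$ to $R\setminus Z$ is periodic, as required. The main obstacle, as flagged, is the careful bookkeeping linking the measure-theoretic smallness of boundaries (via Corollary~\ref{cor:density-uniform}) with the topological hypotheses of Lemma~\ref{lem:tot-min=>not-auto}, together with correctly tracking the passage to connected components and arithmetic subprogressions so that thickness and weak periodicity are preserved at each step; everything else is a direct application of the tools already assembled.
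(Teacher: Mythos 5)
Your proposal is correct and follows essentially the same route as the paper's proof: the Bergelson--Leibman representation, reduction to the totally minimal case (you via the connected components, the paper via Remark \ref{rmrk:BLnilgenpolythm}, which amounts to the same thing), removal of the boundary-hitting times via Corollary \ref{cor:density-uniform}, and an application of Lemma \ref{lem:tot-min=>not-auto} to each level set $\ifbra{f=c_j}$ to force each $\inter S_j$ to be empty or dense in its component. Your bookkeeping of thickness and weak periodicity along arithmetic subprogressions is exactly the content of the paper's reduction to $g(an+b)$, so the two arguments coincide in substance.
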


\begin{proof}
	Let the minimal nilsystem $(X,T)$, $z \in X$, and a partition $X = \bigcup_{j=1}^r S_j$ be as in Theorem \ref{thm:BergelsonLeibman}, so that in particular
	\begin{equation}\label{eq:107}
	g(n) = \sum_{j=1}^r \ifbra{T^n z \in S_j} c_j.
	\end{equation}	
	
	If $X$ is not totally minimal, then (as in Remark \ref{rmrk:BLnilgenpolythm}) we may find $a \in \NN$ such that for any $b \in \ZZ$, $g'_{a,b}(n) = g(an + b)$ has a representation as in \eqref{eq:107} on a totally minimal nilsystem. Clearly, $f'_{a,b}(n) = f(an + b)$ is weakly periodic and agrees with $g'_{a,b}(n)$ on the thick set $R'_{a,b} = \{ n \mid an+b \in R\}$. Thus, it will suffice to prove the theorem under the additional assumption that $(X,T)$ is totally minimal.

	We may write 
	\begin{equation}\label{eq:108}
g(n) = \sum_{j=1}^r \ifbra{ T^n z \in \inter S_j}c_j + h(n),
	\end{equation}	
 where $h(n) = 0$ unless $T^n z \in \bigcup_{j=1}^r \partial S_j$. In particular (by Corollary \ref{cor:density-uniform}), the set $Z \subset \NN_0$ of $n$ with $h(n) \neq 0$ has upper Banach density $0$. Note that $R \setminus Z$ is then thick. 
	
	For $j \in \{1,\ldots,r\}$, put $g_j'(n) = \ifbra{T^n z \in \inter S_j}$ and $f'_j(n) = \ifbra{f(n) = c_j}$. Then $g_j'(n) = f_j'(n)$ for $n \in R \setminus Z$. By Lemma \ref{lem:tot-min=>not-auto}, this is only possible if for each $j$, the set $\inter S_j$ is either empty or dense. Since $\mu_X( X \setminus \bigcup_{j=1}^r \inter S_j) = 0$, there is $i$ such that $\inter S_i$ is dense, and $\inter S_j = \emptyset$ for $j \neq i$. Denoting by $Z' \supset Z$ the set of $n \in R$ with $T^n z \in X \setminus \inter S_i$ we have $d^*(Z') = 0$ and $f(n) = g(n) = c_i$ for $n \in R \setminus Z'$, as needed.	
\end{proof}

\begin{proof}[Proof of Theorem \ref{thm:main-weakly-periodic}]
	This is a direct application of Theorem \ref{thm:gen-poly=>not-auto-dens-1} with $f = g$ and $R = \NN_0$
\end{proof}

It is not a trivial matter to determine whether a given generalised polynomial is periodic away from a set of density $0$, although it can be accomplished by the techniques in \cite{BergelsonLeibman2007, Leibman-2012}. In order to give explicit examples, we restrict ourselves to generalised polynomials of a specific form, which is somewhat more general than the one considered in Proposition \ref{prop:poly=>not-wp}. 

\begin{corollary}\label{thm:gen-poly-eqdist=>not-auto}
	Suppose that $q \colon \ZZ \to \RR$ is a generalised polynomial with the property that $\lambda q( an) \bmod{1}$ is equidistributed in $[0,1)$ for any $\lambda \in \QQ \setminus \{0\}$ and $a \in \NN$, and let $m \geq 2$. Then the sequence $f(n) = \floor{ q(n) } \bmod{m}$ is not automatic.	
\end{corollary}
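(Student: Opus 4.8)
The plan is to deduce Corollary \ref{thm:gen-poly-eqdist=>not-auto} from Theorem \ref{thm:gen-poly=>not-auto-dens-1} (equivalently Theorem \ref{thm:main-weakly-periodic}) by a contradiction argument. Suppose $f(n) = \floor{q(n)} \bmod m$ were automatic. By Lemma \ref{lem:auto=>weak-per}, $f$ is then weakly periodic, and it is obviously a generalised polynomial (the floor of a generalised polynomial reduced mod $m$ is again a generalised polynomial, since $x \bmod m = x - m\floor{x/m}$). Hence Theorem \ref{thm:main-weakly-periodic} applies: there is a periodic function $b \colon \NN_0 \to \RR$ and a set $Z$ of upper Banach density zero with $f(n) = b(n)$ off $Z$.

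The next step is to turn this ``periodic away from density zero'' conclusion into an honest contradiction with the equidistribution hypothesis. Let $q_0$ be a period of $b$. First I would argue that $b$ must in fact be constant on each residue class mod $q_0$ in a way that is incompatible with equidistribution. Concretely, fix a residue $r \bmod q_0$ and consider the arithmetic subprogression $n = q_0 k + r$. On this progression $b$ takes a single value, say $\ell_r \in \{0,1,\dots,m-1\}$, so $\floor{q(q_0 k + r)} \equiv \ell_r \pmod m$ for all $k$ outside a density-zero set. Rewriting, this says that $\floor{q(q_0 k + r)}/m$ has fractional part lying in the union of the $m$ intervals $[\,(\ell_r + jm)\text{-type positions}\,]$; more cleanly, $\{ q(q_0 k + r)/m \} \in [\ell_r/m, (\ell_r+1)/m)$ fails only on a set of $k$ of density zero. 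But by hypothesis, with $\lambda = 1/m \in \QQ \setminus \{0\}$ and $a = q_0$, the sequence $k \mapsto q(q_0 k + r)/m \bmod 1$ — wait, the hypothesis is about $q(an)$, so I should apply it to the shifted polynomial; since the hypothesis quantifies over all $a \in \NN$ but not over shifts $b$, I would instead first use the freedom in Theorem \ref{thm:gen-poly=>not-auto-dens-1} / Remark \ref{rmrk:BLnilgenpolythm} differently, or observe that a shift $q(an+b)$ is handled by expanding and noting the equidistribution of $q(an) \bmod 1$ already forces equidistribution of $q(an+b)\bmod 1$ when $q$ is itself built appropriately — actually the cleanest route is: since $f(q_0 k + r)$ agrees with a constant off a density-zero set, and $f(q_0k+r) = \ifbra{\{q(q_0k+r)/m\} \in [\ell_r/m,(\ell_r+1)/m)} \cdot(\dots)$, the set of $k$ with $\{q(q_0 k+r)/m\}$ in a fixed length-$1/m$ interval has density either $0$ or $1$, contradicting that equidistribution forces it to have density exactly $1/m \in (0,1)$.

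The main obstacle, and the one place I would be careful, is matching the hypothesis (equidistribution of $\lambda q(an) \bmod 1$) with what the periodicity conclusion actually gives (a statement about $q(q_0 k + r)$, i.e. an arithmetic progression with a nonzero \emph{shift}). There are two ways around this. Option one: strengthen the bookkeeping — note that $f$ automatic implies each restriction $f(q_0 k + r)$ is automatic hence weakly periodic, and run Theorem \ref{thm:gen-poly=>not-auto-dens-1} with $R = \NN_0$ directly on the generalised polynomial $n \mapsto \floor{q(n)}\bmod m$ and then pass to the subprogression at the level of the nilsystem representation, where shifts are harmless (the point $z$ just moves to $T^r z$ and the system stays totally minimal after passing to $T^{q_0}$, by Remark \ref{rmrk:BLnilgenpolythm}). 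Option two, more elementary: observe that if $q(an)\bmod 1$ is equidistributed for every $a$, then so is $q(an+b)\bmod 1$ for every $b$ — this is not automatic in general, so I would actually want the hypothesis as stated to be used only with shift $0$; thus I would arrange the contradiction to involve $q(q_0 k)$ (the $r = 0$ class) rather than a general class, which suffices since $b$ is periodic and hence constant on \emph{some} class, and if $b$ is constant on the $r=0$ class with value $\ell$, then $\{q(q_0 k)/m\}$ lands in $[\ell/m,(\ell+1)/m)$ for all $k$ off a density-zero set, directly contradicting equidistribution of $(1/m)q(q_0 k) \bmod 1$. I will write it up via this last route, which keeps everything self-contained and uses the hypothesis exactly as stated with $\lambda = 1/m$ and $a = q_0$.

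\begin{proof}
	The sequence $g(n) = \floor{q(n)} \bmod m$ is a generalised polynomial, since $x \bmod m = x - m \floor{x/m}$. Suppose, for contradiction, that $f = g$ is automatic. By Lemma \ref{lem:auto=>weak-per}, $f$ is weakly periodic, so by Theorem \ref{thm:main-weakly-periodic} there is a periodic $b \colon \NN_0 \to \RR$, of some period $Q \in \NN$, and a set $Z \subset \NN_0$ with $d^*(Z) = 0$ such that $f(n) = b(n)$ for all $n \notin Z$. In particular, $b(n) \in \{0,1,\dots,m-1\}$ for all $n \notin Z$, and since each residue class modulo $Q$ meets the density-zero set $Z$ in a proper subset, $b$ is integer-valued everywhere; write $\ell = b(0) \in \{0,1,\dots,m-1\}$.

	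Consider the arithmetic progression $n = Q k$, $k \in \NN_0$. The set $Z_0 = \{ k \in \NN_0 \mid Qk \in Z \}$ has density zero, and for $k \notin Z_0$ we have $\floor{q(Qk)} \equiv \ell \pmod m$, equivalently
	\begin{equation*}
		\fp{ \tfrac{1}{m} q(Qk) } \in \left[ \tfrac{\ell}{m}, \tfrac{\ell+1}{m} \right).
	\end{equation*}
	Hence the set of $k$ for which $\tfrac{1}{m} q(Qk) \bmod 1$ lies in the interval $[\ell/m, (\ell+1)/m)$ has density $1$. On the other hand, applying the hypothesis with $\lambda = 1/m \in \QQ \setminus \{0\}$ and $a = Q$, the sequence $\tfrac{1}{m} q(Qk) \bmod 1$ is equidistributed in $[0,1)$, so this same set of $k$ has density equal to the length of the interval, namely $1/m < 1$. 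This is a contradiction, so $f$ is not automatic.
\end{proof}
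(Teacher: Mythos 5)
Your proof is correct and follows essentially the same route as the paper: restrict to the arithmetic progression $aQ\NN_0$ (shift $0$, so the hypothesis applies verbatim with $\lambda=1/m$, $a=Q$), where Theorem \ref{thm:main-weakly-periodic} forces $\floor{q(Qk)}\bmod m$ to be constant off a density-zero set, contradicting equidistribution of $\tfrac1m q(Qk)\bmod 1$. The paper's proof is just a terser version of exactly this argument.
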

\begin{proof}
	Suppose $f(n)$ were automatic. By Theorem \ref{thm:main-weakly-periodic}, there exist $a \in \NN$ and $Z \subset \NN_0$ with $d^*(Z) = 0$ such that $f(an)$ is constant for $n \in \NN_0 \setminus Z$. Hence, there is some $0 \leq l < m$ such that $\frac{1}{m} q(an) \in \left[ \frac{l}{m}, \frac{l+1}{m} \right)$ for $n \in \NN_0 \setminus Z$, contradicting the equidistribution assumption.
\end{proof}

The uniform distribution of generalised polynomials has been extensively studied by \Haland-Knutson \cite{Haland-1993, Haland-1994, HalandKnuth-1995}, and later a very general theory was developed by Bergelson and Leibman \cite{BergelsonLeibman2007, Leibman-2012}. In view of the the results in \cite{Haland-1993}, it is fair to say that a ``generic'' generalised polynomial $q(n)$ is equidistributed modulo $1$. Hence, the assumptions on $q(n)$ in Corollary \ref{thm:gen-poly-eqdist=>not-auto} are not overly restrictive. 

To make the last remark precise, let us define the (multi)set of coefficients of a generalised polynomial $q$ as follows. If $q(n) = \sum_{j} \alpha_j n^j$ is a polynomial, then the coefficients of $q(n)$ are the non-zero terms among the $\a_j$. If $q(n) = r_1(n) + r_2(n)$ or $q(n) = r_1(n) \cdot r_2(n)$, then the coefficients of $q(n)$ are the union of the coefficients of $r_1(n)$ and $r_2(n)$. Finally, if $q(n) = p(n) \floor{ r(n)}^d$, then the coefficients of $q(n)$ are the union of the coefficients of $r(n)$ and the coefficients of $p(n)$. The set of coefficients will depend on the choice of a representation of the generalised polynomial at hand; we fix one such choice. We cite a slightly simplified version of the main theorem of \cite{Haland-1993}.

\begin{theorem}\label{thm:equidistr-gen-poly}
	Suppose that $q(n)$ is a generalised polynomials, and all of the products of subsets of the coefficients of $q(n)$ are $\QQ$-linearly independent. Then $q(n)$ is equidistributed modulo $1$.
\end{theorem}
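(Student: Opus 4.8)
The plan is to reduce the multidimensional equidistribution statement to Weyl's criterion and then verify the relevant exponential sum estimates by induction on the complexity of the generalised polynomial $q$. By Weyl's criterion, $q(n)$ is equidistributed modulo $1$ if and only if for every nonzero integer $h$ the averages $\frac1N\sum_{n<N} e(h\,q(n))$ tend to $0$ as $N\to\infty$, where $e(x) = e^{2\pi i x}$. The key point is that the hypothesis --- all products of subsets of the coefficients of $q$ are $\QQ$-linearly independent --- is exactly what is needed to keep the relevant frequencies irrational at every stage of an inductive van der Corput argument. I would set up the induction so that it simultaneously controls a slightly stronger ``uniform'' family of sums $\frac1N\sum_{n<N} e\big(h\,q(n) + \psi(n)\big)$ where $\psi$ ranges over generalised polynomials whose coefficients are $\QQ$-linearly independent of (all products of) the coefficients of $q$; this is necessary because applying van der Corput to a product $p(n)\floor{r(n)}^d$ or to a composite generalised polynomial produces error terms of exactly this shape.

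The main steps, in order, would be: (i) formulate the strengthened inductive claim about weighted Weyl sums as above, with the bookkeeping of which coefficients are ``in play''; (ii) handle the base case, where $q$ is an honest polynomial $\sum_j \a_j n^j$ with $\QQ$-linearly independent coefficients --- here the $\QQ$-linear independence gives in particular that the leading coefficient is irrational, and Weyl's classical inequality (or the standard van der Corput iteration) yields equidistribution, indeed jointly with any lower-order polynomial weight; (iii) the inductive step for sums $q = r_1 + r_2$, which is essentially immediate once one tracks that the coefficient sets combine disjointly and the independence hypothesis is inherited; (iv) the inductive step for products $q = r_1 \cdot r_2$ and for $q(n) = p(n)\floor{r(n)}^d$, where one writes $\floor{r(n)} = r(n) - \{r(n)\}$, expands, and applies van der Corput in the $n$-variable: the $h$-th difference $q(n+t) - q(n)$ is again (up to a controlled fractional-part error, which one absorbs using that $\{r(n)\}$ itself equidistributes and is ``linearly disjoint'' from the remaining data) a generalised polynomial of strictly smaller complexity whose active coefficients are still $\QQ$-linearly independent of everything that survives, so the inductive hypothesis applies to the inner sum for almost every shift $t$; (v) conclude by Weyl's criterion.

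The hard part will be step (iv) --- in particular, making rigorous the claim that after taking a van der Corput difference, the fractional-part terms $\{r(n)\}$ and $\{r(n+t)\}$ that appear do not conspire to destroy the irrationality of the surviving frequencies, and that the ``product of subsets of coefficients'' independence hypothesis is genuinely preserved under differencing. This is precisely the technical heart of \Haland's paper, and in a self-contained write-up one would need to be careful about which products of coefficients can appear in the differenced expression: differencing $p(n)\floor{r(n)}^d$ in $n$ produces cross terms involving products of coefficients of $p$ with coefficients of $r$, and the hypothesis that \emph{all} products of subsets are $\QQ$-linearly independent is exactly the closure condition that keeps the induction going. Rather than reproduce \Haland's argument in full, I would cite \cite{Haland-1993} for this theorem (as the paper in fact does), noting that the cited ``slightly simplified version'' follows from the main result there by specializing to the hypothesis that all subset-products of coefficients are $\QQ$-linearly independent, which in \Haland's terminology forces the associated system of generalised polynomials to be ``non-degenerate'' and hence jointly equidistributed.
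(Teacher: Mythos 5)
The paper offers no proof of this statement: it is quoted verbatim as a slightly simplified version of the main theorem of \cite{Haland-1993}, so the only justification given is the citation, and your proposal ends in exactly the same place by deferring the technical heart (the van der Corput differencing of the floor terms and the preservation of the subset-product independence condition) to \Haland's paper. Your preceding sketch of the Weyl-criterion/van der Corput induction is a reasonable outline of how that cited argument goes, so the approach matches the paper's.
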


As an example of an application, we conclude that $\floor{ \sqrt{2} n \floor{ \sqrt{3} n} } \bmod{10}$ is not an automatic sequence. 
\section{Combinatorial structure of automatic sets}\label{sec:Sparse}
In this section, we begin the investigation of sparse sequences. Here, we call a sequence $f \colon \NN_0 \to \{0,1\} \subset \RR$ \emph{sparse} if it is the characteristic function of a set of density $ 0$ (if such a sequence comes from a generalised polynomial or is automatic, it also has upper Banach density $0$, cf.\ \cite{ByszewskiKonieczny2016} and Lemma \ref{lem:density-equivalence} below). Note that for such sparse sequences, Theorem \ref{thm:main-weakly-periodic} conveys no useful information. Conversely, to prove Conjecture \ref{conjecture:main}, it would suffice (in light of Theorem \ref{thm:main-weakly-periodic}) to verify it for sparse sequences; this observation will be made precise in the proof of Theorem \ref{thm:main-optimized} below.

\subsection*{Arid sets}
To formulate our main result, it is convenient to introduce the following piece of terminology, inspired by Kedlaya \cite{Kedlaya-2006}. Such sets appear in the papers of Szilard--Yu--Zhang--Shallit \cite{SzilardYuZhangShallit-1992}, Gawrychowski--Krieger--Rampersad--Shallit \cite{GawrychowskiKriegerRampersadShallit-2010}, Derksen \cite{Derksen-2007} and Adamczewski--Bell \cite{AdamczewskiBell-2008} (among many others) 
 under different names (regular languages of polynomial growth/sparse/poly-slender/bounded) or without any name. A closely related class of sets known as $p$-normal sets plays a significant r\^ole in the study of zero sets of linear recurrences in positive characteristic; see also \cite{DerksenMasser-2014, AdamczewskiBell-2012}. Other related classes of sets include Saguaro sets of \cite{AdamczewskiBell-2008} and $F$-sets of \cite{MoosaScanlon-2002}. Since we will use the notation simultaneously for languages and for the associated sets of integers, and since some of the existing terminology might be confusing in our context, we have decided to use a different term.
 
\begin{definition}[Arid sets]\label{defverysparse}
	Let $k \geq 2$, $r \geq 0$ be integers. A \emph{basic} $k$-\emph{arid set} (of rank $ \leq r$) is a set of the form
\begin{equation}\label{eq:v-sparse-def-Sigma}
		A = \set{ v_0 w_1^{l_1} v_1 w_2^{l_2} \cdots w_r^{l_r} v_r }{ l_1, \dots, l_r \in \NN_0},
	\end{equation}
	where $v_0,\dots,v_r \in \Sigma_k^*$ and $w_1,\dots,w_r \in \Sigma_k^*$. A set $A \subset \Sigma_k^*$ is $k$-\emph{arid}  (of rank $ \leq r$) if it is a finite union of \emph{basic arid sets} (of rank $ \leq r$). If $k$ is clear from the context, we speak simply of (basic) arid sets.
	
	We similarly define these notions for set of integers: A set $E \subset \NN_0$ is $k$-arid  (of rank $\leq r$)  if it has the form $\set{ [u]_k }{ u \in A}$ where $A \subset \Sigma_k^*$ is arid (of rank $\leq r$).
A sequence $f \colon \NN_0 \to \{0,1\}$ is arid if the set $\set{n \in \NN_0}{f(n) = 1}$ is arid. 
\end{definition}

Using the Kleene star notation, the $k$-arid set $A$ in \eqref{eq:v-sparse-def-Sigma} can be alternatively written as
\begin{equation*}
		A = v_0 w_1^* v_1 w_2^* \cdots w_r^* v_r.
\end{equation*}
In the following, we will not use this notation, and rather use the former notation which seems more appropriate for our context.

\begin{lemma}
	Any $k$-arid  sequence  is $k$-automatic.
\end{lemma}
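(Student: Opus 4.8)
The plan is to show that a basic $k$-arid set is $k$-automatic by exhibiting an explicit automaton (or, more conveniently, by exhibiting a regular language and invoking Kleene's theorem as stated in the excerpt), and then to close under finite unions. First I would observe that the statement as phrased concerns a sequence indexed by $\NN_0$; by definition, the arid sequence $f$ is the characteristic function of a $k$-arid set $E \subset \NN_0$, which by definition has the form $\set{[u]_k}{u \in A}$ for some $k$-arid language $A \subset \Sigma_k^*$. By Kleene's theorem (quoted in Section~\ref{sec:DEF}: a language $L \subseteq \Sigma_k^*$ is regular if and only if $(\ifbra{(n)_k \in L})_{n\geq 0}$ is $k$-automatic), it suffices to produce, for each $k$-arid $E$, a \emph{regular} language $L$ such that $n \in E \iff (n)_k \in L$.

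The main subtlety is that membership $n \in E$ is about the \emph{canonical} base-$k$ representation $(n)_k$ (no leading zeros), whereas the definition of a basic arid set $A = \set{v_0 w_1^{l_1} v_1 \cdots w_r^{l_r} v_r}{l_i \in \NN_0}$ produces words that may have leading zeros and, worse, different choices of $(l_1,\dots,l_r)$ can give words representing the same integer (e.g. $0^a$ and $0^b$ both represent $0$, or $v_0 0^{l_1} v_1$ with $v_0$ empty). So $L$ cannot literally be taken to be $A$. The clean fix: $L := \set{(n)_k}{n \in E}$ is the set of canonical representations of integers of the form $[u]_k$ with $u \in A$. One shows $L$ is regular as follows. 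The language $A$ itself is regular (it is built from single letters by concatenation and Kleene star: $A = v_0 w_1^* v_1 w_2^* \cdots w_r^* v_r$, exactly a regular expression). Regular languages are closed under the operation $u \mapsto $ ``canonicalize'' in the relevant sense; concretely, $L = \{\,\text{the leading-zero-free suffix of } u : u \in A, \text{ and } u \text{ has no other issue}\,\}$ can be obtained from $A$ by: first intersect $A \cdot \{\epsilon\}$ with suitable regular constraints, then apply the regular operation that strips leading zeros. Even more simply, since $\{[u]_k : u \in A\}$ as a subset of $\NN_0$ only depends on $A$ up to this normalization, and automatic sequences are insensitive to leading zeros by \cite[Theorem 5.2.3]{AlloucheShallit-book} (noted in the text), it suffices to exhibit \emph{some} automaton, possibly reading inputs with leading zeros, that accepts exactly $\set{w \in \Sigma_k^*}{[w]_k \in E}$ — and the pattern $v_0 w_1^* v_1 \cdots w_r^* v_r$ together with all its leading-zero padded versions is again describable by a regular expression ($0^* v_0 w_1^* v_1 \cdots$ after accounting for the cases where initial $v_i$'s are powers of $0$), hence regular.

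So the key steps, in order, are: (1) reduce to a single basic $k$-arid set $E$, using that automatic sequences are closed under finite union (equivalently, the union of finitely many regular languages is regular, or the product automaton construction); (2) write down the regular expression $v_0 w_1^* v_1 \cdots w_r^* v_r$ for the defining language $A \subseteq \Sigma_k^*$, observing it is regular by construction; (3) pass from $A$ to the language of base-$k$ representations of $E = \{[u]_k : u \in A\}$, handling leading zeros via \cite[Theorem 5.2.3]{AlloucheShallit-book}; (4) invoke Kleene's theorem to conclude that the characteristic sequence of $E$, namely $f$, is $k$-automatic. The step I expect to be the only real obstacle — and it is a minor, bookkeeping obstacle rather than a conceptual one — is step (3): making sure that two sources of non-injectivity (leading zeros, and the collapse of different exponent tuples when some $v_i$ or $w_j$ is a power of $0$) are correctly absorbed, so that the regular language one writes down really does pick out $\set{w}{[w]_k \in E}$ and nothing more. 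Alternatively one can sidestep (3) entirely by building a nondeterministic automaton directly: it guesses the decomposition $w = v_0 w_1^{l_1} \cdots w_r^{l_r} v_r$ by running through a chain of $2r+1$ gadgets (a literal-string gadget for each $v_i$, a loop gadget for each $w_j$), and by the equivalence of NFAs and DFAs this yields a $k$-automaton, hence $k$-automaticity.
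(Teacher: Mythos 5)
Your proposal is correct and follows the same route as the paper, which simply observes that a $k$-arid set is given by a regular expression and is therefore $k$-automatic by Kleene's theorem (with the explicit automaton construction mentioned as an alternative). You spell out the leading-zero and normalization bookkeeping in more detail than the paper does, but the underlying argument is identical.
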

\begin{proof}
It is clear that any $k$-arid set is given by a regular expression and hence it is $k$-automatic by Kleene's theorem. Alternatively, in this simple case one can construct the required automata by hand.
\end{proof}

Cobham \cite{Cobham-1972} proved that there is a gap in the growth rate of automatic sets.

\begin{proposition} Let $E \subset \N_0$ be a non-empty automatic set. Then exactly one of the following two conditions holds: \begin{enumerate} \item There exists an integer $r\geq 0$ and a real number $c>0$ such that $$\lim_{N\to \infty}  \frac{|E\cap[N]|}{\log^r(N)} = c.$$ \item There exists $\alpha>0$ such that $$\liminf_{N\to \infty} \frac{|E\cap[N]|}{N^{\alpha}}=\infty.$$\end{enumerate}\end{proposition}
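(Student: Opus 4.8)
The statement is Cobham's gap theorem for the growth of automatic sets, and the natural route is to reduce to a statement about regular languages and then analyse the structure of a deterministic finite automaton recognising $(E)_k := \set{(n)_k}{n \in E}$. First I would fix a DFA $\mathcal{A} = (S, \Sigma_k, \delta, s_0, \Omega, \tau)$ with $E = \set{[w]_k}{\tau(\delta(s_0,w)) = 1}$ (using Proposition \ref{automthm1} and Kleene's theorem), discarding states not reachable from $s_0$ and not co-reachable to an accepting state. The combinatorial object controlling the growth is the set of \emph{cycles} in the underlying directed graph of $\mathcal{A}$: roughly, $|E \cap [k^N]|$ counts accepting paths of length $\leq N$ from $s_0$, and the order of magnitude of the number of such paths is governed by whether the automaton has two distinct cycles sharing a common state (``branching'' at a cycle) or whether every strongly connected component is a single simple cycle.

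\textbf{Key steps, in order.} (1) Reduce to counting: $|E \cap [k^N]| = \sum_{j \leq N} \#\{\text{accepting paths of length } j \text{ from } s_0\} + O(1)$, so up to a bounded factor it suffices to understand the number of length-$N$ paths. (2) Establish the dichotomy at the level of the transition graph $G$ of $\mathcal{A}$: either (a) there is a state $s$ lying on two distinct cycles (equivalently, some strongly connected component of $G$ that is reachable and co-reachable is not a simple cycle), or (b) every reachable-and-co-reachable strongly connected component is a simple cycle. (3) In case (a), exhibit explicitly two loops $u, u'$ at a common reachable, co-reachable state which, read in base $k$, generate a free submonoid structure rich enough to force $\liminf_N |E \cap [N]|/N^\alpha = \infty$ for some $\alpha > 0$ — one gets $\gg 2^{cN}$ accepting words among those of length $\leq N$, hence $\gg (k^N)^\alpha$ with $\alpha = c/\log_2 k$; some care is needed to ensure the two loops are chosen so that the resulting words are genuinely distinct as integers and sufficiently numerous, but this is the standard ``pumping in two directions'' argument. (4) In case (b), show that the language is a finite union of sets of the form $v_0 w_1^* v_1 \cdots w_r^* v_r$ with the $w_i$ being the cycle-words of the (necessarily simple, hence nested along any accepting path) cycles; that is, $E$ is $k$-arid of some rank $r$. (5) For an arid set of rank $\leq r$, a direct combinatorial count (the number of $(l_1,\dots,l_r) \in \NN_0^r$ with $\sum l_i |w_i| + \sum |v_i| \leq N$ is $\sim c' N^r$ for a single basic arid set, and summing finitely many such, picking out the dominant rank) shows $|E \cap [k^N]| \sim c'' N^{r}$, i.e. $|E \cap [M]| \sim c \log^{r}(M)$ after the substitution $M \approx k^N$. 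One should note the two cases are mutually exclusive since $\log^r N = o(N^\alpha)$.

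\textbf{Main obstacle.} The delicate point is Step (4): proving that case (b) — every reachable, co-reachable strongly connected component is a simple cycle — actually forces the accepted language to be arid. The issue is that an accepting path may enter and leave a simple cycle and later enter another simple cycle, and one must argue that along any single accepting path the cycles visited are traversed in a linear order, so that the whole language decomposes as a \emph{finite} union of basic arid expressions indexed by the finitely many ``cycle-sequences'' through the automaton; the finiteness and the bookkeeping of the connecting words $v_i$ is where the argument needs to be done carefully. A secondary (lesser) difficulty is making Step (3) quantitatively clean — ensuring the doubly-pumped words are pairwise distinct integers and controlling the count from below by $k^{\alpha N}$ with an explicit $\alpha>0$ — but this is routine once two independent loops at a common state are in hand. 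I would also remark that this proposition is quoted from Cobham \cite{Cobham-1972}, so in the paper itself it may simply be cited rather than reproved; the sketch above is how one would prove it from scratch.
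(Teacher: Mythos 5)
Your reading of the situation is right: the paper does not prove this proposition at all, but cites it directly as Theorems 11 and 12 of Cobham (1972), with the arid/poly-log case elaborated separately in Proposition \ref{automclassthm} via Szilard--Yu--Zhang--Shallit. So your sketch is a from-scratch reconstruction rather than a parallel of anything in the text, and it follows the standard route: the dichotomy on the transition graph (every useful strongly connected component a simple cycle, versus some component with branching), pumping two independent loops to get $\gg k^{\alpha N}$ accepted words in the branching case, and the arid decomposition with a polynomial-in-$N$ lattice-point count in the other. Three points where your sketch is thinner than what the cited results actually deliver. First, in step (2)--(3) the operative condition is not merely ``a state on two distinct cycles'' but the existence of loops $v_1,v_2$ at a common accessible, coaccessible state with $v_1v_2\neq v_2v_1$ (condition (iii) of Proposition \ref{automclassthm}); in a DFA this does follow from a non-simple-cycle component (the two loops can be taken to begin with different letters, so they cannot commute), but that implication deserves a line. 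Second, your step (5) yields $|E\cap[k^N]|=\Theta(N^r)$ by counting tuples $(l_1,\dots,l_r)$, but the proposition asserts an actual limit $c>0$; overlaps among the finitely many basic arid sets and the passage from $N$ a power of $k$ to general $N$ mean the existence of the limit is the genuinely delicate part of Cobham's theorem, and your sketch does not address it. Third, in case (a) the $\liminf$ over all $N$ (not just $N=k^j$) needs the monotonicity of $|E\cap[N]|$ and a slight loss in $\alpha$ to interpolate between consecutive powers of $k$ --- routine, but worth stating. None of these is a fatal gap for a sketch, but as written the proposal proves the $\Theta$-version of case (i) rather than the limit version.
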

\begin{proof} This follows from \cite[Theorem 11 \& 12]{Cobham-1972}\end{proof}

According to the theorem above, automatic sets have either poly-logarithmic or polynomial rate of growth. Szilard--Yu--Zhang--Shallit \cite{SzilardYuZhangShallit-1992} showed that the class of automatic sets of poly-logarithmic growth coincides with the class of arid sets. To state a more precise version of this result, we recall that a state $s$ in a $k$-automaton $\mathcal{A}=(S,\Sigma_k,\delta,s_0,\{0,1\},\tau)$ with output $\{0,1\}$ is called \emph{accessible} if there exists $v\in \Sigma_k^*$ such that $\delta(s_0,v)=s$ and is called $coaccessible$ is there exists $v\in \Sigma_k^*$ such that $\tau(\delta(s,v))=1$.

\begin{proposition}\label{automclassthm}  Let $E \subset \N_0$ be a $k$-automatic set and let $\mathcal{A}=(S,\Sigma_k,\delta,s_0,\{0,1\},\tau)$ be a $k$-automaton with output $\{0,1\}$ that produces $E$, in the sense that an integer $n$ is in $E$ if and only $\tau(\delta(s_0,n))=1$. Then the following conditions are equivalent: 
\begin{enumerate}
\item[(i)] The set $E$ is arid.
\item[(ii)] There exists an integer $r$ such that $|E\cap [N]|= O(\log^r(N))$.
\item[(iii)] There does not exist an accessible and coaccessible state $s\in S$ and $v_1,v_2\in \Sigma_k^*$ such that $v_1v_2\neq v_2v_1$ and $\delta(s,v_1)=\delta(s,v_2)=s$.
\end{enumerate}
Moreover, if $E$ is arid of rank $r$, then the limit $\lim_{N\to \infty} |E\cap[N]|/\log^r(N)$ exists and is finite.
\end{proposition}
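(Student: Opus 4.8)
The plan is to establish the cycle of implications (iii) $\Rightarrow$ (i) $\Rightarrow$ (ii) $\Rightarrow$ (iii), together with the quantitative refinement. The implication (i) $\Rightarrow$ (ii) is essentially the already-stated Lemma that arid sequences are automatic combined with a direct counting argument: for a basic arid set $v_0 w_1^{l_1} v_1 \cdots w_r^{l_r} v_r$, the number of words of length $\le L$ is $O(L^r)$ since each exponent $l_i$ ranges over an interval of length $O(L)$; summing over the finitely many basic pieces in a presentation of $E$ and translating ``length $\le L$'' into ``$[u]_k < N$'' via $L \asymp \log_k N$ gives the bound $|E \cap [N]| = O(\log^r N)$. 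The implication (ii) $\Rightarrow$ (iii) I would prove by contraposition: given an accessible, coaccessible state $s$ with $\delta(s,v_1) = \delta(s,v_2) = s$ and $v_1 v_2 \ne v_2 v_1$, pick $u$ with $\delta(s_0,u) = s$ and $u'$ with $\tau(\delta(s,u')) = 1$. Then for every word $x \in \{v_1, v_2\}^*$, the integer $[u x u']_k$ lies in $E$; because $v_1$ and $v_2$ do not commute, the map $x \mapsto u x u'$ followed by $[\cdot]_k$ is injective on $\{v_1,v_2\}^{\le t}$ (distinct free-monoid words over a non-commuting pair of words over $\Sigma_k$ give distinct words in $\Sigma_k^*$, hence distinct integers since base-$k$ representations are unique once we are careful about leading zeros — a minor point handled by choosing $u$ nonempty with nonzero leading digit, or arguing on the level of $\tilde a \colon \Sigma_k^* \to \Omega$), so $E$ contains at least $2^t$ elements among integers of bit-length $O(t)$, forcing polynomial growth and contradicting (ii). This is the half of the equivalence that really uses the noncommutativity hypothesis in an essential way, and it is the step I expect to be the main obstacle, because making the injectivity claim airtight requires the combinatorial lemma that a nonempty word $v_1$ and a word $v_2$ over a common alphabet generate a free submonoid unless $v_1 v_2 = v_2 v_1$ (equivalently, unless $v_1$ and $v_2$ are powers of a common word) — this is a classical fact (Fine--Wilf / Lyndon--Sch\"utzenberger) but needs to be invoked or reproved.

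For (iii) $\Rightarrow$ (i) — the structural heart of the statement — I would argue as follows. Assume no accessible--coaccessible state admits a pair of noncommuting self-loops. Every $n \in E$ corresponds to a path $s_0 \xrightarrow{(n)_k} s$ with $\tau(s) = 1$; all states along this path are accessible, and all are coaccessible (they can reach an accepting state by following the rest of the word). Decompose the path by recording, for each state, the loops it performs. By the hypothesis, for each accessible--coaccessible state $s$ the set of words $w$ with $\delta(s,w) = s$ is contained in $\{w_s^{l} : l \ge 0\}$ for a single primitive word $w_s$ (the common root of all self-loops at $s$) — here one uses that any two such loop-words commute, hence by the classical theorem are powers of a common primitive word, and a compactness/finiteness argument over the finitely many loop-words at $s$ pins down $w_s$. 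One then shows by induction on $|S|$, or by a pumping-style analysis of accepting paths, that every accepted word has the form $v_0 w_{s_1}^{l_1} v_1 w_{s_2}^{l_2} \cdots w_{s_j}^{l_j} v_j$ where the $v_i$ are ``short'' connecting segments (of bounded length, drawn from a finite set, since a path visiting a state at most boundedly-many times before looping has bounded length between loop-phases) and $j \le |S|$; collecting the finitely many possible choices of the $v_i$, the $w_{s_i}$, and the interleaving pattern exhibits $E$ as a finite union of basic arid sets of rank $\le |S|$, i.e.\ $E$ is arid. This is the argument of Szilard--Yu--Zhang--Shallit \cite{SzilardYuZhangShallit-1992}, which I would follow.

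Finally, for the ``moreover'' clause: suppose $E$ is arid of rank $r$, so $E = [u]_k$-image of $\bigcup_{m} A_m$ with each $A_m = \set{v_0^{(m)} (w_1^{(m)})^{l_1} \cdots (w_r^{(m)})^{l_r} v_r^{(m)}}{l_i \ge 0}$. For a single basic piece $A_m$ with all $w_i^{(m)} \ne \epsilon$, the words of length in $[L, L+\Delta)$ are counted by lattice points in a simplex-like region, and an inclusion--exclusion on which $w_i^{(m)}$ are empty plus a Riemann-sum estimate shows $|A_m \cap \{|u| \le L\}| = c_m L^r / r! \cdot \prod |w_i^{(m)}|^{-1} + O(L^{r-1})$ when exactly $r$ of the $w_i$ are nonempty, and lower-order otherwise; converting via $|u| \le \log_k N$ and noting that distinct basic pieces may overlap only in a lower-rank (hence $O(\log^{r-1} N)$) set — which itself follows from applying the already-proved structure theory to the intersection, or from a direct noncommutation argument — one gets $|E \cap [N]|/\log^r N \to c$ for an explicit $c = \sum_m c_m$, and $c > 0$ precisely because at least one basic piece genuinely has rank $r$. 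The bookkeeping here is routine but tedious; the one genuinely non-trivial input is controlling overlaps between basic pieces, for which I would again lean on the Lyndon--Sch\"utzenberger structure of commuting words.
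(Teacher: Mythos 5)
Your proposal is essentially correct and follows the same route as the paper, which does not give its own argument but defers entirely to \cite{SzilardYuZhangShallit-1992} (with the formulation of \cite{BHS-2017}); your cycle (iii)~$\Rightarrow$~(i)~$\Rightarrow$~(ii)~$\Rightarrow$~(iii) is exactly the Szilard--Yu--Zhang--Shallit argument, with the counting step (i)~$\Rightarrow$~(ii) matching the paper's own Lemma~\ref{lem:v-sparse-growth}, and the use of the defect theorem (non-commuting words generate a free submonoid; commuting words share a primitive root) in both directions is the right key input.

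One sub-claim in your treatment of the ``moreover'' clause is false as stated: two \emph{distinct} basic arid expressions of rank $r$ can intersect in a set that is still of rank $r$ (e.g.\ $\set{(01)^l}{l\ge 0}$ and $\set{(0101)^l}{l\ge 0}$ intersect in $\set{(01)^{2l}}{l\ge 0}$, which has rank $1$, not $0$), so you cannot discard overlaps as $O(\log^{r-1}N)$. This does not sink the argument: intersections of basic arid sets are again regular languages of polynomial growth, hence arid by the equivalence you have just proved, so each term in a full inclusion--exclusion has an asymptotic of the form $c\log^{j}N+o(\log^j N)$ with $j\le r$ (by induction on the number of pieces), and the alternating sum still converges to a finite limit. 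You should also note that the limit is only asserted to exist and be finite when $r$ is the \emph{exact} rank; with the inclusion--exclusion done properly this comes out of the same computation. Everything else, including the care you take with leading zeros and with restricting (iii) to accessible and coaccessible states, is sound.
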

\begin{proof} This is essentially proved in \cite{SzilardYuZhangShallit-1992}; our formulation is influenced by \cite[Lemmas 2.1--2.3]{BHS-2017} (for more details and related results see references therein). \end{proof}
\begin{remark*}
	Some similar results are also implicit in \cite[Lemma 6.7]{AdamczewskiBell-2008} and \cite[Proposition 7.9]{Derksen-2007}; see also \cite{Kedlaya-2006}.
\end{remark*}

\begin{remark}\label{kubekkawey} Let $a \geq 1$ be an integer. Then the notions of $k$-arid sets and $k^a$-arid sets coincide. This follows either from a direct argument or from Proposition \ref{automclassthm}.  We will use this observation several times.
\end{remark}

We will in fact need a slight improvement on the information on the rate of growth of arid sets from Proposition \ref{automclassthm}.

\begin{lemma}\label{lem:v-sparse-growth}
	Let $E \subset \NN_0$ be arid of rank (exactly) $r$. Then
	 $$\max_{M \in \NN_0} \abs{E \cap [M,M+N) } = O(\log^r(N)).$$
\end{lemma}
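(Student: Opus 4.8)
The plan is to pass from the arid set $E$ to a finite automaton producing it and to reduce the estimate to counting accepted words of a prescribed length. By Proposition~\ref{automclassthm} we may fix a $k$-automaton $\mathcal{A}=(S,\Sigma_k,\delta,s_0,\{0,1\},\tau)$ with $n\in E$ iff $\tau(\delta(s_0,(n)_k))=1$, satisfying condition~(iii) of that proposition. The elementary input is the $k$-adic decomposition of an interval: putting $s:=\lceil\log_k N\rceil+1$ (so $k^s>N$ and $s=O(\log N)$) and $q:=\floor{M/k^s}$, one has $[M,M+N)\subseteq[qk^s,(q+1)k^s)\cup[(q+1)k^s,(q+2)k^s)$, a union of at most two $k$-adic cylinders $[ak^s,(a+1)k^s)$. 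For $a\geq 1$, an integer $n\geq 1$ lies in $[ak^s,(a+1)k^s)$ precisely when $(n)_k=(a)_k u''$ for some $u''\in\Sigma_k^s$ (note that $(a)_k u''$ has no leading zero), so that
$$\abs{E\cap[ak^s,(a+1)k^s)}=\#\set{u''\in\Sigma_k^s}{\tau(\delta(\sigma_a,u''))=1},\qquad\sigma_a:=\delta(s_0,(a)_k);$$
and $\abs{E\cap[0,k^s)}\leq 1+\sum_{m=0}^{s}\#\set{n\in E}{(n)_k\text{ has length }m}$. Thus it suffices to control, uniformly over states $\sigma\in S$, the number $P_\sigma(m):=\#\set{u\in\Sigma_k^*}{\abs{u}=m,\ \tau(\delta(\sigma,u))=1}$ of accepted words of length exactly $m$ read from $\sigma$, together with the analogous count of length-$m$-representation elements of $E$.

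To estimate $P_\sigma(m)$ I would exploit the structure forced by~(iii). Every accepting path visits only accessible and coaccessible states, so it lies in the sub-automaton $\mathcal{A}'$ induced on these; in $\mathcal{A}'$, condition~(iii) says the loop monoid $\set{y\in\Sigma_k^*}{\delta(\sigma,y)=\sigma}$ is commutative for each state $\sigma$, hence contained in $z_\sigma^*$ for a single word $z_\sigma$. This forces every strongly connected component of $\mathcal{A}'$ to be a simple cycle (in the degenerate case a single state carrying at most one self-loop letter): if a state $\sigma$ in an SCC $Q$ had out-transitions on distinct letters $c_1\neq c_2$ both landing in $Q$, choosing return paths from $\delta(\sigma,c_i)$ to $\sigma$ inside $Q$ would give loop words $c_1y_1,c_2y_2$ at $\sigma$ with distinct first letters, contradicting that both lie in $z_\sigma^*$. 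Consequently an accepting path from $\sigma$ is described by a ``skeleton'' of bounded length $\theta\leq 2\abs{S}$ (the sequence of SCCs traversed, with entry vertices, exit edges and partial arcs --- only $O(1)$ choices) together with the numbers $t_1,\dots,t_p\geq 0$ of complete revolutions around the $p$ cyclic SCCs it passes through, subject to $\theta+\sum_j t_j\ell_j=m$ where $\ell_j\geq 1$ are the cycle lengths. Since $\#\set{(t_j)_{j=1}^p\in\NN_0^p}{\sum_j t_j\ell_j=m-\theta}=O(m^{p-1})$ for $p\geq 1$ and $=O(1)$ for $p=0$, and $p\leq\rho_\sigma$ where $\rho_\sigma$ is the maximal number of cyclic SCCs on an accepting path from $\sigma$, we obtain $P_\sigma(m)=O(m^{\rho_\sigma-1})$ (interpreted as $O(1)$ when $\rho_\sigma=0$).

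It remains to see that the relevant counts have degree $\leq r-1$. Let $\rho$ be the maximal number of cyclic SCCs chained along a \emph{no-leading-zero} accepting path from $s_0$; such paths are exactly the representations of elements of $E$, so the same skeleton-plus-revolutions bookkeeping gives $\#\set{n\in E}{(n)_k\text{ has length }m}=O(m^{\rho-1})$, and, using a skeleton achieving $\rho$ and varying the revolution counts freely, $\gg m^{\rho-1}$ such elements for all sufficiently large $m$ in a residue class modulo $\gcd_j\ell_j$ (distinct because a word determines its path). Hence $\abs{E\cap[N]}=\Theta((\log N)^{\rho})$, and comparison with $\abs{E\cap[N]}=O(\log^r N)$ from Proposition~\ref{automclassthm} forces $\rho\leq r$. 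For a cylinder with $a\geq 1$, the accepting paths from $\sigma_a$ that we count, prepended with the $(a)_k$-path, are no-leading-zero accepting paths from $s_0$, so they traverse at most $\rho$ cyclic SCCs from $\sigma_a$ onwards; therefore $\abs{E\cap[ak^s,(a+1)k^s)}=P_{\sigma_a}(s)=O(s^{\rho-1})=O(\log^{r-1}N)$. The possible cylinder with $a=0$ contributes $1+\sum_{m\leq s}O(m^{\rho-1})=O(s^{r})=O(\log^{r}N)$. Summing the at most two contributions gives $\abs{E\cap[M,M+N)}=O(\log^{r}N)$ uniformly in $M$, hence $\max_M\abs{E\cap[M,M+N)}=O(\log^{r}N)$; the case $r=0$ (where $E$ is finite) is trivial.

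The step I expect to be the main obstacle is the structural one: turning condition~(iii) into the clean statement that the strongly connected components of $\mathcal{A}'$ are simple cycles, and then bookkeeping the skeleton-and-revolutions decomposition precisely enough to read off the polynomial degree $\rho_\sigma-1$ and to carry out the growth comparison $\rho\le r$. The degenerate situations ($r=0$; single-state components; the $a=0$ cylinder; the value $n=0$ and the no-leading-zero convention) are routine but must be stated with care. One could instead try to work directly with an explicit arid representation $E=\bigcup_i\set{[v_0 w_1^{l_1}\cdots w_r^{l_r}v_r]_k}{l_i\geq 0}$, but then the ambiguity in parsing a long fixed prefix according to the template $v_0 w_1^*\cdots w_r^* v_r$ makes the counting delicate, so the automaton route seems preferable.
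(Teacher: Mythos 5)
Your proof is correct, but it takes a genuinely different route from the paper's. The paper works directly with the explicit representation of a basic arid set $E = \set{[v_0w_1^{l_1}\cdots w_r^{l_r}v_r]_k}{l_i\in\NN_0}$: after normalising so that all the $w_i$ are single letters (replacing $k$ by $k^a$ via Remark \ref{kubekkawey}) and reducing to windows of the form $[M,M+N)$ with $N=k^L$ and $k^L\mid M$, it observes that an element of $E\cap[M,M+N)$ is determined by its final $L$ digits and bounds the count by the number of tuples $(l_1,\dots,l_r)$ with $\sum_i l_i+\sum_i\abs{v_i}\le L$, which is $O(L^r)$. You instead pass through the automaton characterisation of Proposition \ref{automclassthm}(iii), show that condition (iii) forces the strongly connected components of the trimmed automaton to be simple cycles, and count accepting paths by a skeleton-plus-revolutions decomposition, closing the loop with a growth comparison to force the number of chained cycles to be at most $r$. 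Your route is longer and leans on two facts you assert rather than prove (that a commutative submonoid of a free monoid lies in some $z^*$, and the skeleton bookkeeping), but both are standard and your use of them is sound; in exchange you sidestep the parsing-ambiguity issue that the paper's terse final counting step glosses over (it is resolved there by the normalisation and the uniqueness of the final $L$ digits), the determinism of the automaton gives you injectivity of the count for free, and you in fact obtain the slightly sharper bound $O(\log^{r-1}N)$ for windows not containing small integers. Either argument establishes the lemma.
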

\begin{proof}
	It  suffices to deal with basic arid sets given by
	\begin{equation}\label{eq:110}
		E = \set{ [v_0 w_1^{l_1} v_1 w_2^{l_2} \cdots w_r^{l_r} v_r]_k }{ l_1, \dots, l_r \in \NN_0}.
	\end{equation}

	We begin with some standard reductions. Replacing $w_i$ with suitably chosen powers, altering $v_i$ accordingly, and passing to basic arid subsets, we may assume that all $w_i$ have the same length $a$. Replacing $k$ with $k^a$ and using Remark \ref{kubekkawey} enables us to assume that $\abs{w_i} = 1$ for each $i$. If $r$ is minimal, we further know that if $w_i = w_{i+1}$ for some $i$, then $v_i$ is not a power of $w_i$. Finally, we may assume  that $N = k^{L}$ is a large power of $k$, and that $M = k^L M'$ is divisible by $N$.
	
	Since an element of $E \cap [M,M+N)$ is uniquely determined by its final $L$ digits, the bound $\abs{E \cap [N]} \ll L^r$ follows immediately from counting the $r$-tuples $(l_1, \dots, l_r)$ with $\sum_{i=1}^r l_i + \sum_{i=0}^r \abs{v_i} \leq L$. 
\end{proof}

We are now ready to state the main theorem of this section in a more convenient language.
\begin{theorem}\label{thm:main-B2}
Suppose that a sparse set $E \subset \NN_0$ is simultaneously $k$-automatic and generalised polynomial. Then $E$ is $k$-arid. 
 
\end{theorem}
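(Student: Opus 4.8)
The plan is to combine the growth dichotomy for automatic sets (Proposition~\ref{automclassthm}, via Cobham) with the structural results on sparse generalised polynomials from the companion paper~\cite{ByszewskiKonieczny2016}. The key point is that ``arid'' is precisely the class of automatic sets of poly-logarithmic growth, so it suffices to rule out that a sparse set which is simultaneously automatic and generalised polynomial can have polynomial growth rate, i.e.\ satisfy alternative (ii) of Cobham's theorem with $\liminf_N |E\cap[N]|/N^\alpha = \infty$ for some $\alpha > 0$. Thus I first invoke Proposition~\ref{automclassthm}: if $E$ is \emph{not} $k$-arid, then $|E\cap[N]|$ grows at least polynomially in $N$.

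First I would argue that such polynomial growth is incompatible with $E$ being a \emph{sparse} generalised polynomial. Here I would appeal to the combinatorial richness that sets of positive-density-like growth must have: a set of integers whose counting function grows polynomially is, after passing to a suitable arithmetic progression and using the Bergelson--Leibman representation (Theorem~\ref{thm:BergelsonLeibman}), the return-time set of a nilsystem orbit to a semialgebraic set $S$ with $\mu_X(\partial S) = 0$; by Corollary~\ref{cor:density-uniform} such a set has upper Banach density $\mu_X(S)$, which is $0$ for a sparse set, so in particular $S$ has empty interior. The crucial input from~\cite{ByszewskiKonieczny2016} is then that the return-time set to a small (nowhere dense, measure-zero-boundary) semialgebraic set, while it may be sparse, is nonetheless combinatorially structured in a way that forces sub-polynomial growth, or alternatively contains rich configurations (an \IPS\ set) that an arid automatic set cannot contain. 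More precisely, I expect to use the dichotomy announced in the introduction as Theorem~\ref{thm:Structure-Auto}: an automatic $\{0,1\}$-sequence is either arid (growth $O(\log^r N)$) or its support contains an \IPS\ set; and the companion paper shows a sparse generalised polynomial set contains no \IPS\ set. Combining these, a sparse set that is both automatic and generalised polynomial cannot contain an \IPS\ set, hence must be arid.

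The key steps, in order, are therefore: (1) apply Cobham's dichotomy (Proposition~\ref{automclassthm}) to reduce the claim to showing $E$ does not have polynomial growth; (2) apply the automatic-sets structure theorem (Theorem~\ref{thm:Structure-Auto}) to conclude that a non-arid automatic set contains an \IPS\ set in its support; (3) invoke the companion paper's result that a sparse generalised polynomial set is free of \IPS\ sets (this uses the nilsystem representation of Theorem~\ref{thm:BergelsonLeibman} together with the analysis of orbit hitting-times of semialgebraic sets from~\cite{ByszewskiKonieczny2016}); (4) derive a contradiction, concluding $E$ is arid. Some care is needed with the reduction to arithmetic progressions: aridity is preserved under restriction to and pullback along arithmetic progressions (Remark~\ref{kubekkawey} handles the base change, and Proposition~\ref{automclassthm}(iii) is visibly stable under such operations), and the generalised polynomial property is likewise preserved, so one may freely pass to the totally minimal setting of Remark~\ref{rmrk:BLnilgenpolythm}.

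The main obstacle is step~(3): showing that a \emph{sparse} generalised polynomial set contains no \IPS\ set. Sparseness is essential — a positive-density generalised polynomial set, e.g.\ a Bohr-type set, certainly does contain \IPS\ sets — so the argument must genuinely exploit that the target semialgebraic set $S$ in the Bergelson--Leibman representation has measure zero. This is exactly the content of the companion paper~\cite{ByszewskiKonieczny2016}, where the combinatorial structure of $\set{n}{T^n z \in S}$ for semialgebraic $S$ is analysed; the point is that hitting a nowhere-dense semialgebraic set along an \IPS\ set (which is generated by finite sums from an infinite set, hence highly additively structured) would force the orbit to accumulate on $\cl S$ along too large a set of times, contradicting the semialgebraic geometry. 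I would cite the relevant theorem from~\cite{ByszewskiKonieczny2016} directly rather than reproving it, and the remaining bookkeeping — translating between the language of sequences, sets of integers, and base-$k$ words, and verifying the stability of all relevant properties under the arithmetic-progression reductions — is routine.
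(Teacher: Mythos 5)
Your proposal is correct and is essentially the paper's own proof: the argument is exactly the combination of Theorem~\ref{thm:Structure-Auto} (a non-arid automatic set is \IPS) with Theorem~\ref{thm:GP-vs-IPS} from the companion paper (a sparse generalised polynomial set is not \IPS). The additional scaffolding you describe (Cobham's growth dichotomy, the arithmetic-progression reductions, the nilsystem heuristics for step~(3)) is not needed once those two theorems are cited, since the contradiction is immediate.
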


For the proof of this result, we need to use the notion of IPS sets introduced in \cite{ByszewskiKonieczny2016}.

\subsection*{IPS sets and automatic sequences}
The following notion generalises the classical notion of an \IP\ set that is of importance in combinatorial number theory and ergodic theory (for origin of the term \IP, which stands either for infinite-dimensional parallelepiped or idempotent, see, e.g., \cite{BergelsonLeibman-IP}). This notion is discussed in more detail in \cite{ByszewskiKonieczny2016} (in particular, an equivalent definition of \IPS\ sets in terms of ultrafilters is given there).

\begin{definition}[\IP\ and \IPS\ sets]\label{def:IPS-set}
	For a sequence $(n_i)_{i \in \NN} \subset \NN$, the corresponding set of \emph{finite sums} is 
\begin{equation}\label{eq:IP-def}
\FS(n_i) = \set{ n_\a }{ \a \subset \NN,\ 0 < \abs{\a} < \infty},
\end{equation}  
	where $n_\a = \sum_{i \in \a}n_i$. Any set containing a set of the form $\FS(n_i;N_t)$ for some $(n_i),\ (N_t)$ is called an \emph{\IPS\ set}.

	For a sequence $(n_i)_{i \in \NN} \subset \NN$ and shifts $(N_t)_{t \geq 1} \subset \NN_0$, the corresponding set of \emph{shifted finite sums} is 
\begin{equation}\label{eq:IPS-def}
\FS(n_i;N_t) = \set{ n_\a + N_t }{ t \in \NN,\  \a \subset \{1,2,\ldots, t\}, \a \neq \emptyset},
\end{equation}  
	where again $n_\a = \sum_{i \in \a}n_i$. Any set containing a set of the form $\FS(n_i;N_t)$ for some $(n_i),\ (N_t)$ is called an \emph{\IPS\ set}.
\end{definition}

\begin{example}\label{ex:IPS}
	Fix $k \geq 2$. Let $v_1, v_2 \in \Sigma_k^*$ be two distinct words with $\abs{v_1} = \abs{v_2} = l$, and let $u_0,u_1 \in \Sigma_k^*$ be arbitrary. Consider the set 
	$$
		E = \set{ [u_0 v_{j_1} v_{j_2} \cdots v_{j_t} u_1] }{ {j_i} \in \{1,2\} \text{ for } 1 \leq i \leq t \text{ and } t \geq 0}.
	$$
	Then $E$ is an \IPS\ set. Indeed, $E = \FS( n_i; N_t)$, where $N_t = [u_0 v_{1}^t u_1]_k$ and $n_i$ = $([v_2]_k - [v_1]_k)k^{(i-1)l + \abs{u_1}}$ (assuming, as we may, that $[v_2]_k > [v_1]_k$).
	If $[u_0]_k = [u_1]_k = [v_1]_k = 0$, then $E$ is an \IP\ set. 
\end{example}

\IPS\ sets occur in our work due to the following result. 

\begin{theorem}\label{thm:Structure-Auto}
	Let $E \subset \NN_0$ be an automatic set. Then either $E$ is arid or it is \IPS.
\end{theorem}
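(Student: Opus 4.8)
The plan is to work with a $k$-automaton $\mathcal{A}=(S,\Sigma_k,\delta,s_0,\{0,1\},\tau)$ producing the characteristic sequence of $E$, and to analyse it via the structural trichotomy in Proposition \ref{automclassthm}. If condition (iii) of that proposition holds, then $E$ is arid and we are done. So suppose it fails: there is an accessible and coaccessible state $s$ together with words $v_1,v_2\in\Sigma_k^*$ with $v_1v_2\neq v_2v_1$ and $\delta(s,v_1)=\delta(s,v_2)=s$. Accessibility gives $u_0\in\Sigma_k^*$ with $\delta(s_0,u_0)=s$, and coaccessibility gives $u_1\in\Sigma_k^*$ with $\tau(\delta(s,u_1))=1$. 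The idea is that $\mathcal{A}$ then accepts every word of the form $u_0\,x_1x_2\cdots x_t\,u_1$ with each $x_i\in\{v_1,v_2\}$ and $t\geq 0$, since reading any such word returns the automaton to state $s$ and then to an accepting state. This is exactly the shape of the set $E$ in Example \ref{ex:IPS}, modulo the fact that $v_1$ and $v_2$ need not have equal length.

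The first real step is to reduce to the equal-length case. Since $v_1v_2\neq v_2v_1$, the words $v_1,v_2$ are not powers of a common word, so (by a standard fact about free monoids) the words $v_1^{|v_2|}$ and $v_2^{|v_1|}$ are distinct, have the common length $|v_1||v_2|$, and still fix $s$ under $\delta$. Replacing $v_1,v_2$ by these powers, we may assume $|v_1|=|v_2|=l$; the set of accepted words $u_0\{v_1,v_2\}^{*}u_1$ shrinks but remains infinite of the desired form. Passing to $k^{l}$ in place of $k$ changes neither automaticity (Theorem \ref{automthm5}) nor aridity (Remark \ref{kubekkawey}), so WLOG $l=1$, i.e.\ $v_1,v_2\in\Sigma_k$ are two distinct digits. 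Now the subset $E'=\set{[u_0 v_{j_1}\cdots v_{j_t}u_1]_k}{j_i\in\{1,2\},\ t\geq 0}\subseteq E$ is precisely the \IPS\ set exhibited in Example \ref{ex:IPS}: writing $N_t=[u_0 v_1^{t}u_1]_k$ and $n_i=([v_2]_k-[v_1]_k)k^{(i-1)+|u_1|}$ (assuming $[v_2]_k>[v_1]_k$), one checks that replacing the $(t-i+1)$-th occurrence of $v_1$ by $v_2$ adds exactly $n_i$, so $E'=\FS(n_i;N_t)$. Hence $E'$, and therefore $E$, is \IPS.

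The second step is just to confirm the dichotomy: either (iii) of Proposition \ref{automclassthm} holds, in which case $E$ is arid, or it fails, in which case the argument above shows $E$ is \IPS; the two conclusions are not mutually exclusive but the statement only asserts ``either/or''. I would also remark that in the degenerate cases $E=\emptyset$ or $E$ finite, $E$ is trivially arid (rank $0$), so these cause no trouble.

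The main obstacle I anticipate is purely bookkeeping rather than conceptual: one must be careful that the automaton in Proposition \ref{automclassthm} reads digits most-significant-first (matching the $[\,\cdot\,]_k$ convention used when defining $E$ from a language), and that the reduction to a loop labelled by a single digit does not accidentally introduce leading zeros or otherwise corrupt the base-$k$ value map. Both are handled by the standard remarks following Definition \ref{automdef2} (leading zeros may be permitted without changing the notion of automaticity) and by Remark \ref{kubekkawey}. Once the non-commuting pair of loop words at a useful state is extracted from Proposition \ref{automclassthm}(iii), the embedding of an \IPS\ set is immediate from Example \ref{ex:IPS}.
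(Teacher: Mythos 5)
Your proposal is correct and follows essentially the same route as the paper: invoke Proposition \ref{automclassthm} to split into the arid case and the case of an accessible, coaccessible state with non-commuting loop words, normalise $v_1,v_2$ to equal length by taking powers, and read off the \IPS\ structure from Example \ref{ex:IPS}. The further reduction to $\abs{v_1}=\abs{v_2}=1$ via base $k^l$ is unnecessary (and slightly delicate since $u_0,u_1$ need not have length divisible by $l$), but it is harmless because Example \ref{ex:IPS} already covers arbitrary equal lengths.
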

\begin{proof} Assume that $E \subset \NN_0$ is automatic but not arid; we need to show that $E$ is \IPS. Let $\mathcal{A}=(S,\Sigma_k,\delta,s_0,\{0,1\},\tau)$ be a $k$-automaton with output $\{0,1\}$ which produces the characteristic sequence of $E$ when reading digits starting from the most significant one and ignoring the initial zeros.
	
	Since $E$ is not arid, neither is the set $A = \set{ w \in \Sigma^*_k}{ \tau( \delta(s_0,w)) = 1}$. Hence, by Proposition \ref{automclassthm}, there exists an accessible and coaccessible state $ s \in S$ and $v_1,v_2\in \Sigma_k^*$ such that $v_1v_2\neq v_2v_1$ and $\delta(s,v_1)=\delta(s,v_2)=s$. Replacing $v_1$ and $v_2$ by their powers and interchanging them if necessary, we may assume that $v_1$ and $v_2$ are of equal length $l = \abs{v_1} = \abs{v_2}$ and $[v_1]_k < [v_2]_k$. Pick $u_0, u_1 \in \Sigma_k^*$ so that $s= \delta(s_0,u_0)$, and $\tau(\delta(s,u_1))=1$. 
	
	The set $A$ contains all words of the form $w = u_0 v_{j_1} v_{j_2} \dots v_{j_t} u_1$, where ${j_i} \in \{1,2\}$ and $t \in \NN_0$. It follows that $A$ is \IPS\ (cf.\ Example \ref{ex:IPS}).
\end{proof}

In order to prove Theorem  \ref{thm:main-B2}, we need to recall one of the main results of \cite{ByszewskiKonieczny2016} (Theorem A), whose proof uses ergodic theory and the machinery of ultrafilters.

\begin{theorem}\label{thm:GP-vs-IPS}
	Let $E \subset \ZZ$ be a sparse generalised polynomial set. Then $E$ is not \IPS.
\end{theorem}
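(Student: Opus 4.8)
The plan is to realise $E$ dynamically and then reduce the claim to a structural statement about orbits on nilmanifolds. First I would apply Theorem~\ref{thm:BergelsonLeibman} to the characteristic function of $E$, obtaining a minimal nilsystem $(X,T)$, a point $z \in X$, and a semialgebraic set $S \subseteq X$ with $\mu_X(\partial S) = 0$ such that $E = \set{n \in \NN_0}{T^n z \in S}$. Since $E$ is sparse it has upper Banach density $0$; a minimal nilsystem is uniquely ergodic with invariant measure $\mu_X$ of full support, so Corollary~\ref{cor:density-uniform} forces $\mu_X(S) = 0$, whence $\mu_X(\cl S) \leq \mu_X(S) + \mu_X(\partial S) = 0$, and by full support $\cl S$ is a closed, nowhere dense, $\mu_X$-null set (still semialgebraic, of dimension strictly less than $\dim X$). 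The goal then becomes: the set of times at which the orbit of $z$ visits such a set $\cl S$ cannot be IPS.

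I would stress at the outset that the generalised polynomial hypothesis is essential and cannot be replaced by sparseness alone: $\FS(3^i)$ — the set of positive integers whose base-$3$ expansion omits the digit $2$ — is a genuine IP set of density zero. So the argument has to exploit the geometry of $\cl S$ inside $X$, namely semialgebraicity together with $\dim \cl S < \dim X$; this is the feature of generalised polynomials supplied by the Bergelson--Leibman correspondence.

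Assuming for contradiction that $E \supseteq \FS(n_i; N_t)$, I would first dispose of the shifts: choosing a subsequence along which $T^{N_{t_k}} z$ converges to some $y \in X$ and letting $k \to \infty$ in $T^{n_\alpha}\bigl(T^{N_{t_k}}z\bigr) \in S$ for each fixed finite nonempty $\alpha$, the closedness of $\cl S$ yields $T^{n_\alpha} y \in \cl S$ for \emph{every} finite nonempty $\alpha \subseteq \NN$ — that is, the orbit of $y$ meets the nowhere dense semialgebraic set $\cl S$ along all of $\FS(n_i)$. The core of the proof is to rule this out, by induction on $\dim X$ with the complexity of the semialgebraic set as a secondary parameter. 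The inductive step uses the theory of IP-limits of polynomial nilsequences: for an idempotent ultrafilter $p$ with $\FS(n_i) \in p$ the IP-limit $T^p$ is the identity on $X$ because nilsystems are distal, so nothing is gained from a single idempotent; but IP-limits of the polynomial orbit $n \mapsto T^n y$ taken along carefully chosen sub-IP-sets of $\FS(n_i)$ degenerate to polynomial orbits on a \emph{proper} sub-nilmanifold $W \subsetneq X$, which must still hit the semialgebraic set $\cl S \cap W$ along an IP set. One then checks that within $W$ this intersection again has dimension $< \dim W$ (the alternative, $W \subseteq \cl S$, is incompatible with $E$ being sparse), so the induction descends; the base case is a torus rotation, where one reduces to the elementary fact that the return times of an irrational rotation to a proper subtorus form an arithmetic progression, and its return times to a lower-dimensional semialgebraic set are too rigid (by cell decomposition for semialgebraic sets) to contain an IP set while remaining sparse.

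The hard part is precisely this inductive step — showing that an IP-limit along a suitably chosen sub-IP-set simultaneously lowers the dimension of the ambient nilmanifold, preserves semialgebraicity of the target, and keeps the new return-time set IP. This is exactly the structural result on orbits hitting semialgebraic subsets of nilmanifolds established in the companion paper \cite{ByszewskiKonieczny2016}, and it requires the full apparatus of polynomial sequences on nilmanifolds together with quantifier elimination for semialgebraic sets; everything else in the argument (compactness, unique ergodicity, distality, the idempotent ultrafilter reductions) is soft by comparison.
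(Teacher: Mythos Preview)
The paper does not actually prove this theorem: it is simply quoted as Theorem~A of the companion paper \cite{ByszewskiKonieczny2016}, with the remark that its proof ``uses ergodic theory and the machinery of ultrafilters.'' So there is no proof here for your proposal to be compared against; what you have written is an attempted outline of what the companion paper presumably does.

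As such an outline, your sketch is broadly reasonable and matches the ingredients the present paper alludes to: the Bergelson--Leibman representation, the compactness argument on the shifts $N_t$ to pass from \IPS\ to a genuine \IP\ set of hitting times for a limit point $y$, and an induction on the dimension of the nilmanifold driven by IP-limits along idempotent ultrafilters. You are also correct to identify the inductive step --- simultaneously lowering the ambient dimension, preserving semialgebraicity of the target, and keeping an \IP\ set of return times --- as the real content, and to locate it in \cite{ByszewskiKonieczny2016}. Two small caveats: the version of Theorem~\ref{thm:BergelsonLeibman} stated in this paper does not assert that the pieces $S_j$ are semialgebraic, so you are implicitly invoking the stronger form from \cite{BergelsonLeibman2007}; and your base-case assertion that the return times of an irrational rotation to a proper subtorus form an arithmetic progression is not literally correct (consider an affine subtorus in irrational position). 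Since you ultimately defer the substantive argument to the companion paper anyway, neither point is fatal to the sketch.
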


Theorem  \ref{thm:main-B2} and Theorem   \ref{thm:main-optimized} now follow quite easily.

\begin{proof}[Proof of Theorem \ref{thm:main-B2}]
Let $E$ be the set in Theorem \ref{thm:main-B2}. By Theorem \ref{thm:GP-vs-IPS}, $E$ is not \IPS. Hence, by Theorem \ref{thm:Structure-Auto}, it is arid.
\end{proof}

\begin{proof}[Proof of Theorem \ref{thm:main-optimized}]
	Suppose that $f \colon \NN_0 \to \RR$ is automatic and generalised polynomial. Let $\periodic(n)$ be the periodic function such that the set $Z = \set{n \in \NN_0}{f(n) \neq \periodic(n)}$ has $d^*(Z) = 0$. (The existence of $\periodic(n)$ is guaranteed by Theorem \ref{thm:main-weakly-periodic}.)
	
	Note that $Z$ is generalised polynomial and automatic (automaticity is clear; to see that $Z$ is generalised polynomial, compose $f-b$ with a polynomial $p$ such that $p(0) = 1$ and $p(x-y) = 0$ for $x \in f(\NN_0)$, $y\in \periodic(\NN_0)$, $x \neq y$).
	
	By Theorem \ref{thm:main-B2}, $Z$ is arid. Hence, by Lemma \ref{lem:v-sparse-growth} below, we have 
	$$\abs{ Z \cap [M,M+N) }  = O\bra{\log^r(N)}$$
	 for some $r \in \NN_0$ as $N \to \infty$.
\end{proof}

If Conjecture \ref{conjecture:main} is true then there are no nontrivial examples of arid generalised polynomial sets (indeed, by Theorem \ref{thm:main-dichotomy} non-existence of such sets is precisely equivalent to Conjecture \ref{conjecture:main}; see also Proposition \ref{automverysparse}). However, there are examples of generalised polynomial sets which exhibit some properties reminiscent of arid sets. We have already mentioned in this context that the set of Fibonacci numbers is a generalised polynomial set, and in  \cite[Theorems B \& C]{ByszewskiKonieczny2016} we have extended this to certain linear recurrences of order $2$ and $3$ as well as arbitrary sets whose size grows at a sublogarithmic rate.

It is important to note that in the statement of Theorem \ref{thm:Structure-Auto} it is not possible to replace \IPS\ sets with \IP\ sets or their translates (cf.\ Example \ref{ex:notIPrich}). We discuss this question further in the next section.

\section{Examples and properties of automatic sets}\label{sec:S-AUTO}

\subsection*{$\cB$-free sets}

In this subsection, we will discuss a simple class of examples of automatic sets, the $\cB$-free sets, which will allow us to show that in the statement of Theorem \ref{thm:Structure-Auto} it is in general not possible to replace \IPS\ sets with translates of \IP\ sets (Example \ref{ex:notIPrich}). 

\begin{example}\label{ex:B-free}
	Let $k \geq 2$, and let $\cB \subset \Sigma_k^*$ be a finite set of  `prohibited' words of length $\leq t$. A word $u \in \Sigma_k^*$ is $\cB$-free if $u$ contains no $b \in \cB$ as a factor. Accordingly, $n \in \NN_0$ is $\cB$-free if its base-$k$ expansion $(n)_k$  is $\cB$-free. Denote the set of $\cB$-free integers by $F_\cB$.
	
\begin{enumerate}
\item The set $F_\cB$ is $k$-automatic.
\item If $\cB \neq \emptyset$, then $F_\cB$ is sparse.
\item\label{wsciekly} If $ \sum_{b \in \cB} k^{-\abs{b}} \leq \frac{1}{16t}$, then $F_\cB$ is not arid.
\item If each $b \in \cB$ contains at least two non-zero digits, then $F_\cB$ is \IP.
\item\label{IPrichonlyzeroes} If some $b \in \cB$ consists only of $0$'s, then $F_\cB - m$ is not \IP\ for any $m \in \ZZ$.
\end{enumerate}
\end{example}

\begin{proof}
\begin{enumerate}
\item It is not difficult to explicitly describe a $k$-automaton which computes the characteristic function of $F_\cB$; alternatively, the claim follows immediately from Kleene's theorem.
\item We may assume that $\cB$ consists of a single string of length $t$. Then the probability that a randomly chosen word of length $m$ does not contain $b$ is at most $\bra{1 - {k^{-t}}}^{\floor{ m/t}}$. The claim easily follows from this.

\item We may assume $\cB \neq \emptyset$. Construct an undirected graph $G = (V,E)$ (we allow $G$ to have loops), where $V = \Sigma_k^{t}$, and $\{u,v\} \in E$ if $uv$ and $vu$ are both $\cB$-free. If $u_1, u_2, \dots, u_r$ is a walk in $G$, then $u_1 u_2 \cdots u_r$ is $\cB$-free. Assume that $G$ contains a walk $u_1,w,u_2$ of length $2$ with $u_1 \neq u_2$. With loss of generality, we may assume that $u_1\neq 0^t$ (otherwise, switch $u_1$ and $u_2$). Then for any $i_1,\dots,i_r \in \{1,2\}$ the word $v = u_1 w u_{i_1} w u_{i_2} w \cdots u_{i_r} w$ is $\cB$-free. Hence, $[v]_k \in F_\cB$ and we can see either directly or from Proposition \ref{automclassthm}  that $F_\cB$ is not arid. Thus, it remains to check that $G$ contains a length $2$ walk with distinct endpoints; for the sake of contradiction suppose that this is not the case.

Since each vertex has at most one neighbour  (including itself if $\{u,u\}$ is an edge), the graph is a disjoint union of paths of length $1$, loops, and vertices, and hence $\abs{E} \leq \abs{V} = k^{t}$. On the other hand, given $b \in \cB$, the number of pairs $(u,v) \in V^2$ such that $b$ appears in $uv$ or $vu$ is $< 4t k^{2t -\abs{b}}$, so 
$$
	\abs{E} \geq \frac{k^t(k^t+1)}{2} - 4 t k^{2t} \sum_{b \in \cB} k^{-\abs{b}}
	>   \frac{k^{2t}}{4}  \geq k^{t},
$$
(note that the assumption implies that $k^t \geq 16$), which gives a contradiction.

\item Let $n_i = k^{it}$. Then $\FS(n_i) \subset F_\cB$.
\item Suppose that $F_\cB$ contains $E+m$ for some \IP\ set $E$ and integer $m$. Replacing $E$ with a smaller \IP\ set if necessary, we may assume that $m > 0$. Since $E$ is \IP, for any $l \geq 0$ there exists $n \in E$ which is divisible by $k^l$. If $l$ is large enough (it suffices that $l>t+\floor{\log N/\log k}$) then $n+m$ is an element of $F_\cB$ whose base-$k$ expansion contains $t$ consecutive zeros, contradicting the assumption on $\cB$.\qedhere\end{enumerate}\end{proof}	
\begin{remark}
	A similar example was considered by Miller \cite{Miller-2012}, who gave sufficient conditions for $F_\cB$ to be infinite.
\end{remark}

\begin{example}\label{ex:notIPrich}
The set
	$$
		F_{00} = \set{n \in \NN_0}{\text{the binary expansion of $n$ does not contain $00$}}	$$
is $2$-automatic, sparse, not arid, and does not contain a translate of an \IP\ set.
\end{example}
\begin{proof} We see that $F_{00}$ is not arid by  Proposition \ref{automclassthm} or by a simple modification of the proof of \ref{ex:B-free}.\eqref{wsciekly}. The remaining claims follow directly from Example \ref{ex:B-free}.\end{proof}

The following two examples can be verified similarly.
	
\begin{example}\label{ex:Fib-IPrich}
The set
	$$
		F_{11}=  \set{n \in \NN_0}{\text{the binary expansion of $n$ does not contain $11$}}	$$
is $2$-automatic, sparse, not arid, and \IP.
\end{example}

\begin{example}\label{ex:BS-sparse-IPrich}
	The Baum--Sweet sequence (\cite{BaumSweet}) given by
	$$
		f_{\mathrm{BS}}(n) ={\ifbra{\text{the binary expansion of $n$ does not contain $10^l1$ for an odd integer $l$}}}.$$
		It takes the value $1$ on a set which is $2$-automatic, sparse, not arid, and \IP.
\end{example}

\subsection*{Translates of \IP\ sets}

Even though in general non-arid automatic sets need not contain translates of \IP\ sets, this is nevertheless the case under certain stronger assumptions on the set. 

\begin{proposition}\label{automshiftedipset} 
	Let $E \subset \NN_0$ be a $k$-automatic set. Assume that for every $w\in \Sigma_k^*$ there is an integer $n\in E$ such that $w$ is a factor of $(n)_k$. Then the set $E-m = \set{n-m}{n \in E}$ is \IP\ for some $m \in \N_0$.
\end{proposition}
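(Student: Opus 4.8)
The plan is to follow the same template as the proof of Theorem \ref{thm:Structure-Auto}, but to exploit the stronger hypothesis (that \emph{every} word appears as a factor of some $(n)_k$ with $n\in E$) in order to pin down a single repeatable block rather than two distinct ones. Let $\mathcal A=(S,\Sigma_k,\delta,s_0,\{0,1\},\tau)$ be a $k$-automaton producing the characteristic function of $E$ (reading from the most significant digit, tolerating leading zeros). Since $S$ is finite, a standard pigeonhole/pumping argument produces a state $s\in S$ and a non-empty word $w\in\Sigma_k^*$ with $\delta(s,w)=s$; more carefully, I want $s$ to be both accessible and coaccessible and $w$ to be chosen so that the loop at $s$ carries us through arbitrarily long stretches of the input. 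The key point is that the hypothesis lets me realise \emph{any} prescribed factor, so I should be able to choose $w\in\Sigma_k^*$ of positive length and words $u_0,u_1$ with $\delta(s_0,u_0)=s$, $\delta(s,w)=s$, $\tau(\delta(s,u_1))=1$, and hence $[u_0 w^t u_1]_k\in E$ for all $t\ge 0$.

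Next I would manufacture the translate of an \IP\ set. Set $l=|w|$ and consider the words $u_0 w^{\sum_{i\in\alpha} (\text{something})} \cdots$; concretely, writing $n_i = ([w w]_k - [w]_k\cdot k^{l}+\dots)$ — more cleanly, I expect to argue as follows. For a finite $\alpha\subset\NN$ and any $t$ exceeding $\max\alpha$, the word $u_0 w^{t} u_1$ lies in $A=\set{v\in\Sigma_k^*}{\tau(\delta(s_0,v))=1}$, and by inserting or deleting copies of $w$ at a fixed position one sees that the integers of the form $[u_0 w^{t} u_1]_k$ differ from one another by integer combinations of the quantities $c_j := ([w]_k)(k^{l}-1)^{-1}\cdot(\dots)k^{jl+|u_1|}$. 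The cleanest route is to mimic Example \ref{ex:IPS}/Example \ref{ex:B-free}(iv): take $n_i = \big([w^{i+1}u_1]_k - [w^{i}u_1]_k\big)$, i.e.\ the "cost" of one extra copy of $w$ read at depth $i$, and let $N = [u_0 u_1]_k$ be the base value. Then for any non-empty finite $\alpha$ the integer $N + \sum_{i\in\alpha}n_i$ is exactly $[u_0 w^{t}u_1]_k$ for suitable $t$ once $\alpha$ is an initial segment, and a short computation shows $\FS(n_i)\subset E - N$ (possibly after passing to a sub-\IP-set to handle the fact that $\alpha$ need not be an initial segment — here one uses that $w^{a}w^{b}=w^{a+b}$, so the order of the summands is irrelevant and $\FS$ really does land in $E-N$). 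Taking $m=N$ gives that $E-m$ contains an \IP\ set, hence is \IP.

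The main obstacle, as I see it, is organising the combinatorics so that an \emph{arbitrary} finite sum $\sum_{i\in\alpha}n_i$ (not just a prefix sum) corresponds to a genuine element of $E$. With a single repeated block $w$ this is actually favourable: powers of $w$ commute, so inserting $|\alpha|$ extra copies of $w$ at the designated spot yields $u_0 w^{|\alpha|}u_1$ regardless of which indices $\alpha$ contains, which is why choosing the $n_i$ as "marginal insertion costs at successively deeper positions" works cleanly, and why one might need the $n_i$ to be literally equal to $([w]_k)\,k^{l i + |u_1|}\,$-type scalars so that finite sums telescope correctly into base-$k$ representations. A secondary technical point is the passage from "there is a loop at some state" to "there is a loop at an accessible, coaccessible state through which a prescribed factor can be routed" — this is where the hypothesis "every $w\in\Sigma_k^*$ is a factor of some $(n)_k$, $n\in E$" does real work, since it guarantees the automaton has a strongly connected component meeting both the start state's reachable set and the accepting structure, and that this component is rich enough (contains cycles on which one can read any word up to conjugacy) to exhibit the required $u_0,w,u_1$. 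Once these structural facts are in place, the rest is the routine base-$k$ bookkeeping sketched above.
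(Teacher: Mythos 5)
There is a genuine gap, and it is fatal to the approach as written. Your plan is to find $u_0,w,u_1$ with $\delta(s_0,u_0)=s$, $\delta(s,w)=s$, $\tau(\delta(s,u_1))=1$, so that $[u_0w^tu_1]_k\in E$ for all $t$, and then to read off an \IP\ structure from the "marginal insertion costs" $n_i$. But the set $\set{[u_0w^tu_1]_k}{t\ge 0}$ is a one-parameter family, while $\FS(n_i)$ with $n_i\asymp k^{il}$ is not: for a non-initial-segment $\alpha$, the integer $N+\sum_{i\in\alpha}n_i$ has, in its base-$k$ expansion, blocks of \emph{zeros} $0^{|w|}$ at every position $i\notin\alpha$ below $\max\alpha$, and is not of the form $[u_0w^tu_1]_k$ for any $t$. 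Your remark that "powers of $w$ commute, so the order of the summands is irrelevant" does not address this: the problem is not the order of the summands but which positions are occupied, since $n_i$ lives at scale $k^{il}$. The paper's own Example \ref{ex:notIPrich} is a decisive counterexample to your strategy: the set $F_{00}$ of integers whose binary expansion avoids $00$ admits exactly the structure you construct (e.g.\ $w=1$ loops at an accessible, coaccessible state), yet no translate of $F_{00}$ is \IP. So no argument that only uses "a single accepted loop $u_0w^*u_1$" can possibly succeed.

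What the paper actually does is prove a stronger structural Claim: there are states $s,s'$ with $\tau(s)=1$ and a word $v$ (not a power of $0$) of length $l$ such that reading either $v$ or the zero block $0^l$ from $s$ or $s'$ behaves as in a two-state diagram ($v$ always returns to $s$, $0^l$ always lands in $s'$). This is precisely what lets one realise \emph{arbitrary} finite sums: the word $uv_1\cdots v_r$ with $v_i\in\{v,0^l\}$ and $v_r=v$ is accepted, and its value is $N+\sum_{i\in\alpha}n_i$ with $\alpha=\set{i}{v_i=v}$. Two further points you underestimate: the automaton must read from the \emph{least} significant digit (so that the fixed prefix $u$ sits at the bottom and the variable-length tail of leading zeros beyond $\max\alpha$ is harmless), and the hypothesis that every word occurs as a factor of some $(n)_k$, $n\in E$, is used in a much more delicate way than "the automaton has a rich strongly connected component": the paper builds a single routing word $w_1\cdots w_{n^2}$ visiting all reachable state pairs and embeds it, via the hypothesis, into an accepted word, which is what forces the existence of a return path to $s$ \emph{after} reading a long block of zeros. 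Without that step one cannot rule out the $F_{00}$-type obstruction.
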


\begin{proof}[Proof of Proposition \ref{automshiftedipset}]

Let $\mathcal{A}=(S,\Sigma_k,\delta,s_0,\{0,1\},\tau)$ be a $k$-automaton that produces the characteristic sequence of $E$ by reading the digits of $n$ \emph{starting with the least significant one}, allowing for leading zeros. We will denote the word $0\cdots 0\in \Sigma_k^*$ with $n$ zeros by $0^n$. We begin by proving the following claim.

\begin{claim*}
	There exist states $s,s'\in S$ with $\tau(s)=1$, an integer $l\in \N$, and a word $v \in \Sigma_k^l$ that is not a power of $0$ such that for $z = 0^l$ we have $\delta(s,z)=s'$, $\delta(s,v)=s$, $\delta(s',z)=s'$, $\delta(s',v)=s$. This is portrayed below: \begin{center}\begin{tikzpicture}[shorten >=1pt,node distance=2cm, on grid, auto] 
   \node[state] (s)   {$s$}; 
   \node[state] (s') [right=of s] {$s'$}; 
  \tikzstyle{loop}=[min distance=6mm,in=210,out=150,looseness=7]
  
    \path[->] 
    
    (s) edge [loop left] node {$v$} (s)
          edge [bend right] node [below]  {$z$} (s');
          
 \tikzstyle{loop}=[min distance=6mm,in=30,out=-30,looseness=7]
 \path[->]
    (s') edge [bend right] node [above]  {$v$} (s)
          edge [loop right] node  {$z$} (s');
\end{tikzpicture}\end{center} 
\end{claim*}

\begin{proof}[Proof of the claim]

Let $n=|S|$ be the number of states in $\mathcal{A}$. 
We first show a weaker statement, namely that there is a state $s$ with $\tau(s) = 1$ such that if $\tilde{s} =  \delta(s,0^n)$ denotes the state reached from $s$ after reading $n$ zeros, then we can return from $\tilde{s}$ to $s$ along a path not consisting only of zeros, that is $ \delta(\tilde{s} ,\tilde{v}) = s$ for some $\tilde{v} \in \Sigma_k^*$ that is not a power of $0$.

To prove this, we construct a word $w = w_1 w_2 \cdots w_{n^2}$ as follows. Enumerate all pairs in $S \times S$ as $(s_i, s_i')$ for $1 \leq i \leq n^2$. In the first step, if $\tilde{s}_1$ is reachable from $s_1$, let $w_1$ describe any path between the two, so that $ \delta(s_1, w_1) = s_1'$; otherwise, let $w_1 = \epsilon$. In general, if $w_1, \dots, w_{i-1}$ have been defined, choose $w_i$ so that $ \delta( s_i, w_1w_2 \cdots w_{i-1}w_i) = s_i'$ if possible (i.e., if $s_i'$ is reachable from $ \delta( s_i, w_1w_2 \cdots w_{i-1})$), and $w_i = \epsilon$ otherwise.

By the  assumption on the set $E$, there exists some $x, y \in \Sigma_k^*$ such that for $s =  \delta( s_0, xwy)$ we have $\tau(s) = 1$. Applying the same assumption with $w1$ in place of $w$, we may ensure that $y$ is not a power of $0$. It remains to show that we can return from $\tilde{s} =  \delta(s, 0^n)$  to $s$. For $0 \leq i \leq n^2$, let $r_i = \delta( s_0, x w_1 w_2 \cdots w_i)$ denote the intermediate states on the path from $s_0$ to $s$ labelled $xwy$, in particular $r_0 = \delta( s_0, x)$. 
The construction of $w$ is arranged so that for any $i$ with $s_i = r_0$, we have $r_{i} =  \delta( r_{i-1}, w_i) = \tilde{s}_i$, provided that $s_i'$ is reachable from $r_{i-1}$.

Choose $1\leq j\leq n^2$ such that $s_j = r_0$ and $s_j' = \tilde{s}$. Since $s$ is reachable from $r_{j-1}$ and $\tilde{s}$ is reachable from $s$, $\tilde{s}$ is reachable from $r_{j-1}$. Hence, the construction of $w$ guarantees that $r_{j} = \delta( r_{j-1}, w_{j}) = \tilde{s}$. In particular, $ \delta(\tilde{s}, \tilde{v}) = s$, where $\tilde{v} = w_{j+1} \dots w_{n^2}y$. Note that $\tilde{v}$ is not a power of $0$ since neither is $y$. This proves the weaker version of the claim.

To prove the stronger statement, note first that since $S$ has only $n$ states, there exist $0\leq i<j\leq n$ such that $ \delta(s,0^i)=\delta(s,0^j)$. Let $ m> i$ be any integer divisible by $(j-i)$ and put $s'=\delta(s,0^m)$. Since $m$ is divisible by $(j-i)$, we have $s'=\delta(s',0^{m})$. Because $\tilde{s}$ is reachable from $s'$ (actually, $ \delta(s', 0^n) = \tilde{s}$), there is a word $u$ (equal to $ 0^n v$, hence not a power of $0$) such that $ \delta (s', u) = s$. Take $v =  (0^m u)^{m}$ and $l = m(\abs{u} + m)$. The states $s,s'$ and the word $v$ (of length $l$) satisfy all the required conditions, namely $\delta(s,0^l)=s'$, $\delta(s, v)=s$, $\delta(s',0^l)=s'$, $\delta(s',v)=s$, and $\tau(s)=1$.
\end{proof}

To finish the proof of Proposition \ref{automshiftedipset}, we may assume that all states in $\mathcal{A}$ are accessible. Choose states $s$ and $s'$ and words $v$ and $z=0^l$ as in the statement of the claim. Let $u \in \Sigma_k^*$ be such that $\delta(s_0, u) = s$. For any word $w = u v_1 v_2 \cdots v_r$, where $v_i \in \{v,z\}$ for $1 \leq i < r$ and $v_r = v$, we have $ \delta (s_0, w) = s$, whence $[w^{\Rrev}]_k \in E$. It follows that $E$ contains $\FS(n_i; N)$, where $N = [u^{\Rrev}]_k$ and $n_i = k^{(i-1) l + \abs{u}} [v^{\Rrev}]_k$, $i\in\N$.
\end{proof}

Proposition  \ref{automshiftedipset} has the following amusing application which, however, does not require the full strength of Theorem \ref{thm:GP-vs-IPS}. (Similar results can be shown in greater generality.)

\begin{example}\label{prop:Heisenberg-exple}
	There exists a constant $c>0$ such that for any sequence $\varepsilon(n)$ which  is a rational power of a generalised polynomial such that $\varepsilon(n)\ll n^{-c}$ as $n \to \infty$, the set
	$$E = \set{n \in \NN_0}{ \fpa{ \sqrt{2} n \floor{ \sqrt{3 n}} } < \e(n) }$$
is not automatic.
\end{example}
\begin{proof}
	It is shown in \cite[Propositions 4.6 \& 4.8]{ByszewskiKonieczny2016} that $E$ is generalised polynomial, $E$ contains no translate of an \IP\ set, and that $E \cap (a \NN + b) \neq \emptyset$ for any $a\in \NN$, $b \in \NN_0$. 
	
	Suppose that $E$ were $k$-automatic. Since $E$ intersects nontrivially any arithmetic progression, it would satisfy the assumptions of Proposition \ref{automshiftedipset}, and thus would contain a translate of an \IP\ set, contradicting the previously mentioned results.	 
\end{proof}

\subsection*{Densities of symbols}  In this subsection, we prove a lemma on densities of occurrences of symbols in automatic sequences. As a corollary, we obtain the claim that  sparse automatic sequences take non-zero value at a set of \emph{Banach} density $0$.

The density of symbols for an automatic sequence is often uniform. A set $E \subset \NN_0$ has \emph{uniform density} $d = d(E)$ if $\abs{E \cap [M,M+N)}/ N \to d$ as $N \to \infty$ uniformly in $M$. For an automaton $\cA = (S,\Sigma_k,\delta,s_0,\Omega,\tau)$, a \emph{strongly connected component} is an automaton $\cA' =  (S',\Sigma_k,\delta',s'_0,\Omega,\tau')$, where $S' \subset S$ is non-empty, preserved under $\delta(\cdot,j)$ for all $j \in \Sigma_k$ and minimal with respect to these properties, $s'_0 \in S'$, and $\delta',\ \tau'$ are the restrictions of $\delta$ and $\tau$ to $S'$, respectively.

\begin{lemma}\label{lem:density-equivalence}
	Let $a\colon \NN_0 \to \Omega$ be a $k$-automatic sequence generated by an automaton $\cA =  (S,\Sigma_k,\delta,s_0,\Omega,\tau)$ reading input starting with the most significant digit, ignoring the initial zeros, and such that all the states are accessible. For $y \in \Omega$, let $\rho_y \geq 0$. Then the following conditions are equivalent:
	\begin{enumerate}
	\item\label{cond:54-1} For any $y \in \Omega$, the set $\set{n \in \NN_0	}{a(n) = y}$ has density $\rho_y$;
	\item\label{cond:54-2} For any $y \in \Omega$, the set $\set{n \in \NN_0	}{a(n) = y}$ has uniform density $\rho_y$;
	\item\label{cond:54-3} For any sequence $\tilde a' \colon \Sigma_k^* \to \Omega$ produced by a strongly connected component $\cA'$ of $\cA$ and for any $y \in \Omega$ we have
	$$
	 \abs{\set{ u \in \Sigma_k^L }{ \tilde a'(u) = y }}/k^L \to \rho_y \text{ as } L \to \infty.
	$$ 
	\end{enumerate}
\end{lemma}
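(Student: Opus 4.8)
The statement to prove is Lemma~\ref{lem:density-equivalence}, an equivalence of three conditions on an automatic sequence: natural density of symbols, uniform density of symbols, and a limiting frequency condition on strongly connected components.

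\textbf{Overall strategy.} The plan is to prove the cycle of implications \eqref{cond:54-2} $\Rightarrow$ \eqref{cond:54-1} $\Rightarrow$ \eqref{cond:54-3} $\Rightarrow$ \eqref{cond:54-2}. The first implication is trivial, since uniform density is a strengthening of natural density (take $M=0$). The substance is in the other two, and the key technical device will be to decompose an integer's base-$k$ expansion into a bounded ``transient'' prefix, after which the automaton has entered a strongly connected component, followed by a suffix of unbounded length processed entirely within that component. I would fix notation for this: for a word $u$, write $\delta(s_0,u)$, and note that since $S$ is finite, after reading sufficiently many digits the automaton's state lies in a terminal strongly connected component (a sink SCC in the condensation DAG of the transition graph). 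Because all states are accessible, every SCC that can be reached is reachable, and I would first reduce to terminal SCCs, observing that the non-terminal states contribute only a bounded ``stem'' to each input.

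\textbf{From \eqref{cond:54-1} to \eqref{cond:54-3}.} Suppose some SCC $\cA'$ violated the frequency condition in \eqref{cond:54-3} for some symbol $y$, i.e.\ $\abs{\{u \in \Sigma_k^L : \tilde a'(u) = y\}}/k^L$ does not converge to $\rho_y$. I would pick a word $w_0$ with $\delta(s_0, w_0)$ lying in the state set $S'$ of $\cA'$ (possible by accessibility), say $\delta(s_0,w_0) = s'_0$. Then for integers $n$ whose base-$k$ expansion begins with $w_0$ (ignoring leading zeros) and has total length $|w_0| + L$, the value $a(n)$ is exactly $\tilde a'(v)$ where $v$ is the length-$L$ suffix — here one must be slightly careful that the \emph{initial} state of $\cA'$ for computing $\tilde a'$ matters, but since $\cA'$ is strongly connected one can absorb any discrepancy by further lengthening $w_0$ so that $\delta(s_0,w_0)$ equals the designated start state $s'_0$ of $\cA'$. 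The set of such $n$ forms an interval-like block $[{[w_0]_k k^L},\, [w_0 1^{\cdots}]_k)$ of length $k^L$ (more precisely, all $n$ with $[w_0 0^L]_k \le n < [w_0 0^L]_k + k^L$), and within that block the proportion of $n$ with $a(n) = y$ is precisely $\abs{\{v \in \Sigma_k^L : \tilde a'(v) = y\}}/k^L$. If this proportion does not tend to $\rho_y$, then along a subsequence $L_i$ it stays bounded away from $\rho_y$; choosing $N$ comparable to $k^{L_i}$ and $M = [w_0 0^{L_i}]_k$, the count $\abs{\{n \in M \text{-block} : a(n) = y\}}/k^{L_i}$ is bounded away from $\rho_y$. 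To contradict \eqref{cond:54-1} (which only asserts a limit with $M=0$, i.e.\ over $[N]$), I would instead build a single increasing sequence of thresholds $N_i$ by concatenating: choose $N_i = [w_0 0^{L_i}]_k + k^{L_i}$ with $L_i$ growing fast enough that the block $[M_i, N_i)$ dominates $[N_{i-1}]$ in length; then $\abs{\{n < N_i : a(n)=y\}}/N_i$ is, up to $o(1)$, the average of the block proportion and a negligible contribution, so it cannot converge to $\rho_y$ — contradiction. (The growth-rate juggling here is the routine part I would not belabor.)

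\textbf{From \eqref{cond:54-3} to \eqref{cond:54-2}.} This is the crux. Given the SCC frequency condition, I want uniform density $\rho_y$ for $\{n : a(n) = y\}$. Here I would use the decomposition of $[M, M+N)$ into base-$k$ ``intervals'': a standard fact is that any interval $[M,M+N)$ can be partitioned into $O(k \log_k N)$ sub-blocks, each of the form $\{n : [w 0^j]_k \le n < [w 0^j]_k + k^j\}$ for some word $w$ and some $j$, plus the $O(1)$-many blocks with small $j$ that are negligible. On each such block of length $k^j$, provided $|w|$ is large enough that $\delta(s_0, w)$ has entered a terminal SCC $\cA'$, the proportion of $n$ with $a(n) = y$ equals $\abs{\{v \in \Sigma_k^j : \tilde a'(v) = y\}}/k^j = \rho_y + o(1)$ as $j \to \infty$ by hypothesis \eqref{cond:54-3}. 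The blocks where $j$ is too small, or where the prefix has not yet entered an SCC, have total length bounded by $k^{O(1)} \cdot (\text{number of transient states})$, hence are $O(1)$, negligible compared to $N$. Summing the near-$\rho_y$ proportions over the dyadic-like decomposition and weighting by block lengths gives $\abs{\{n \in [M,M+N) : a(n) = y\}} = \rho_y N + o(N)$ uniformly in $M$, since the $o(1)$ error on each block is uniform once $j$ exceeds a threshold $J(\epsilon)$, and blocks with $j \le J(\epsilon)$ contribute at most $O(J(\epsilon) k^{J(\epsilon)}) = o(N)$. That yields \eqref{cond:54-2}.

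\textbf{Expected main obstacle.} The delicate bookkeeping is in \eqref{cond:54-3} $\Rightarrow$ \eqref{cond:54-2}: one must (i) make precise the partition of an arbitrary window $[M,M+N)$ into $O(\log N)$ base-$k$ blocks with controlled remainder, (ii) handle the finitely many transient states correctly so that ``eventually the run is inside a terminal SCC'' is applied only after a bounded prefix, and (iii) ensure the $o(1)$ convergence in \eqref{cond:54-3} is invoked uniformly — i.e.\ quantify over SCCs (finitely many) and split each block sum at a common threshold. There is also a subtlety in \eqref{cond:54-1}$\Rightarrow$\eqref{cond:54-3}, namely that \eqref{cond:54-1} only controls $[N]$ and not arbitrary windows, so the negation must be propagated by a concatenation/telescoping argument rather than directly; this is where a little care with growth rates of the $L_i$ is needed. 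None of these is deep, but the lemma is the kind where the honest proof is longer than the intuition, so I would be careful to state the base-$k$ block decomposition as a separate sub-claim and reuse it in both directions.
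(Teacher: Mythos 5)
Your overall architecture (the cycle \eqref{cond:54-2} $\Rightarrow$ \eqref{cond:54-1} $\Rightarrow$ \eqref{cond:54-3} $\Rightarrow$ \eqref{cond:54-2}) matches the paper's, and your \eqref{cond:54-1} $\Rightarrow$ \eqref{cond:54-3} step is sound, though more roundabout than necessary: since the prefix $v$ is fixed, the window $[\,[v]_k k^L, ([v]_k+1)k^L)$ is the difference of two initial segments each of length $\Theta(k^L)$, so natural density controls the count on it directly by subtraction --- no contrapositive or telescoping of thresholds is needed.

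The genuine gap is in \eqref{cond:54-3} $\Rightarrow$ \eqref{cond:54-2}, in your treatment of inputs that have not entered a strongly connected component. You assert that the blocks whose prefix $w$ has $\delta(s_0,w)$ outside every (sink) strongly connected component ``have total length bounded by $k^{O(1)}\cdot(\text{number of transient states})$, hence are $O(1)$.'' This rests on the false premise that after a bounded prefix the run must be inside a terminal component. A deterministic automaton can remain outside every sink component on inputs of unbounded length: e.g.\ if $\delta(s_0,0)=s_0$ while $\delta(s_0,1)$ lies in a sink component, then every prefix $0^j$ is transient. Consequently, in your varying-length decomposition of $[M,M+N)$ a block of length as large as $k^{\floor{\log_k N}}$ (i.e.\ comparable to $N$) can have a transient prefix, and your argument gives no control over it. The correct statement --- and the one the paper proves and uses --- is that the set of $m$ with $\delta(s_0,(m)_k)$ outside every strongly connected component has upper Banach \emph{density zero} (the probability of staying transient after $L$ random digits decays exponentially, since from any transient state some word of length $\leq |S|$ reaches a sink component). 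The paper sidesteps your decomposition entirely: it tiles $[M,M+N)$ by blocks $[mk^L,(m+1)k^L)$ of a \emph{single} length $k^L$, with $O(k^L)$ boundary error, applies \eqref{cond:54-3} to the $(1-o(1))$-fraction of blocks whose prefix $(m)_k$ lands in a strongly connected component, bounds the remaining blocks trivially by $O(k^L)$ each, and lets $L\to\infty$ slowly relative to $N$. Your proof can be repaired by substituting this density-zero fact for your $O(1)$ claim (and switching to equal-length blocks, or recursively subdividing the transient blocks), but as written the key estimate fails.
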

\begin{proof}
	It is clear that \eqref{cond:54-2} implies \eqref{cond:54-1}. We will  show that \eqref{cond:54-1} implies \eqref{cond:54-3} and \eqref{cond:54-3} implies \eqref{cond:54-2}. Throughout, it will be convenient to assume that $\Omega = \{0,1\}$, which we may do without loss of generality. We then write $\rho$ for $\rho_1$.
	
	Suppose that \eqref{cond:54-1} holds, and take some $\tilde a'$ as in \eqref{cond:54-3}. There is some $v \in \Sigma_k^*$ such that $\tilde a'(u) = a([vu]_k)$, whence	 
\begin{align}
	\frac{1}{k^L} \sum_{u \in \Sigma_k^L} \tilde a'(u)
	=  \frac{1}{k^L} \sum_{n = [v]_k k^L}^{([v]_k+1) k^L-1} a(n) \to  \rho 
	\label{eq:762}
\end{align}
as $L \to \infty$. 

	Now suppose that \eqref{cond:54-3} holds.
	For any $N,M$ and $L$, we have
\begin{align}
	\frac{1}{N} \sum_{n = M}^{M+N-1} a(n) &= 
	 \frac{1}{N} \sum_{m=\floor{M/k^L} }^{\floor{(M+N)/k^L}} \sum_{n = 0 }^{k^L-1} a(m k^L + n) + O(k^L/N) 	\label{eq:764}
\end{align} uniformly in $M$. For any $m \in \NN_0$, consider the sequence $\tilde a'_m \colon \Sigma_k^* \to \Omega$ given by $\tilde a'_m(u) = a( [(m)_ku]_k)$, so that if $u \in \Sigma_k^L$, then $a(m k^L + [u]_k) = \tilde a'_m(u)$. Note that $\tilde a'_m$ is produced by the automaton $\cA'$ that is obtained from $\cA$ by changing the initial state to $s'_{0} = \delta(s_{0},(m)_k)$.
	
	If $\delta(s_{0},(m)_k)$ lies in a strongly connected component of $\cA$, then we may use \eqref{cond:54-3} to estimate the inner sums in \eqref{eq:764}:
	$$\sum_{n = 0 }^{k^L-1} a(m k^L + n) = k^L \rho + o(k^L)$$
 as $L \to \infty$ (where the error term is uniform with respect to $m$, since there are only finitely many possible sequences $\tilde a'_m$). It is an easy exercise to check that the set of $m \in \NN_0$ such that $\delta(s_{0},(m)_k)$ does not lie in a strongly connected component of $\cA$ has upper Banach density $0$. Estimating the inner sums in \eqref{eq:764} corresponding to such $m$ trivially by $O(k^L)$, and letting $L \to \infty$ slowly enough  so that $k^L/N \to 0$, we conclude that 
 $$\frac{1}{N} \sum_{n = M}^{M+N-1} a(n) = \rho + o(1)$$
 as $N \to \infty$ uniformly in $M$. Hence, \eqref{cond:54-2} holds. 		
\end{proof}

\subsection*{Linear recurrence sequences}\label{sec:Recurrent}

We have already noted that the set of values of a linear recurrence sequence can be a generalised polynomial set. This is the case for the Fibonacci sequence; for more information, see \cite[Theorem B]{ByszewskiKonieczny2016}. In contrast, we show that the set of values of a linear recurrence sequence is not automatic, except for trivial examples.
In the proof, we apply Theorem \ref{thm:Structure-Auto}.

\begin{proposition}\label{prop:recurrent=>not-auto}
	Let $(a_m)_{m\geq 0}$ be an $\N$-valued sequence satisfying a linear recurrence of the form
	\begin{equation}
	\label{eq:380} 
		a_{m + n} = \sum_{i=1}^{n} c_i a_{m+n-i}, \quad  m\geq 0
	\end{equation}
	with integer coefficients $c_i$. Suppose that  for some $k$ the set $E = \set{ a_m}{ m \in \NN_0}$ is $k$-automatic. Then $E$ is a finite union of the following standard sets: linear progressions $\set{ am +b}{ m \in \NN_0}$ with $a,b\in \N_0$; exponential progressions $\set{ a k^{ t m} + b}{m \in \NN_0}$ with $a,b\in \Q$ and $t\in \N$; and finite sets.
\end{proposition}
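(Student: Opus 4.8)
The plan is to use the growth dichotomy for automatic sets (Cobham, Proposition~\ref{automclassthm}) together with Theorem~\ref{thm:Structure-Auto}, and to analyse the possible growth rate of $E = \set{a_m}{m \in \NN_0}$ by studying the asymptotics of linear recurrence sequences. First I would reduce to the case where $(a_m)$ is strictly increasing and unbounded; if $(a_m)$ is bounded then $E$ is finite, and if $(a_m)$ is eventually periodic or grows only polynomially, a separate (easier) analysis applies. The key classical input is the exponential-polynomial description of solutions of \eqref{eq:380}: $a_m = \sum_j P_j(m) \lambda_j^m$ where $\lambda_j$ are the roots of the characteristic polynomial $x^n - \sum_i c_i x^{n-i}$ and $P_j$ are polynomials. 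Since the $a_m$ are positive integers, one expects a dominant real root $\lambda > 1$ (by a Perron--Frobenius-type argument, after splitting into residue classes of $m$ if necessary), so that $a_m \asymp m^{s} \lambda^m$ for some integer $s \geq 0$; in particular $\abs{E \cap [N]} \asymp \log N$ when $s = 0$ and more generally $\abs{E \cap [N]}$ grows like a power of $\log N$ only in degenerate situations.

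The main dividing line is whether $\lambda$ is (a rational power of) $k$ or not. If $E$ is $k$-automatic, then by Proposition~\ref{automclassthm} either $E$ is arid — hence $\abs{E\cap[N]} = O(\log^r N)$, which forces the exponential growth rate $\lambda$ to be a power of $k$, i.e.\ $\lambda = k^{t}$ up to the usual multiplicative-dependence normalisation (Theorem~\ref{automthm5}, Remark~\ref{kubekkawey}) — or $\abs{E \cap [N]} \gg N^\alpha$, which contradicts the (at most) logarithmic growth of the range of an exponentially growing sequence unless $(a_m)$ actually grows polynomially, i.e.\ all dominant roots are roots of unity and $a_m = P(m)$ eventually is a polynomial. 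In the polynomial case, $E$ is automatic and has positive density inside an arithmetic progression only if $\deg P \leq 1$, and one checks directly via Corollary~\ref{prop:poly=>not-auto} (or the pumping lemma) that a $k$-automatic set of the form $\set{P(m)}{m}$ with $\deg P \geq 2$ is impossible; this leaves linear progressions. In the arid/exponential case, having pinned down $\lambda = k^t$, I would feed the precise asymptotic $a_m = a k^{tm} + (\text{lower order})$ back in: the lower-order terms, being $o(k^{tm})$, must eventually vanish or themselves be controlled, and here I would invoke Theorem~\ref{thm:Structure-Auto} to say $E$ is \IPS\ or arid — an \IPS\ structure $\FS(n_i;N_t)\subset E$ is incompatible with $\abs{E\cap[N]} = O(\log N)$ when there are at least two summands, so $E$ must be arid of rank $1$, i.e.\ essentially $\set{a k^{tm}+b}{m}$, possibly after splitting $m$ into residue classes and peeling off finitely many initial terms.

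The main obstacle I anticipate is the case analysis around the \emph{structure} of the dominant roots: one must handle the possibility of several roots of equal maximal modulus (complex conjugate pairs, or $\lambda$ and $-\lambda$, or roots differing by a root of unity), since these produce oscillation in $a_m$ and the sequence need not be monotone or even eventually positive on all of $\NN_0$ — which is why splitting $m$ into arithmetic progressions (permissible since a restriction of an automatic sequence to an arithmetic progression is automatic, \cite[Theorem 6.8.1]{AlloucheShallit-book}) is essential, and one must argue that on each such progression either the sequence stabilises to the exponential-progression shape or the whole range is forced to be small/degenerate. A secondary technical point is justifying that $a_m = a k^{tm} + b$ \emph{eventually exactly} (not merely asymptotically): this is where one uses that $E$ is arid of rank exactly $1$ together with the explicit form \eqref{eq:v-sparse-def-Sigma} of basic arid sets, reading off that the base-$k$ expansions of elements of $E$ are of the shape $v_0 w^{l} v_1$, which translates precisely into $[v_0 w^l v_1]_k = a k^{tl} + b$ for suitable rational $a,b$ and $t = \abs{w}$.
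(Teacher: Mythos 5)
Your overall skeleton matches the paper's: write $a_m$ in exponential--polynomial form, split into arithmetic progressions to remove degeneracies, and treat the case of a dominant root of modulus $>1$ (where $\abs{E\cap[N]}\ll\log N$, so Proposition~\ref{automclassthm} gives aridity of rank $1$ and hence the exponential-progression shape) separately from the case where all roots are roots of unity (where $E$ becomes a union of polynomial value sets). The genuine gap is in how you dispose of polynomial progressions of degree $\geq 2$. First, Corollary~\ref{prop:poly=>not-auto} concerns the \emph{sequence} $n\mapsto\floor{p(n)}\bmod m$ being automatic; it says nothing about the \emph{value set} $\set{P(m)}{m\in\NN_0}$ being an automatic set, which is a different object. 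Indeed the set of squares has counting function $\asymp N^{1/2}$, which sits comfortably in the $N^{\alpha}$ branch of Cobham's growth dichotomy, so neither that corollary nor the growth gap rules it out. Second, ``positive density inside an arithmetic progression'' is not a property automatic sets need have (the powers of $k$ are automatic), so that criterion proves nothing. Third --- and this is the real difficulty --- you cannot treat $\set{P(m)}{m}$ in isolation: only the union $E$ of linear progressions, polynomial progressions, exponentially growing pieces and a finite set is known to be automatic, so a pumping argument must derive a contradiction for the whole union. This is the technical heart of the paper's proof: one pumps the block-of-zeros structure of the base-$k$ expansion of $p(k^t)$ to manufacture, for each $S$, a set $Q(S)\subset E$ of $\gg S^{d-1}$ elements confined to an interval of length $O(N(S)^{\delta})$ near $N(S)=k^{lS+O(1)}$; the mean value theorem shows each degree-$\geq 2$ polynomial progression meets each $Q(S)$ at most once, the logarithmically growing pieces contribute $O(T)$ in total, and summing over $T_0\leq S\leq T$ gives an upper bound $\ll T$ against the lower bound $\gg T^2$. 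Your proposal contains neither this nor any substitute argument, so the polynomial case is not closed.

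A secondary, fixable issue: the existence of a dominant real root via ``Perron--Frobenius'' is not justified for a general integer linear recurrence; the paper instead invokes the Evertse / van der Poorten--Schlickewei theorem to get $a_m=\abs{\lambda_1}^{m+o(m)}$, which is what actually yields $\abs{E\cap[N]}\ll\log N$ in the presence of several roots of maximal modulus. Also, your appeal to \IPS\ sets via Theorem~\ref{thm:Structure-Auto} is both unnecessary and shaky --- a set of the form $\FS(n_i;N_t)$ is not incompatible with polylogarithmic counting by cardinality considerations alone if the $n_i$ grow fast enough; the clean route is the implication from logarithmic growth to aridity in Proposition~\ref{automclassthm}, which is what the paper uses. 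Your final step, reading off $\set{ak^{tl}+b}{l\in\NN_0}$ from a basic arid set of rank $1$, is correct and is exactly how the paper concludes; the detour through identifying $\lambda$ with a power of $k$ is not needed for that.
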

\begin{proof}
	We first claim that there exists a representation of $E$ as a finite union
	\begin{equation}\label{eq:381}
	E = \bigcup_{i=1}^{K_{\text{lin}}} L_i \cup \bigcup_{i=1}^{K_{\text{poly}}} P_i \cup \bigcup_{i = 1}^{K_{\text{exp}}} E_i \cup F,\end{equation}
	where $F$ is finite, $L_i = \set{ a_i m + b_i}{m \in \NN_0}$ are arithmetic progressions, $P_i = \set{ p_i(m) }{m \in \NN_0}$ are value sets of polynomials $p_i(x) \in \ZZ[x]$ with $\deg p_i \geq 2$, and $E_i$ have exponential growth in the sense that $\abs{E_i \cap [N]} \ll \log N$.

	In order to prove this claim, we begin by noting that any restriction of $(a_m)$ to an arithmetic progression $a^{(h,r)}_m = a_{hm + r}$ obeys some (minimal length) linear recurrence 
	$$
	a^{(h,r)}_{m + n'} = \sum_{i=1}^{n'} c^{(h,r)}_i a^{(h,r)}_{m+n'-i}, \quad m\geq 0
	$$
	with $n'=n'(h,r) \leq n$. Moreover, there exists a choice of $h$ such that each of that each $a^{(h,r)}_m$ is either identically zero or non-degenerate, in the sense that the associated characteristic polynomial $q^{(h,r)}(x) = x^{n'} - \sum_{i=1}^{n'} c^{(h,r)}_{i} x^{n'-i}$ has no pair of roots $\lambda,\mu \in \CC$ such that $\lambda/\mu$ is a root of unity
	(see, e.g., \cite[Theorem 1.2]{EverestPoortenShparlinskiWard-book} for a much stronger statement).  
	Hence, for the purpose of showing the existence of a representation of the form \eqref{eq:381}, we may assume  that $(a_m)$ is non-degenerate. Suppose also that $n$ is minimal, and let $\lambda_1,\dots,\lambda_r$ be the roots of $q(x) = x^n - \sum_{i=1}^n c_{i} x^{n-i}$ with $\abs{\lambda_1} \geq \abs{\lambda_2} \geq \dots$. Note that either $E$ is finite or $\abs{\lambda_1} \geq 1$.
	
	If $\abs{\lambda_1} > 1$, then by the result of Evertse \cite{Evertse-1984} and van der Poorten and Schlickewei \cite{PoortenSchlickewei-1991} (see \cite[Theorem 2.3]{EverestPoortenShparlinskiWard-book}), we have $a_m = \abs{\lambda_1}^{m + o(m)}$ as $m \to \infty$. Hence, $E$ has exponential growth, and we are done.
	
	Otherwise, if $\abs{\lambda_1} =1$, then for all $j$ we have $\abs{\lambda_j} = 1$ or $\lambda_j=0$. Kronecker's theorem \cite{Kronecker-1857} (or a standard Galois theory argument) shows that if $\lambda$ is an algebraic integer all of whose conjugates have absolute value $1$, then $\lambda$ is a root of unity. Using the general formula for the solution of a linear recurrence, we may write for sufficiently large $m$
	$$
		a_m = \sum_{j=1}^r \lambda_j^m p_j(m) = \sum_{j=1}^r b_j(m) p_j(m),
	$$
	where $p_j(x)$ are polynomials and $b_j(m)$ are periodic. Splitting $\NN_0$ into arithmetic progressions where $b_j(m)$ are constant, we conclude that $E$ is a finite union of value sets of polynomials. This again produces a representation of the form \eqref{eq:381}. 
	
	Such a representation is not unique. Splitting $P_i$ into a finite number of subprogressions and discarding those which are redundant, we may assume that $P_i \cap L_j = \emptyset$ for any $i,j$. Likewise, we may assume that $E_i \cap L_j = F \cap L_j =\emptyset$ for any $i,j$. Fix one such representation subject to these restrictions. The set 
	
	$$E' =\bigcup_{i=1}^{K_{\text{poly}}} P_i \cup \bigcup_{i = 1}^{K_{\text{exp}}} E_i \cup F = E \setminus \bigcup_{i=1}^{K_{\text{lin}}} L_i $$ is again $k$-automatic; it will suffice to show that $E'$ is a union of the standard sets mentioned above.
	
	We claim that $K_{\text{poly}} = 0$, i.e., the representation of $E$ uses no polynomial progressions of degree $ \geq 2$. Suppose for the sake of contradiction that $P = \set{p(m)}{m \in \NN_0}$ appears in one of the sets $P_i$, and write $p(m) = \sum_{i=0}^d c_i m^i$, where $c_i \in \ZZ$. Replacing $p(m)$ with $p(m+r)$ for a suitably chosen $r \in \NN_0$, we may assume that $c_i > 0$ for $0 \leq i \leq d$. For sufficiently large $t$, we have $p(k^t) = [u_d 0^{t - t_0} u_{d-1} 0^{t-t_0} u_{d-2} \cdots u_1 0^{t-t_0} u_{0}]_{k}$, where $t_0$ is a constant and $u_{i}$ is the base-$k$ expansion of $c_i$, padded by $0$'s so as to have $\abs{u_i}=t_0$. Since $p(k^t) \in E'$, from the pumping lemma \ref{lem:pumping} it follows that there is $l \in \NN$ such that for any $s_1,\dots,s_d\in\NN$ it holds that
	$$
		n(s_1,\dots, s_d) := [u_{d} 0^{ls_d} u_{d-1} 0^{ls_{d-1}} \cdots u_1 0^{ls_1} u_0]_{k} \in E'.
	$$
	For sufficiently large $S$ and a small absolute constant $\delta$ to be determined later, consider the set 
	$$
		Q(S) = \set{n(s_1,\dots,s_d)}{ s_i \in \NN, s_1 + \ldots + s_d = S, s_d \geq (1-\delta) S }, 
	$$
 	and put $N(S) := n(1,\dots,1,S-d+1) = \min Q(S)$ (for large $S$). Note that $N(S) = k^{l S + O(1)}$ and that $\max Q(S) = N(S) + O(N(S)^\delta)$. 
 	For a fixed $T_0$ and $T \to \infty$, we shall consider the cardinality of the set $ Q(T_0,T) = \bigcup_{T_0 \leq S \leq T} Q(S)$. By an elementary counting argument, we find 
 	\begin{equation}
 	\label{eq:438}
 	\abs{ Q(T_0,T)} \gg T^{d} \gg T^2. 
 	\end{equation}
 	
 	To obtain an upper bound, we separately estimate $\abs{ Q(S) \cap P_i}$ and $\abs{Q(T_0,T) \cap E_j}$ for each $i,j$.
 	
 	Suppose that $n, n' \in Q(S) \cap P_i$ with $n' > n$, so in particular $n = p_i(m)$ and $n' = p_i(m')$ for some $m,m' \gg N(S)^{1/\deg p_i}$. We then have the chain of inequalities:
 	$$
 		N(S)^\delta \gg n'-n = p_i(m') - p_i(m) \geq \min_{x \in [m,m']} \abs{ p_i'(x) } \gg N(S)^{\frac{ \deg p_i - 1}{\deg p_i}}, 
 	$$
	which is a contradiction for sufficiently large $S$, provided that $\delta < \frac{\deg p_i - 1}{\deg p_i}$ (which will hold if we put $\delta = \frac{1}{3}$). Thus, $\abs{ Q(S) \cap P_i} \leq 1$. 
	
	As for $Q(T_0,T) \cap E_j$, from the bounds on growth of $E_j$ we immediately have
	\begin{equation}
 	\label{eq:439}
 	\abs{ \bigcup_{T_0 \leq S \leq T} Q(T_0,T) \cap E_i} \ll \abs{E_i \cap [2N(T)]} \ll T.
 	\end{equation}

	 In total, using \eqref{eq:438} and \eqref{eq:439} we find that
 	\begin{equation}
 	\label{eq:440}
 	\abs{ Q(T_0, T) } \leq \sum_{S=T_0}^{T} \sum_{i=1}^{K_{\text{poly}}} \abs{ Q(S) \cap P_i} + \sum_{i=1}^{K_{\text{exp}}} \abs{ Q(T_0, T) \cap E_i} + O(1) \ll T,
 	\end{equation}
	 contradicting the previously obtained bound $\abs{ Q(T_0,T)} \gg T^2$. It follows that indeed $K_{\text{poly}} = 0$.
	
	Since $E'$ contains no polynomial or linear progressions, we have $\abs{E' \cap [N]} \ll \log N$. It follows from Proposition \ref{automclassthm} that $E'$ must be $k$-arid of rank $1$. Since all basic arid sets of rank $1$ are of the form described in the statement of the theorem, we are done. 
\end{proof}

\section{Proof of Theorem \ref{thm:main-dichotomy}}\label{sec:Dichotomy}

In this section, we derive Theorem \ref{thm:main-dichotomy} from Theorem \ref{thm:main-optimized}. Our argument is purely combinatorial and can be entirely phrased in terms of finite automata with no further recourse to dynamics. 

\begin{proposition}\label{jeszczejednoespresso-A} 
Let $A \subset \Sigma_k^*$ be an infinite arid set. Then there exists $v \in \Sigma_k^*$ such that $A \cap v \Sigma_k^*$ takes the form $$A \cap v \Sigma_k^*=\bigcup_{i=1}^p \{ v w^l u_i \mid l \in \NN_0\},$$ where $p\geq 1$, $v,w,u_i \in \Sigma_k^*$ and $w \neq \epsilon$. In particular, $A\cap v\Sigma_k^*$ is arid of rank $1$.

 Likewise,  there exists $\tilde{v}\in\Sigma_k^*$ such that $A \cap \Sigma_k^* \tilde{u}$ takes the form  $$A \cap \Sigma_k^* \tilde{u}=\bigcup_{i=1}^{\tilde{p}} \{ \tilde{v}_i  (\tilde{w})^l \tilde{u} \mid l \in \NN_0\},$$ where $\tilde{p}\geq 1$, $\tilde{v}_i,\tilde{w},\tilde{u} \in \Sigma_k^*$ and $\tilde{w} \neq \epsilon$.\end{proposition}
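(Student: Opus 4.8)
The plan is to forget the basic‑arid decomposition of $A$ and work with an automaton, locating a single state from which the accepted language already has the required shape. Fix a DFA $\mathcal M=(S,\Sigma_k,\delta,s_0,\{0,1\},\tau)$ recognizing $A$, with all states accessible (trim the rest). Since $A$ is arid, $\mathcal M$ satisfies condition (iii) of Proposition \ref{automclassthm}: no accessible and coaccessible state $s$ admits $v_1,v_2\in\Sigma_k^*$ with $v_1v_2\neq v_2v_1$ and $\delta(s,v_1)=\delta(s,v_2)=s$. For any $v\in\Sigma_k^*$ one has $A\cap v\Sigma_k^*=v\cdot L(\mathcal M_s)$, where $s=\delta(s_0,v)$ and $\mathcal M_s$ is $\mathcal M$ with start state $s$; so it suffices to produce an accessible and coaccessible $s$ with $L(\mathcal M_s)=\bigcup_{i=1}^p\{w^l u_i\mid l\in\NN_0\}$ for some $w\neq\epsilon$ and $p\geq 1$, and then take any $v$ with $\delta(s_0,v)=s$. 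The statement for suffixes will then follow by the obvious reversal symmetry: $A^{\Rrev}=\{x^{\Rrev}\mid x\in A\}$ is again arid and infinite, the prefix case applies to it, and reversing the resulting description back yields the suffix normal form with $\tilde u=\tilde v^{\Rrev}$.

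First I would choose the state. Let $G$ be the subgraph of $\mathcal M$ induced on the coaccessible states. Since $A$ is infinite, $G$ has a cycle, hence a non-trivial strongly connected component; among these, pick one, $C$, that is minimal, i.e. no other non-trivial strongly connected component of $G$ is reachable from $C$ (possible since the condensation of $G$ is a finite DAG). The first key step is that $C$ must be a simple cycle: if some $p\in C$ had two outgoing edges labelled by distinct letters $x_1\neq x_2$ both landing in $C$, then concatenating each with a path inside $C$ from its endpoint back to $p$ (such paths exist and stay in $C$, since any path between two states of a strongly connected component lies in that component) would give two self-loops at $p$ with distinct first letters, hence non-commuting — impossible at the accessible and coaccessible state $p$ by condition (iii). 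So each state of $C$ has a unique out-edge remaining in $C$, and strong connectedness forces $C$ to be a single cycle. Fix $s\in C$ and let $w$ be the (nonempty) word read on going once around $C$ from $s$ back to $s$.

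The second key step is to read off $L(\mathcal M_s)$. A run from $s$ accepting $x$ must go around $C$ some $l\geq 0$ times, possibly continue $d<|w|$ further steps along $C$, and then either halt (if that state of $C$ is accepting) or leave $C$; once it leaves $C$ it can never return (as $C$ is a proper strongly connected component), and by minimality of $C$ the part of $G$ reachable from $C$ and disjoint from it is acyclic, so there are only finitely many exit paths to accepting states. Collecting the finitely many words $z$ of the shape ``length-$d$ prefix of $w$, followed by an exit path to an accepting state'' (including the bare prefixes when a state of $C$ itself accepts) into a finite set $Z$ — nonempty because $s$ is coaccessible — gives $L(\mathcal M_s)=\bigcup_{z\in Z}\{w^l z\mid l\in\NN_0\}$. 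Hence, for any $v$ with $\delta(s_0,v)=s$, $A\cap v\Sigma_k^*=\bigcup_{z\in Z}\{v\,w^l z\mid l\in\NN_0\}$, which is exactly the asserted form with $\{u_1,\dots,u_p\}=Z$; in particular it is arid of rank $1$.

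The main obstacle is exactly the choice of $s$, together with getting the \emph{exact} normal form (a single $v$ and a single $w$ serving all pieces at once). Intersecting with $v\Sigma_k^*$ for $v$ a long prefix of one basic arid piece of $A$ can easily leave several free exponents — e.g. restricting $\{w^a\}\cup\{w^a c\,d^b\}$ to $w^N\Sigma_k^*$ stays rank $2$ — so one genuinely needs the automaton viewpoint and the \emph{minimality} of the strongly connected component $C$; the arid hypothesis enters precisely through condition (iii) of Proposition \ref{automclassthm}, which is what makes $C$ a plain cycle and, via the minimality choice, ensures nothing cyclic lies downstream. Once $s$ is chosen correctly, the remaining work (the finiteness of $Z$, the reversal argument for suffixes) is routine bookkeeping.
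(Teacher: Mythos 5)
Your proof is correct, but it takes a genuinely different route from the paper's. The paper argues purely combinatorially on words: it proves a quotient lemma (for $B$ arid of rank $r$ and $y\neq x_2$ of equal length, $(x_1y^mx_2)^{-1}B$ has rank $\leq r-1$ for large $m$) and then inducts on the rank of $A$, peeling off one free exponent at a time. You instead work with a trim DFA for $A$, invoke condition (iii) of Proposition \ref{automclassthm} to show that every non-trivial strongly connected component of the coaccessible part is a bare cycle, choose a sink such component $C$, and read the normal form off the acyclic part downstream of $C$; the checks you leave implicit (paths between states of an SCC stay in the SCC, accepting runs never revisit $C$ after leaving it, a finite DAG carries finitely many paths) all go through. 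What your route buys is a cleaner structural picture and no induction; what it costs is that the combinatorial content is outsourced to the implication (i)$\Rightarrow$(iii) of Proposition \ref{automclassthm}, which the paper only cites from the literature, whereas the paper's own proof of this proposition is self-contained. Two small points worth flagging: Proposition \ref{automclassthm} is stated for subsets of $\NN_0$ while you apply it to the language $A\subset\Sigma_k^*$ directly --- this matches how the paper itself uses it in the proof of Theorem \ref{thm:Structure-Auto}, but you should say so; and when you deduce that a state $p\in C$ cannot have two out-edges into $C$, you should note explicitly that the two return words are nonempty and begin with distinct letters, which is why they fail to commute.
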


\begin{proof} Since the notion of an arid set is preserved under the reversal operation, it is sufficient to prove the former statement. 
	For $B \subset \Sigma_k^*$ and $v \in \Sigma_k^*$, put $v^{-1}B = \set{ u \in \Sigma_k^*}{ vu \in B}$. If $B$ is arid of rank $\leq r$, then so is $v^{-1}B$. 
	
\begin{claim*}
	Let $B \subset \Sigma_k^*$ be arid of rank $r$, and let $x_1,x_2,y \in \Sigma_k^*$ be such that  $a = \abs{y} = \abs{ x_2}$ and $y \neq x_2$. Then for sufficiently large $m$ (depending on $B,x_1,x_2,y$), $(x_1 y^m x_2)^{-1} B$ is arid of rank $\leq (r-1)$.
\end{claim*}
\begin{proof}
Replacing $B$ with $x_{1}^{-1}B$, we may assume that $x_1 = \epsilon$. 

Let $a=|y|=|x_2|$. In analogy with Remark \ref{kubekkawey}, note that there is a natural way to identify $\Sigma_{k^a}^*$ with a subset of $\Sigma_{k}^*$, and any arid set $B \subset \Sigma_k^*$ is a finite union of translates $B_i v_i$ with $v_i \in \Sigma
_k^*$ of arid sets $B_i \subset \Sigma_{k^a}^*$. Hence, it will suffice to show that if $B \subset \Sigma_{k^a}^*$ is arid of rank $r$, then for sufficiently large $m$, $B \cap y^m x_2 \Sigma_{k}^*$ is arid of rank $\leq (r-1)$. We may now replace $k$ with $k^a$ and assume that $\abs{y} = \abs{x_2} = 1$.

It will suffice to prove the claim for $B$ of the form 
$$B = \set{ v_0 w_1^{l_1} v_1 w_2^{l_2} \cdots w_r^{l_r} v_r }{ l_1, \dots, l_r \in \NN}$$
where $w_i \neq \epsilon$ for all $i$ (note that $l_i$ here are required to be strictly positive; any arid set of rank $r$ is a union of such sets and an arid set of rank $\leq (r-1)$). Now, if $m > \abs{v_0 w_1}$ then either $B \cap y^m x_2\Sigma_{k}^* = \emptyset$ (in which case we are trivially done) or $B \cap y^m x_2\Sigma_{k}^* \neq \emptyset$ and both $v_0$ and $w_1$ is a power of $y$. In the latter case, we further conclude that $x_2$ appears in $v_1w_2$ (else $B$ would have rank $\leq (r-1)$), which is necessarily of the form $y^{b}x_2 v_1'$ with $b \in \NN_0$. Hence 
$$(y^{m} x_2)^{-1}B =  \set{ v_1' w_2^{l_2-1} v_2 w_3^{l_3}\cdots w_r^{l_r} v_r }{ l_2, \dots, l_r \in \NN}$$
is arid of rank $\leq (r-1)$. 
\end{proof}

The proof of the proposition is now a simple induction on the rank $r$ of $A$. Since $A$ is infinite, we have $r \geq 1$. 

If $r = 1$, then $A$ takes the form $\bigcup_{i=1}^r \set{ v_i w^l_i u_i }{l \in \NN_0}$, where $w_i \neq \epsilon$ for at least one $i$, say $i = 1$. Then $A \cap v \Sigma_k^*$ takes the required form for $v = v_1 w_1^m$ for $m$ large enough.

If $r > 1$, then we may find a rank $2$ basic arid set $$B=\set{ v_0 w_1^{l_1} v_1 w_2^{l_2} v_2 }{ l_1,l_2 \in \NN_0}$$ contained in $A$. Without loss of generality, we may assume that $|w_1|=|w_2|>|v_1|$. Apply the above Claim with $x_1 = v_0$, $y = w_1^{l_1}$ and $x_2$ equal to the first $\abs{y}$ symbols of $v_1 w_2^{l_2}$, where $l_2\geq l_1\geq 2$. Note that $y\neq x_2$, because otherwise by an elementary computation one could show that the rank of $B$ is $1$. Then for $m$ large enough $A' = (x_1 y^m x_2)^{-1} A$ is arid of rank $\leq (r-1)$ and infinite. By the inductive assumption, there exists $v' \in \Sigma_k^*$ such that $A'\cap v'\Sigma_k^*$ takes the required form. It remains to take $v = x_1 y^m x_2 v'$.\end{proof}

\begin{corollary}\label{jeszczejednoespresso} Let $E$ be an infinite $k$-arid set. Then there exist integers $n\geq1$, $r\geq 0$, $p\geq 1$, and words $v_1,\ldots,v_p,w,u\in \Sigma_k^*$, $w\neq \epsilon$ such that 
$$E \cap (n\Z+r) = \bigcup_{i=1}^p \{ [v_iw^lu]_k  \mid  l\in \NN_0 \}.$$ 
\end{corollary}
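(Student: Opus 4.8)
The plan is to derive this directly from Proposition~\ref{jeszczejednoespresso-A}, by translating its statement about suffixes of words into one about residue classes of integers. Write $E = \set{[a]_k}{a \in A}$ for an arid language $A \subseteq \Sigma_k^*$; since $E$ is infinite, so is $A$. Applying the second assertion of Proposition~\ref{jeszczejednoespresso-A}, I would fix a word $\tilde u$ together with $\tilde w \neq \epsilon$ and $\tilde v_1, \dots, \tilde v_{\tilde p}$ with
$$A \cap \Sigma_k^* \tilde u = \bigcup_{i=1}^{\tilde p} \set{\tilde v_i \tilde w^l \tilde u}{l \in \NN_0}.$$
The elementary dictionary I would use is: for $t = \abs{\tilde u}$ and $\rho = [\tilde u]_k$, a word $a$ with $\abs{a} \geq t$ ends in $\tilde u$ exactly when $[a]_k \equiv \rho \pmod{k^t}$ (as $\tilde u$ is the unique length-$t$ word representing $\rho$). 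It follows that $E \cap (k^t\ZZ + \rho)$ agrees with $\set{[a]_k}{a \in A \cap \Sigma_k^* \tilde u} = \bigcup_i \set{[\tilde v_i \tilde w^l \tilde u]_k}{l \in \NN_0}$ except possibly for the single integer $\rho$ — which is a genuine discrepancy only if $\rho$ has a representation in $A$ of length $<t$, and this in turn forces $\tilde u$ to begin with the digit $0$.

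The first main step is to remove this discrepancy by passing to a suffix beginning with a nonzero digit, under the assumption that $\tilde w$ contains a nonzero digit. Writing $\tilde w = \tilde w_a \tilde w_b$ with $\tilde w_a$ the (possibly empty) leading run of zeros and $\tilde w_b \neq \epsilon$ starting with a nonzero digit, I would set $\bar w := \tilde w_b \tilde w_a$ and $\bar u := \tilde w_b \tilde u$, so $\bar w \neq \epsilon$ and $[\bar u]_k \geq k^{\abs{\bar u}-1}$. Since no integer with fewer than $\abs{\bar u}$ base-$k$ digits lies in $k^{\abs{\bar u}}\ZZ + [\bar u]_k$, there is no discrepancy and $E \cap (k^{\abs{\bar u}}\ZZ + [\bar u]_k) = \set{[a]_k}{a \in A \cap \Sigma_k^* \bar u}$. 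The next step is purely combinatorial: a word ending in $\bar u$ also ends in $\tilde u$, so $A \cap \Sigma_k^* \bar u$ is precisely the words $\tilde v_i \tilde w^l \tilde u$ with $l \geq 1$ (for all $i$) together with the words $\tilde v_i \tilde u$ for $i$ in $I := \set{i}{\tilde v_i \text{ ends in } \tilde w_b}$. Using $\tilde w^l \tilde u = \tilde w_a (\bar w)^{l-1} \bar u$ for $l\geq 1$, the first family is $\bigcup_i \set{\tilde v_i \tilde w_a (\bar w)^m \bar u}{m \geq 0}$; and for $i \in I$, writing $\tilde v_i = \tilde v_i' \tilde w_b$ gives $\tilde v_i \tilde w_a = \tilde v_i' \bar w$, so that the identity $\set{\tilde v_i' \bar w (\bar w)^m \bar u}{m \geq 0} \cup \{\tilde v_i' \bar u\} = \set{\tilde v_i'(\bar w)^m \bar u}{m \geq 0}$ reabsorbs the boundary word $\tilde v_i \tilde u = \tilde v_i' \bar u$. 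This exhibits $E \cap (k^{\abs{\bar u}}\ZZ + [\bar u]_k)$ as a union of $\tilde p$ rank-$1$ basic arid sets of integers with common $w = \bar w$ and $u = \bar u$, which is the required form.

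The remaining case, $\tilde w = 0^c$ with $c \geq 1$, I would treat directly with $n = k^t$ and $r = \rho$: each $\set{[\tilde v_i (0^c)^l \tilde u]_k}{l \geq 0}$ is already a rank-$1$ basic arid set of integers with $w = 0^c$ and $u = \tilde u$ (a single point when $[\tilde v_i]_k = 0$), and if the discrepancy point $\rho$ occurs it equals $\set{[(0^c)^l \tilde u]_k}{l \geq 0}$, again of this form with empty prefix, so it is simply adjoined; since $\tilde p \geq 1$ we get $p \geq 1$ in all cases. I expect the main obstacle to be the bookkeeping of the middle paragraph — verifying the exact description of $A \cap \Sigma_k^* \bar u$ and checking that the cyclic-conjugation identity genuinely reabsorbs the finitely many $l=0$ boundary words, so that no inadmissible finite remainder is left over (a finite set of integers is not a union of rank-$1$ basic arid sets unless $w$ may be taken to be a power of $0$, which is exactly what makes the degenerate case work).
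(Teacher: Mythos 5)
Your proof is correct and takes essentially the same route as the paper, whose entire argument is the one-line remark that the corollary ``follows immediately from the second part of Proposition~\ref{jeszczejednoespresso-A}'' via the dictionary between suffixes and residue classes modulo $k^{|\tilde u|}$. The extra care you take with the edge cases --- the possible discrepancy point $\rho$ arising from short representations with leading zeros, handled by cyclically conjugating $\tilde w$ to make the new suffix start with a nonzero digit --- fills in details the paper treats as immediate.
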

\begin{proof} Follows immediately from the second part of Proposition \ref{jeszczejednoespresso-A}.\end{proof}

\begin{proposition}\label{automverysparse} If the set $\{k^l \mid l\geq 0\}$ is not generalised polynomial, then neither is any infinite $k$-arid set.
\end{proposition}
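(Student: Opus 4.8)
The plan is to contrapose: assume some infinite $k$-arid set $E$ is generalised polynomial, and deduce that $\{k^l : l \geq 0\}$ is generalised polynomial. The family of generalised polynomial sets (as subsets of $\NN_0$) is closed under intersection with arithmetic progressions $n\ZZ + r$ (the characteristic function of such a progression is periodic, hence generalised polynomial, and $\GP$ is closed under multiplication), and also under pulling back along affine maps $n \mapsto an+b$ and along translations. So first I would invoke Corollary \ref{jeszczejednoespresso}: after intersecting $E$ with a suitable residue class $n\ZZ + r$, we get a set of the form $\bigcup_{i=1}^p \{[v_i w^l u]_k : l \in \NN_0\}$, and this set is again generalised polynomial. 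Since an infinite such union must have at least one branch with $w \neq \epsilon$ contributing infinitely many elements, and since a finite union of generalised polynomial sets is generalised polynomial, it would suffice to handle a single basic piece $\{[v w^l u]_k : l \in \NN_0\}$ with $w \neq \epsilon$ — but we cannot simply discard the other branches, so the cleaner route is to massage the whole union.

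The key computation is that $[v w^l u]_k$ is, as a function of $l$, an exponential-polynomial in $k^{l|w|}$: writing $|w| = d$, $|u| = e$, one has
\[
 [v w^l u]_k = [v]_k \cdot k^{ld + e} + [w^l]_k \cdot k^e + [u]_k = A k^{ld} + B,
\]
after absorbing constants, where actually $[w^l]_k = [w]_k \cdot \frac{k^{ld}-1}{k^d - 1}$, so $[v w^l u]_k = \alpha k^{ld} + \beta$ for rationals $\alpha, \beta$ depending on $v,w,u$ (with $\alpha > 0$). Thus each basic branch is an image of the set $\{k^{ld} : l \geq 0\}$ under an affine map $x \mapsto \alpha x + \beta$ with positive rational slope. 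Now $\{k^{ld} : l \geq 0\} = \{k^m : m \geq 0, d \mid m\} = \{k^m : m \geq 0\} \cap (\text{powers of } k^d)$, and more to the point it is the image of $\{k^m : m \geq 0\}$ under $x \mapsto x$ restricted to a multiplicatively-defined subset; but in the other direction, $\{k^m : m\geq 0\} = \bigcup_{j=0}^{d-1} k^j \cdot \{k^{ld} : l \geq 0\}$, so the set of all powers of $k$ is a finite union of affine images (slope $k^j$) of $\{k^{ld} : l\geq 0\}$. The strategy, then, is to run this equivalence: from $E$ generalised polynomial we extract via Corollary \ref{jeszczejednoespresso} a union of affine images of $\{k^{ld} : l \geq 0\}$ that is generalised polynomial; invert the affine maps to conclude $\{k^{ld} : l \geq 0\}$ is generalised polynomial for some $d \geq 1$ (here one must check that the preimage of a generalised polynomial set under $x \mapsto \alpha x + \beta$, intersected appropriately, is generalised polynomial — this follows since $\GP$ is closed under composition with polynomials and under restriction to the progression on which $\alpha x + \beta$ is integral); and finally assemble $\{k^m : m \geq 0\} = \bigcup_{j=0}^{d-1} k^j \cdot \{k^{ld}\}$, which is again generalised polynomial as a finite union of affine images.

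I expect two points to require care. The first, minor one: extracting a single branch. The set $A \cap v\Sigma_k^*$ from Corollary \ref{jeszczejednoespresso} is a union $\bigcup_{i=1}^p \{[v_i w^l u]_k\}$ with a \emph{common} $w$ and $u$ but varying prefixes $v_i$; each $[v_i w^l u]_k = \alpha_i k^{ld} + \beta_i$, so the union lies inside finitely many affine-image copies of $\{k^{ld} : l \geq 0\}$, and I need that at least one copy is "fully hit" (i.e., equals all of $\alpha_i k^{ld} + \beta_i$) rather than a sparse subset — but by construction each branch $i$ \emph{is} the full set $\{\alpha_i k^{ld}+\beta_i : l \geq 0\}$, so this is automatic; then intersecting the generalised polynomial set with a further arithmetic progression that isolates the values of one branch (possible since distinct branches have eventually disjoint and arithmetically-separable value sets, or simply by subtracting off the finitely many other branches, which are generalised polynomial) gives a single affine image. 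The second and genuinely delicate point is the closure of $\GP$ under the relevant operations — pullback along affine maps with rational (not integer) coefficients — which forces one to work on arithmetic progressions where everything becomes integral, and to track that $\GP$-membership as a subset of $\NN_0$ (as opposed to $\ZZ$) is what is needed throughout; I would state a short lemma up front recording exactly which closure properties of generalised polynomial subsets of $\NN_0$ are used (closure under finite unions, intersections, intersection with residue classes, and image/preimage under $n \mapsto an+b$), and then the argument above is essentially bookkeeping.
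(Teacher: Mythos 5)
Your overall architecture matches the paper's: contrapose, invoke Corollary~\ref{jeszczejednoespresso} to reduce to $\bigcup_{i=1}^p\{[v_iw^lu]_k\}$, observe that $[v_iw^lu]_k=\alpha_i k^{tl}+\beta$ with $\alpha_i,\beta$ rational and $\beta$ independent of $i$, pull back along the rational affine map (which the paper makes legitimate by first replacing the witnessing generalised polynomial $g$ with $g(n)^2+\fpa{n}^2$ so that it has no zeros in $\RR_{>0}\setminus\NN$), and reassemble $\{k^m\}=\bigcup_{j}k^jP_t$ at the end. The closure properties you flag as needing a lemma are exactly the ones the paper uses, and that part is fine.

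The genuine gap is the step where you ``isolate a single branch.'' Both justifications you offer fail. Subtracting off the other branches is circular: the individual branches $\{\alpha_ik^{tl}+\beta\}$ are not known to be generalised polynomial --- that is precisely what is being proved --- so you cannot remove them from the union by set difference within the class. And the claim that distinct branches are ``arithmetically-separable'' by a congruence is neither proved nor obviously true for arbitrary coefficients $\alpha_i$; after pulling back, the branches are $\{b_ik^{tl}\}$ for integers $b_i$, and a congruence class can in general meet several branches in infinite sets. The paper never isolates a branch. Instead it normalises first: divide through by $b_1$ to force $c_1=1$, then replace $k$ by a power so that every $c_i$ satisfies $1\le c_i<k^2$, and only then intersect with $\{n\equiv 1\pmod{k^2-1}\}$. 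Since $k^l\bmod(k^2-1)$ alternates between $1$ and $k$, the constraint $c_ik^l\equiv 1$ together with $1\le c_i\le k^2-1$ forces $c_i\in\{1,k\}$, and \emph{both} of these branches, restricted to the congruence class, give the same set $\{k^{2l}:l\ge0\}$. So the surviving branches collapse to one set rather than being separated; the normalisation $c_i<k^2$ is what makes the congruence argument conclusive. Without this (or some substitute) your argument does not go through, because you are left with a finite union of affine images of $\{k^{tl}\}$ from which no single copy has been extracted.
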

\begin{proof} Assume we know that $P = \{k^l \mid l\geq 0\}$ is not generalised polynomial. Then neither is any set of the form $P_t=\{k^{tl} \mid l\geq 0\}$ for $t\geq 1$ since $P = \bigcup_{j=0}^{t-1}k^j P_t$.

Suppose that there exists an infinite $k$-arid set which is generalised polynomial. Since the class of generalised polynomial sets contains all arithmetic progressions and is closed under finite intersections, Corollary \ref{jeszczejednoespresso} allows us to assume that
$$E = \bigcup_{i=1}^p \{[v_iw^lu]_k \mid l\geq 0\}$$
 for some $p\geq 1, v_1,\ldots,v_p,w,u\in \Sigma_k^*$, $w\neq \epsilon$. Let $s=|u|, t=|w|$ and note that $$[v_iw^lu]_k=[u]_k+k^s[w]_k \frac{k^{tl}-1}{k^t-1}+[v_i]_kk^{tl+s}.$$ 
 
\renewcommand{\p}{g}
\newcommand{\q}{h}
 Let $\p$ be a generalised polynomial such that $E=\set{n\in \N_0 }{ \p(n)=0 }$ and assume further that $\p$ is a restriction of a generalised polynomial of a real variable that has no further zeros in $\R_{> 0} \setminus \N$. (To this end, replace $\p(n)$ by $\p(n)^2+\fpa{n}^2$.)  Then an easy computation shows that the polynomial 
 $$\q(n)=\p \bra{ k^s \frac{ n-[w]_k }{ k^t-1}+[u]_k } $$ 
 has as its zero set 
 $$B=\{n\in \N \mid \q(n)=0\}=\bigcup_{i=1}^p \{b_i k^{tl}\mid l\geq 0\}$$
where $b_i = [w]_k+(k^t-1)[v_i]_k$, $i=1,\ldots,p$.

The set $C=\{n\in \N_0\mid b_1 n \in B\}$ is also generalised polynomial and it has the form 
	$$C=\bigcup_{i=1}^{p} \{c_i k^{tl}\mid l\geq 0\}$$
with $c_1=1$ and $c_i=b_ik^{tl_i}/b_1$, where $l_i\geq 0$ is the smallest integer such that $b_1$ divides $b_i k^{tl_i}$. (If there is no such integer, the corresponding term is not present.)

Let $m\geq 1$ be such that $c_i <k^{tm}$ for $i=1,\ldots,p$. Replacing the set $\{c_i k^{tl}\mid l\geq 0\}$ by the union
	$$\{c_i k^{tl}\mid l\geq 0\}=\bigcup_{j=0}^{m-1} \{c_i k^{tj} k^{mtl}\mid l\geq 0\}$$
and replacing $k$ by $k^{mt}$, we may assume that $$C=\bigcup_{i=1}^{p} \{c_i k^{l}\mid l\geq 0\}$$ with $c_1=1$ and $1\leq c_i<k^2$.

Consider the set $D=\{n\in C \mid n\equiv 1 \pmod{k^2-1}\}$. The set $D$ is generalised polynomial and an integer $c_i k^l \in C$ can be an element of $D$ only if $c_i \equiv 1 \pmod{k^2-1}$ or $c_i \equiv k  \pmod{k^2-1}$. Since $1\leq c_i \leq k^2-1$, this gives $c_i=1$ or $c_i=k$ and whether the latter possibility is realised or not, we have $D=\{k^{2l} \mid l\geq 0\}$. This is a contradiction with our remark that no set of the form $P_t=\{k^{tl} \mid l\geq 0\}$, $t\geq 1$, is generalised polynomial (note that during the proof we have replaced $k$ by its power).
\end{proof}

We are now ready to finish the proof of Theorem  \ref{thm:main-dichotomy}.
\begin{proof}[Proof of Theorem \ref{thm:main-dichotomy}] The two statements in Theorem  \ref{thm:main-dichotomy} are of course mutually exclusive. Now assume that there exists a sequence $(a_n)$ which is $k$-automatic, generalised polynomial, and not ultimately periodic. By Theorem \ref{thm:main-optimized}, it nevertheless coincides with a periodic sequence $(b_n)$ except at a set of density zero. Consider the set $C=\{n\in \N_0 \mid a_n \neq b_n\}$. This set is $k$-automatic, generalised polynomial, sparse, and infinite. By Theorem \ref{thm:main-B2}, $C$ is then arid and hence by Proposition \ref{automverysparse} the set $\{k^l \mid l\geq 0\}$ is generalised polynomial as well.
\end{proof}

\section{Concluding remarks}\label{sec:Concluding}

In this section, we gather some remarks and questions which arise naturally. The question with which we begin was already alluded to in the introduction and in \cite{ByszewskiKonieczny2016}. As previously discussed, its resolution would suffice to decide if Conjecture \ref{conjecture:main} is true.
\begin{question}
	Let $k\geq 2$ be an integer. Is the set $\set{k^i}{i \geq 0}$ generalised polynomial?
\end{question}
We find this question exceptionally pertinent because of its simple formulation.

\subsection*{Morphic words}

The class of morphic words is a natural extension of the class of automatic sequences. Let $\Omega$ be a finite set. Any morphism $\varphi$ of the monoid $\Omega^*$ extends naturally to $\Omega^{\NN_0}$. A word $w \in \Omega^{\NN_0}$ (which we identify with a function $\NN_0 \to \Omega$) is a \emph{pure morphic word} if it is a fixed point of a non-trivial morphism of $\Omega^*$. A morphic word is the image $\pi \circ w \colon \NN_0 \to \Omega'$ of a pure morphic word $w$ under a coding $\pi \colon \Omega \to \Omega'$ (i.e., any set-theoretic map, not necessarily injective). Morphic words are connected with automatic sequences via the fact that $k$-automatic sequences are precisely the morphic words coming from $k$-uniform morphisms. Here, a morphism $\varphi\colon \Omega^* \to \Omega^*$ is $k$-uniform if $\abs{\varphi(u)} = k$ for all $u \in \Omega$.

We have already encountered possibly the most famous example of a non-uniform morphic word, the Fibonacci word. Recall from the introduction that the Fibonacci word $w_{\mathrm{Fib}}$ was defined as the limit of the words $w_0 := 0$, $w_1 := 01$, and $w_{i+2} := w_{i+1} w_{i}$. Directly from this definition, it is easy to see that $w_{\mathrm{Fib}}$ is fixed by the morphism $\varphi \colon \Omega^{\NN_0} \to \Omega^{\NN_0}$ given by $\varphi(0) = 01$ and $\varphi(1) = 0$.

Recall also that $w_{\mathrm{Fib}}$ is a Sturmian word. Here, a \emph{Sturmian word} is one of the form $f(n) = \floor{ \alpha (n+1) + \rho } - \floor{ \alpha n + \rho } - \floor{\alpha}$, where $\alpha,\rho \in \RR$ and $\alpha \not \in \QQ$ (for $w_{\mathrm{Fib}}$ we may take $\alpha = \rho=2- \varphi$). Some (but not all) of these sequences give rise to morphic words; see \cite{BerstelSeebold1993} for details (cf.\ also \cite{Yasutomi97, Fagnot06, BEIR07}).

In analogy with Conjecture \ref{conjecture:main}, one could ask about a classification of all morphic words which are given by generalised polynomials. We believe that examples such as the Fibonacci word are essentially the only possible ones. 

\begin{question}
	Assume that a sequence $f \colon \NN_0 \to \Omega \subset \RR$ is both a morphic word and a generalised polynomial. Is it true that $f$ is a linear combination of a number of Sturmian morphic words and an eventually periodic sequence?
\end{question}

\subsection*{Regular sequences}
We finish by presenting a generalisation of Conjecture \ref{conjecture:main} to regular sequences. We call a function $f \colon \N_0 \to \Z$ a quasi-polynomial if there exists an integer $m\geq 1$ such that the sequences $f_j$ given by $f_j(n)=f(mn+j)$, $0\leq j \leq m-1$, are polynomials in $n$. We say that a function $f \colon \N_0 \to \Z$ is ultimately a quasi-polynomial if it coincides with a quasi-polynomial except on a finite set.

\begin{question}\label{Q:mainregular}
	Assume that a sequence $f \colon \N_0 \to \Z$ is both regular and generalised polynomial. Is it then true that $f$ is ultimately a quasi-polynomial? 
\end{question}

If $f$ takes only finitely many values, then all the polynomials inducing $f_j$ are necessarily constant, and so in this case the question coincides with Conjecture \ref{conjecture:main}. 
 
\bibliographystyle{alpha}
\bibliography{bibliography}

\end{document}